\newtheorem{de}{Definition}
\newtheorem{pro}{Proposition}
\newtheorem{cor}{Corollary}
\newtheorem{teo}{Theorem}
\newtheorem{rem}{Remark}
\newtheorem{lem}{Lemma}
\newtheorem{exam}{Example}
\newtheorem{alg}{Algorithm}
\newenvironment{exa}{\begin{exam}\rm}{\cqb\end{exam}}
\newcommand{\gp}{\mathbb{P}}
\newcommand{\mult}{\mathrm{mult}}
\renewcommand{\int}{{\rm int}}
\newcommand{\C}{\ensuremath{\mathbb{C}}}
\newcommand{\CP}{\mathbb{C}\mathbb{P}}
\newcommand{\N}{\ensuremath{\mathbb{N}}}
\newcommand{\Q}{\ensuremath{\mathbb{Q}}}
\newcommand{\X}{\ensuremath{\mathbf{X}}}
\newcommand{\la}{\lambda}
\newcommand{\pd}[2]{\frac{\partial #1}{\partial #2}}
\newcommand{\cqb}{\hbox{}\nobreak\hfill$\Box$}
\newcommand{\DX}{{\mathcal D}({\mathcal X})}
\newcommand{\XL}{\mathcal{X}_{\mathcal L}}
\title[]{A class of polynomial planar vector fields with polynomial first integral}
\author{A.~Ferragut, C.~Galindo \and F.~Monserrat}
\address{A. Ferragut and C. Galindo: Institut de Matem\`atiques i Aplicacions de Castell\'o (IMAC) and Departament de Matem\`{a}tiques, Universitat Jaume I, Edifici TI (ESTEC), Av. de Vicent Sos Baynat, s/n, Campus del Riu Sec, 12071 Castell\'{o} de la Plana, Spain} \email{ferragut@uji.es, galindo@uji.es}
\address{F. Monserrat: E.T.S. d'Inform\`atica Aplicada, Universitat Polit\`ecnica de Val\`encia, Cam\'\i\ de Vera, s/n, 46002 Val\`encia, Spain} \email{framonde@mat.upv.es}
\date{}
\thanks{The first author is partially supported by the Spanish Government grant MTM2013-40998-P. The second and third authors are partially supported by the Spanish Ministry of Economy MTM2012-36917-C03-03 and Universitat Jaume I P1-1B2012-04 grants.}
\subjclass[2010]{34A34; 34C05; 34C08; 14C21}
\keywords{planar polynomial vector field, polynomial first integral, reduction of singularities, blow-up, invariant algebraic curve, curve with only one place at infinity}
\begin{document}

\begin{abstract}
We give an algorithm for deciding whether a  planar polynomial differential system has a  first integral which factorizes as a product of defining polynomials of curves with only one place at infinity. In the affirmative case, our algorithm computes a minimal first integral. In addition, we solve the Poincar\'e problem for the class of systems which admit a polynomial first integral as above in the sense that the degree of the minimal first integral can be computed from the reduction of singularities of the corresponding vector field.
\end{abstract}

\maketitle

\section{Introduction}

In this paper we are concerned with planar polynomial differential systems. One of the main open problems in their qualitative theory is to characterize the integrable ones. The importance of the first integral is in its level sets: such a function $H$ whereas it is defined determines the phase portrait of the system, because the level sets $H=h$ give the expression of the solution curves laying on the domain of definition of $H$. Notice that when a differential equation admits a first integral, its study can be reduced in one dimension. In addition, Prelle and Singer \cite{pr-si}, using methods of differential algebra, showed that if a polynomial vector field has an elementary first integral, then it can be computed using Darboux theory of integrability \cite{D}, and Singer \cite{Si} proved  that if it has a Liouvillian first integral, then it has integrating factors given by Darbouxian functions \cite{CL}. Consequently, given a planar differential system, it is important to know whether it has a first integral and compute it if possible. We shall consider complex systems since, even in the real case, invariant curves must be considered over the complex field.

\smallskip

The existence of a rational first integral $H=f/g$ is a very desirable condition for the mentioned systems that guarantees that every invariant curve is algebraic and can be obtained from some equation of type $\lambda f+\mu g=0$, with $(\lambda:\mu)\in\CP^1$, $\CP^1$ being the complex projective line. According to Poincar\'e \cite{poi2}, an element  $(\lambda:\mu)$ is a {\it remarkable} value of $H$ if $\lambda f+\mu g$ is a reducible polynomial in $\C[x,y]$. The curves in its factorization are called {\it remarkable} curves. There are finitely many remarkable values for a given rational first integral $H$ \cite{CGGL2} and the corresponding curves appear to be very important in the phase portrait \cite{FL}. Algebraic integrability has also interest for other reasons. For instance, it is connected with the center problem for quadratic vector fields \cite{sch, cha-lli, lli2, lli} and  with problems related to solutions of Einstein's field equations  in general relativity \cite{hew}.

\smallskip

Prestigious mathematicians as Darboux \cite{dar}, Poincar\'e
\cite{poi1, poi2},  Painlev\'e \cite{pai} and Autonne
\cite{aut} were interested in algebraic integrability. Very interesting problems along this line are the so-called Poincar\'e and Painlev\'e problems. The first one  consists of obtaining an upper bound of the degree $n$ of the first integral depending only on the degree of the polynomial differential system. It is well-known that such a bound does not exist in general \cite{l-n}. However in certain cases a solution is known, for example when the singularities are non-degenerated \cite{poi2}, when the singularities are of nodal type \cite{ce-li} or when the reduction of the system has only one non-invariant exceptional divisor \cite{g-m-4}. Sometimes the problem is stated as bounding the degree $n$ from the knowledge of the system and not only from its  degree. Many other related results are known (including higher dimension)  \cite{car,ca-ca,zam1,soa1,soa2,zam2,pere,es-kl,g-m-1,c-l,g-m-2}. Painlev\'e question, posed in \cite{pai},  asks for recognizing the genus of the general solution of a system as above. Again \cite{l-n} gives a negative answer but, in certain cases and mixing the ideas of Poincar\'e and Painlev\'e, the degree of the first integral can be bounded by using the mentioned genus \cite{g-m-4}.


\smallskip

Darboux gave a lower bound on the number of invariant  integral
algebraic curves of a system as above that ensures the existence of a first integral. A close
result was proved by Jouanolou \cite{jou, CL2} to guarantee that
the system  has a rational first integral and that if
one has enough reduced invariant curves, then the rational first
integral can be computed (see Theorem \ref{tDar}). Furthermore \cite{g-m-1} provides an algorithm to
decide about the existence of  a rational first integral (and to
compute it in the affirmative case) assuming that one has a
well-suited set of $k$ reduced invariant curves, where $k$ is the number of dicritical divisors appearing in the reduction  of the vector field \cite{seid}. Similar results to the above mentioned have been adapted and extended for vector fields in other varieties \cite{jou2,jou, b-n,ghy,cor}.

\smallskip

As a particular case of algebraically integrable systems, one can consider those admitting a polynomial first integral. To the best of our knowledge, there is no characterization for these systems. In this paper, we shall consider the subfamily $\mathfrak{F}$, formed by planar polynomial differential systems with a polynomial first integral which factorizes as a product of curves with only one place at infinity. These curves are a wide class of plane curves characterized by the fact that they meet a certain line (the line at infinity) in a unique point where the curve is reduced and unibranch. They have been rather studied, being \cite{pa1,pa2,am} the most classical papers,  present interesting properties and have been used recently in different contexts \cite{c-p-r-1, c-p-r-2,F-J-ein,F-J-ann,g-m-3}.

\smallskip

We consider the reduction of singularities \cite{seid} of the projective vector field attached to a planar polynomial differential system. This reduction is obtained after finitely many point blowing-ups of the successively obtained vector fields and determines a configuration of infinitely near points of the complex projective plane. Our paper contains two main results. The first one is Corollary \ref{poin}, where we solve the Poincar\'e problem for the polynomial differential systems of the family $\mathfrak{F}$ in the sense that the degree $n$ of the polynomial first integral of a system in $\mathfrak{F}$ can be {\it computed} from its reduction of singularities. In fact, we do not need the complete configuration of infinitely near points as can be seen in the statement. Moreover, $n$ can be {\it bounded} only from the structure (proximity graph) of this reduction. The second main result is an algorithm that decides whether a planar polynomial differential system  belongs to the family $\mathfrak{F}$ and, in the affirmative case, provides a minimal polynomial first integral. We name these first integrals well-behaved at infinity (WAI). The reduction process and certain linear systems related with the above mentioned configuration are our main tools. It is worthwhile to add that our algorithm only performs simple linear algebra computations once the reduction is obtained. The algorithm obtains firstly the irreducible factors of the polynomial first integral and, afterwards, determines the exponents for them. We show two different ways of performing this last step which give rise to what we call Algorithm \ref{alg1} and Algorithm \ref{alg2}.

\smallskip

Our supporting language comes from the algebraic geometry but non-linear ordinary differential equations have interest in practically every science, therefore we feel that it is worthwhile to simplify it as much as possible and provide easy-to-understand explanations for our above mentioned tools. So, Sections \ref{S.settings}, \ref{reduct} and \ref{linsy} are  devoted to provide the reader with information and worked examples on projective vector fields, its reduction procedure and linear systems. This material is not new but we think that, as presented below, it can be read by a wide audience and  will make easy to understand our last section, where our main results are proved.

\smallskip

Section \ref{S:intro} supplies some preliminaries where we define some concepts we shall need, such as  first integral, curve with only one place at infinity, WAI polynomial first integral or projective vector field. Section \ref{sec6} is devoted to explain the intimate relation between planar differential systems which admit a rational first integral and the pencil of curves that this first integral defines. The information we give can be completed in \cite{julio} and is essential for  our main section which is Section \ref{sec7}. Here we state an prove our main theorem, Theorem \ref{char}, whose proof is supported in several previous results given in that section and provides a number of properties that must satisfy a differential system laying in the family $\mathfrak{F}$. These properties are determined by the reduction of singularities of the system and justify Corollary \ref{poin} and Algorithm \ref{alg1}. We conclude by noting that Algorithm \ref{alg2} shows that the before alluded classical results by Darboux and Jouanolou help us to decide about algebraic integrability avoiding the use of some properties of $\mathfrak{F}$. An illustrative example, complementing the mentioned algorithms, is also given at the end of this last section.

\section{WAI polynomial first integrals of planar polynomial vector fields}\label{S:intro}

Along this paper, $\X$ will be the complex planar polynomial differential system given by
\begin{equation}\label{e1}
\dot x=p(x,y),\quad \dot y=q(x,y),
\end{equation}
where $p,q\in\C[x,y]$, $\C$ being the complex field. Let $d=\max\{\deg p,\deg q\}$ be the {\it degree} of  the  system $\X$. We shall also use $\X$  to denote the vector field $\X=p\pd{}x+q\pd{}y$.


A non-constant $\mathcal C^1$-function $H = H(x,y)$ is  a {\it first integral} of $\X$ if $H$ is constant on the solutions of the system. That is, if it satisfies the equation
\[
\X H=p\pd Hx+q\pd Hy=0,
\]
whereas $H$ is defined.

An {\it invariant algebraic curve} of $\X$ is an algebraic curve $C_f$, with local equation $f=0$, $f\in\C[x,y]$, such that
\[
\X f=p\pd fx+q\pd fy=kf,
\]
where $k\in\C[x,y]$. The polynomial $k$ is the {\it cofactor} of $C_f$. It has degree at most $d-1$.


Consider the complex projective plane $\mathbb{C}\mathbb{P}^2$ and homogeneous coordinates $(X:Y:Z)$. Set $L:\{Z=0\}$ the line at infinity. We say that an algebraic curve $C:\{F=0\}$, with $F\in\C[X,Y,Z]$ homogeneous, has {\it only one place at infinity} if $C\cap L$ is a unique point $P$ and $C$ is reduced and unibranch (i.e., analytically irreducible) at $P$. It is easy to find examples of this type of curves and global information for them  can be obtained from local information around its singularity \cite{pa1,pa2,am}.


%
%

In this paper we denote by $\N$ the set of natural numbers $1,2,3,\ldots$. A polynomial function $H(x,y)$ of degree $n\in \N$ is named to be {\it well-behaved at infinity} (WAI for short) if it can be written as
\begin{equation}\label{HPFI}
H=\prod_{i=1}^rf_i^{n_i},
\end{equation}
where $r, n_i\in\N$  and $f_i$ are polynomials in $\C[x,y]$ of degree $d_i\in\N$ such that each curve given by the projectivization $F_i(X,Y,Z)=Z^{d_i}f_i(X/Z,Y/Z)$ of $f_i$ has only one place at infinity.


We shall mainly use the projective version of the system $\X$ into $\CP^2$, thus we shall work with homogeneous coordinates $X,Y,Z$. The vector field $\X$ in these coordinates reads as
\begin{equation} \label{Xproj}
\mathcal X = P \pd{}X + Q \pd{}Y,
\end{equation}
where $P(X,Y,Z)=Z^dp(X/Z,Y/Z)$ and $Q(X,Y,Z)=Z^dq(X/Z,Y/Z)$ are the respective projectivizations of $p$ and $q$. After embedding $\X$ into $\CP^2$, \eqref{HPFI} becomes
\[
\bar H(X,Y,Z)=H(X/Z,Y/Z)=\frac{\prod_{i=1}^rF_i(X,Y,Z)^{n_i}}{Z^n},
\]
where, for each $i$, $F_i(X,Y,Z)$ stands for the projectivization of $f_i$. The main aim of this work is to provide computable steps for discerning whether the system $\X$ has a (minimal) WAI polynomial first integral or not. In the affirmative case, our computations allow us to obtain the mentioned first integral. We recall that a polynomial first integral $H$ of $\X$ is {\it minimal} whenever  any other polynomial first integral has degree at least the degree of $H$.


Later on we shall deal  with singular points of the embedding of our vector field $\X$ into $\CP^2$ and the so-called reduction of its singularities. These concepts are summarized in the following two sections.

\section{Polynomial vector fields in $\CP^2$}\label{S.settings}

Let $A$, $B$, and $C$ be homogeneous polynomials of degree $d+1$ in the complex variables $X$, $Y$, and $Z$. We say that the homogeneous $1$-form
\[
\Omega=AdX + BdY + CdZ
\]
of degree $d+1$ is {\it projective} if $XA+YB+ZC=0$. That is, if there exist three homogeneous polynomials $P$, $Q$, and $R$ of degree $d$ such that
\[
A=ZQ-YR, \; \; B=XR-ZP, \; \; C=YP-XQ.
\]
Then we can write
\begin{equation}\label{omega}
\Omega = P(YdZ - ZdY) + Q(ZdX - XdZ) + R( XdY - YdX).
\end{equation}
Usually in the literature  $\Omega$ is called a Pfaff algebraic form of $\CP^2$; see \cite{jou} for more details. The triple $(P, Q, R)$ can be thought of as a homogeneous polynomial vector field in $\CP^2$ of degree $d$, more specifically
\[
\mathcal X=P\pd{}X +Q\pd{}Y +R\pd{}Z ,
\]
where $X$, $Y$ and $Z$ denote homogeneous coordinates of $\CP^2$.


Let $F\in\C[X,Y, Z]$ be a homogeneous polynomial. The curve  $F=0$ in $\CP^2$  is {\it invariant} under the flow of the vector field $\mathcal X$ if
\begin{equation}\label{XF}
\mathcal XF =P\pd{F}X +Q\pd{F}Y +R\pd{F}Z= KF,
\end{equation}
for some homogeneous polynomial $K \in C[ X, Y, Z]$ of degree $d - 1$, called the {\it cofactor} of $F$.


The singular points of a projective $1$-form $\Omega$ of degree $d+ 1$ or of its associated homogeneous polynomial vector field $\mathcal X$ of degree $d$ are those points satisfying the following system of equations:
\begin{equation}\label{SP}
ZQ-YR=0,\quad XR-ZP=0,\quad YP-XQ=0.
\end{equation}

We devote the remaining of this section to relate affine and projective vector fields. The polynomial differential system \eqref{e1} of degree $d$ is equivalent to the $1$-form
\[
p(x, y) dy-q(x, y) dx,
\]
which can be extended to $\CP^2$ as the projective $1$-form of degree $d+ 1$
\begin{equation}\label{Eq.1form}
Z^{d+2}\left(p\left(\frac XZ,\frac YZ\right)\frac{YdZ-ZdY}{Z^2}-q\left(\frac XZ,\frac YZ\right)\frac{XdZ-ZdX}{Z^2}\right),
\end{equation}
where we have replaced $(x,y)$ by $(X/Z,Y/Z)$. We define $P(X,Y,Z)=Z^dp(X/Z,Y/Z)$ and $Q(X,Y,Z)=Z^dq(X/Z,Y/Z)$. Then \eqref{Eq.1form} becomes
\[
P ( X , Y , Z )( YdZ - ZdY )+ Q ( X , Y , Z) ( ZdX - XdZ ).
\]

In short, the vector field attached to the polynomial differential system \eqref{e1} is extended to  the homogeneous polynomial vector field of degree $d$ in $\CP^2$ $\mathcal X = P \pd{}X + Q \pd{}Y$. This vector field is called the {\it complex projectivization} of  System \eqref{e1} or of the vector field $\X$.


We notice that the third component $R$ in the complex projectivization is identically zero. Consequently the line at infinity $Z = 0$ is a solution of the projective vector field.


From the equalities in \eqref{SP}, we note that the singular points of the complex projectivization of  System \eqref{e1} must satisfy the following equations
\[
ZQ( X, Y, Z) = 0,\quad ZP( X, Y, Z) = 0, \quad YP( X, Y, Z) - XQ( X, Y, Z) = 0. \]
The third equation and the line $Z=0$ determine the singular points at infinity. Setting $Z=1$, the singular points which are not at infinity are obtained from the equality $P=Q=0$.


If $f( x, y) = 0$ is the local equation of an invariant algebraic curve of degree $n\in\N$ of  System \eqref{e1} with cofactor $k( x, y)$, then $F( X, Y, Z) = Z^nf(X/Z,Y/Z)=0$ is an invariant algebraic curve of the vector field  in \eqref{Xproj} with cofactor $K(X,Y, Z)=Z^{d-1}k(X/Z,Y/Z)$.


To end this section we show the behavior of $\mathcal X$ and $K$ when we take local coordinates in the local chart determined by $Z = 1$. The same procedure can be done for $X = 1$ and $Y = 1$. 
Let $F = 0$ be an invariant algebraic curve of degree $n$ of the vector field defined by \eqref{omega} with cofactor $K$. Applying Euler's Theorem for homogeneous functions  and regarding \eqref{XF}, we can prove that $f(x,y)=F (X, Y, 1) = 0$ is an equation of an invariant algebraic curve of the restriction of $\Omega$ to the affine plane:
\[
\left(P(x,y,1)-xR(x,y,1)\right)dy - \left(Q(x,y,1)-yR(x,y,1)\right)dx.
\]
We notice that this $1$-form has degree $d+1$ and the cofactor of $f(x,y)=0$ is $k(x,y) = K(x,y,1)-nR(x,y,1)$. It has degree at most $d$ whenever $Z = 0$ is not invariant. We notice that the line $Z=0$ is invariant if and only if $Z|R$.

\section{Reduction of singularities} \label{reduct}

The main technique to perform the desingularization or the reduction of singular points (of curves or planar vector fields) is the blowing-up (see \cite{C,seid,D,AFJ}). The reduction theorem for planar vector fields was proved by Seidenberg \cite{seid}. Roughly speaking, the blow-up technique transforms, through a change of variables that is not a diffeomorphism, a singularity into a line. Then, for studying the original singular point, one considers the new singular points that appear on this line and that will be, probably, simpler. If some of these new singular points is degenerate, the process is repeated. This iterative process of reduction of singularities is finite. Let us describe it.

\subsection{The blow-up technique}\label{blowup}

Let $M$ be a complex manifold of dimension two. Blowing-up a point $P$ in the manifold $M$ consists on replacing $P$ by a projective line $\mathbb{CP}^1$ considered as the set of limit directions at $P$. Let $T_PM$ be the tangent space of $M$ at $P$ and  $E_P$  the complex projective line given by the projectivization of $T_PM$ with quotient map $[\;]:T_PM\setminus \{0\}\rightarrow E_P$. The {\it blown-up manifold}, denoted by $\mathrm{Bl}_P(M)$, is the set $(M\setminus \{P\})\cup E_P$ endowed with structure of complex manifold of dimension 2 obtained as follows: for each local chart of $M$ at $P$, $(U,\varphi)$, $\varphi=(x,y):U\rightarrow \mathbb{C}^2$, such that $\varphi(P)=(x(P),y(P))=0$,  the pairs $(U_i,\varphi_i)$, $i=1, 2$, will be two local charts of $\mathrm{Bl}_P(M)$ defined as $\varphi_i:V_i^P\rightarrow \mathbb{C}^2$, with
\[
\begin{split}
V_1^P=&(U\setminus x^{-1}(0))\cup (E_P\setminus Ker\;(dx)_P),\\
V_2^P=&(U\setminus y^{-1}(0))\cup (E_P\setminus Ker\;(dy)_P),
\end{split}
\]
and
\[
\begin{split}
\varphi_1=&\left(x,\frac{y}{x}\right)\; \mbox{ in }U\setminus x^{-1}(0) \;\; \mathrm{and} \; \quad \varphi_1\left(\left[\alpha \frac{\partial}{\partial x}+\beta \frac{\partial}{\partial y}\right]\right)=\left(0,\frac{\beta}{\alpha}\right) \; \mathrm{otherwise},\\
\varphi_2=&\left(\frac{x}{y},y\right)\; \mbox{ in } U\setminus y^{-1}(0) \;\; \mathrm{and} \;\quad \varphi_2\left(\left[\alpha \frac{\partial}{\partial x}+\beta \frac{\partial}{\partial y}\right]\right)=\left(\frac{\alpha}{\beta},0\right) \; \mathrm{otherwise}.
\end{split}
\]

The projection map $\pi_P:\mathrm{Bl}_p(M)\rightarrow M$, usually named blow-up of $P$ in $M$, is defined in local coordinates in the following form. If $(x,t=y/x)$ (respectively, $(s=x/y,y)$) are the local coordinates in $V_1^P$ (respectively, $V_2^P$), then $\pi_P(x,t)=(x,xt)$ (respectively, $\pi_P(s,y)=(sy,y)$). The projective line $E_P$ is the \emph{exceptional divisor} of the blow-up and is defined, as a submanifold of $\mathrm{Bl}_P(M)$, by the local equation $x=0$ (respectively, $y=0$) in the chart $(V_1^P,\varphi_1)$ (respectively, $(V_2^P,\varphi_2)$). The restriction of $\pi_P$ to $\mathrm{Bl}_P(M)\setminus E_P$ is a biholomorphism onto $M\setminus \{P\}$. Moreover the equality $\pi_P^{-1}(P)=E_P$ holds.

\subsection{Reduction of singularities}\label{SS.resolution}

Consider the polynomial vector field in $\mathbb{C}^2$ $\X=p\pd{}x+q\pd{}y$. Suppose that it has an isolated singularity at the origin $O$ and consider its associated differential 1-form $\omega=p(x,y)dy-q(x,y)dx$. Let $\omega_{m}=p_m(x,y)dy-q_m(x,y)dx$ be the first non-zero jet of $\omega$ at $O$, where $p_m(x,y)$ and $q_m(x,y)$ are homogeneous polynomials of degree $m$. The integer number $m$ is called the \emph{multiplicity} of $\X$ at $O$.


Consider the blown-up manifold $\mathrm{Bl}_O(\mathbb{C}^2)$,
the projection $\pi_O:\mathrm{Bl}_0(\mathbb{C}^2)\rightarrow \mathbb{C}^2$ and the charts $(V_i^O,\varphi_i)$, $i=1,2$, defined as before. In the chart $(V_1^O,\varphi_1=(x,t))$, we define the \emph{total transform} by $\pi_O$ of the differential 1-form $\omega$ in $V_1^O$ as
\begin{equation}\label{estri1}
\omega^*|_{V_1^O}:=x^m \left[ (\alpha(1,t)+x\beta(x,t))dx+x(p_m(1,t)+x\gamma(x,t))dt \right],\end{equation}
where \begin{equation}
\label{estr}
\alpha(x,y):=yp_m(x,y)-xq_m(x,y)
\end{equation} is the so-called {\it characteristic polynomial} and
\[
\gamma(x,y):=\frac{1}{x^m}\left[p(x,xy)-p_m(x,xy)\right],\quad \beta(x,y):=y\gamma(x,y)-\frac{1}{x^m}\left[q(x,xy)-q_m(x,xy)\right].
\]
The \emph{total transform} by $\pi_O$ of $\omega$ in $V_2^O$ is defined similarly.

\smallskip

Notice that $\omega^*|_{V_1^O}$ is divisible by $x^{m+1}$ if and only if $\alpha(x,y)\equiv 0$. If this holds, we define the \emph{strict transform} by $\pi_O$ of $\omega$ in $V_1^O$ as
$$\tilde{\omega}|_{V_1^O}:=\frac{\omega^*|_{V_1^O}}{x^{m+1}}=\beta(x,t)dx+\left(p_m(1,t)+x \gamma(x,t)\right) dt.$$
Clearly $p_m(x,y)$ is not identically zero in this case and, therefore, at any point of $E_O\cap V_1^O$ where $\beta(x,t)$ does not vanish, the leaves of $\tilde{\omega}|_{V_1^O}$ are transverse to $E_O$. An analogous situation happens for the chart $V_2^O$.


When $\alpha(x,y)\not\equiv 0$, we define the \emph{strict transform} by $\pi_O$ of $\omega$ in $V^O_1$ as $$\tilde{\omega}|_{V_1^O}:=\left(\alpha(1,t)+x \beta(x,t)\right)dx + x \left(p_m(1,t)+x \gamma(x,t)\right)dt.$$
It is easy to deduce that the singular points of $\tilde{\omega}$ that belong to $E_O$ are isolated and moreover that the local curve given by $E_O$ at $O$ is invariant by the vector field defined by $\tilde{\omega}|_{V_1^O}$. As above, we can define $\tilde{\omega}|_{V_2^O}$ is an analogous way.


The differential 1-forms $\tilde{\omega}|_{V_i^O}$, $i=1,2$, define a holomorphic vector field in  $\mathrm{Bl}_O(\mathbb{C}^2)$ denoted by $\tilde{\omega}$. Furthermore, given a holomorphic vector field $\mathcal X$ in any two-dimensional complex manifold $M$ and given any point $P\in M$, restricting to a local chart and applying the above arguments a holomorphic vector field $\tilde{\mathcal X}$ in $\mathrm{Bl}_P(M)$ is defined; we call it the \emph{strict transform} of $\mathcal X$ by $\pi_P$. The above facts give rise to the following definition, which uses the previous notation.

\begin{de}
{\rm Let $O\in \mathbb{C}^2$ be an isolated singularity of a polynomial vector field $\X=p\pd{}x+q\pd{}y$ in $\mathbb{C}^2$. The point $O$ is called a \emph{dicritical} singularity if the polynomial $\alpha$ in (\ref{estr}) is identically zero.  Moreover, $O$ is called a \emph{simple} singularity whenever $\X$ has multiplicity $1$ at $O$ and the matrix
$$\begin{pmatrix} \pd{p_1}x & \pd{p_1}y\\ \pd{q_1}x & \pd{q_1}y \end{pmatrix} $$
has eigenvalues $\lambda_1,\lambda_2$ satisfying either $\lambda_1\lambda_2\not=0$ and $\frac{\lambda_1}{\lambda_2}\not\in\Q^+$, or $\lambda_1\lambda_2=0$ and $\lambda_1^2+\lambda_2^2\neq0$.
Furthermore, an \emph{ordinary singularity} is a singularity that is not simple. We remark that a dicritical singularity is ordinary. Finally, we say that a holomorphic vector field $\mathcal X$ in a two-dimensional complex manifold $M$ has a {dicritical}  (respectively, simple, ordinary) singularity at $P\in M$ if its restriction to a local chart at $P$ has a dicritical (respectively, simple, ordinary) singularity at the corresponding point in $\mathbb{C}^2$.}
\end{de}

By Equality (\ref{estri1}), the following characterization of non-dicritical singularities holds:

\begin{pro}\label{nondicritical}
A singularity $P$ of a holomorphic vector field $\mathcal X$ in a two-dimensional complex manifold $M$ is non-dicritical if and only if the exceptional divisor of the blown-up manifold ${\rm Bl}_P(M)$ is invariant by the strict transform of $\mathcal X$ in ${\rm Bl}_P(M)$.
\end{pro}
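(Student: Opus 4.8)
The plan is to prove Proposition \ref{nondicritical} by directly reading off the invariance of the exceptional divisor from the explicit local formulas for the strict transform established earlier in Section \ref{reduct}, namely the two cases governed by whether the characteristic polynomial $\alpha$ in \eqref{estr} vanishes identically. Since the statement concerns a holomorphic vector field $\mathcal X$ on an abstract two-dimensional manifold $M$, I would first reduce to the local model: the notions of dicritical singularity and of strict transform are defined by restricting $\mathcal X$ to a local chart at $P$ and applying the construction to the resulting vector field $\X = p\,\partial/\partial x + q\,\partial/\partial y$ on $\mathbb{C}^2$ with an isolated singularity at the origin. Thus it suffices to verify the equivalence for this planar model, working in the two charts $(V_1^O,\varphi_1)$ and $(V_2^O,\varphi_2)$ of $\mathrm{Bl}_O(\mathbb{C}^2)$, where the exceptional divisor $E_O$ is cut out locally by $x=0$ and $y=0$ respectively.

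First I would treat the non-dicritical direction. Suppose $P$ is non-dicritical, so $\alpha \not\equiv 0$. In the chart $V_1^O$ the strict transform is
\[
\tilde{\omega}|_{V_1^O} = \left(\alpha(1,t)+x\beta(x,t)\right)dx + x\left(p_m(1,t)+x\gamma(x,t)\right)dt.
\]
The coefficient of $dt$ is divisible by $x$, which is precisely the condition that the local curve $\{x=0\}$, i.e.\ $E_O$, be invariant: writing the associated vector field and checking that $x$ divides $\tilde\omega(x)$ (the pairing of the form against $\partial/\partial x$-type evaluation), one sees directly that $E_O$ is a leaf. Indeed this is exactly the remark already recorded after the definition of the strict transform, namely that ``the local curve given by $E_O$ at $O$ is invariant.'' The same computation in $V_2^O$ handles the rest of the exceptional divisor, so $E_O$ is invariant under $\tilde{\mathcal X}$.

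For the converse I would argue by contraposition: assume $P$ is dicritical, so $\alpha\equiv 0$, and show $E_O$ is not invariant. By the divisibility remark following \eqref{estri1}, when $\alpha\equiv 0$ the total transform $\omega^*|_{V_1^O}$ is divisible by $x^{m+1}$ and the strict transform becomes
\[
\tilde{\omega}|_{V_1^O} = \beta(x,t)\,dx + \left(p_m(1,t)+x\gamma(x,t)\right)dt.
\]
Here the coefficient of $dt$ restricted to $E_O=\{x=0\}$ is $p_m(1,t)$, which is not identically zero (as noted in the text, $p_m$ cannot vanish identically in the dicritical case). Hence the foliation is generically transverse to $E_O$: at the points of $E_O\cap V_1^O$ where $\beta(x,t)\neq 0$ the leaves cross $E_O$, so $E_O$ is not invariant. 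This gives the equivalence. I expect the only delicate point to be the bookkeeping that translates ``the $dt$-coefficient is divisible by $x$'' into the clean geometric statement ``$E_O$ is invariant as a leaf of the strict-transform foliation,'' together with confirming that the behavior in the single remaining point of $E_O$ not covered by $V_1^O$ is correctly captured by the analogous computation in $V_2^O$; both, however, are already essentially spelled out in the preceding subsection, so the proof is largely a matter of invoking Equality \eqref{estri1} and the two displayed forms for $\tilde\omega$.
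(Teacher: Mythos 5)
Your proposal is correct and follows essentially the same route as the paper, which proves the proposition simply by invoking Equality \eqref{estri1} and the two displayed local expressions for $\tilde\omega|_{V_1^O}$: in the non-dicritical case the $dt$-coefficient is divisible by $x$, making $E_O$ invariant, while in the dicritical case it restricts to $p_m(1,t)\not\equiv 0$ on $E_O$, so the leaves are generically transverse. Your reduction to the local chart and the check in $V_2^O$ are exactly the bookkeeping the paper leaves implicit.
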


Generically speaking, simple singularities $P$ of holomorphic vector fields $\mathcal{X}$ cannot be reduced by blow-ups, that is, the strict transform of $\mathcal X$ in $\mathrm{Bl}_P(M)$, where $P$ is a simple singularity, may have  simple singularities at the points of the exceptional divisor $E_P$. By a classical result of Seidenberg \cite{seid} (see also \cite{Brunella} for a modern treatment) the remaining singularities of such vector fields can be eliminated or reduced to simple ones:

\begin{teo}
Let $\mathcal X$ be a holomorphic vector field in a two-dimensional complex manifold $M$ with isolated singularities. Then there exists a finite sequence of blow-ups such that the strict transform of $\mathcal X$ in the last obtained complex manifold has no ordinary singularities.
\end{teo}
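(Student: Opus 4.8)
The plan is to reduce to a purely local statement and then run an induction on a complexity measure of the singular points, showing that each singularity which is not already simple is strictly simplified by a blow-up. Since $\mathcal X$ has isolated singularities and the blow-up $\pi_P$ is a local modification, I would fix one singular point $P\in M$, pass to a chart identifying it with $O\in\mathbb{C}^2$, and compute the strict transform $\tilde{\mathcal X}$ via the explicit expressions (\ref{estri1}) together with the characteristic polynomial $\alpha$ of (\ref{estr}). The first step is the fundamental control of multiplicity under blow-up: if $m$ is the multiplicity of $\mathcal X$ at $P$, then every singularity $Q$ of $\tilde{\mathcal X}$ on the exceptional divisor $E_P$ has multiplicity $m_Q\le m$. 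I would prove this by the dichotomy of Proposition \ref{nondicritical}. In the non-dicritical case ($\alpha\not\equiv 0$) the divisor $E_P$ is invariant, and setting $x=0$ in the first chart shows that the singular points of $\tilde{\mathcal X}$ on $E_P$ lie among the roots of $\alpha(1,t)$ (plus the point at infinity of the chart, covered by the second chart); since $\deg\alpha=m+1$, their number is finite and a direct order estimate on (\ref{estri1}) bounds each $m_Q$ by $m$. In the dicritical case ($\alpha\equiv 0$) one divides by the extra factor of $x$ and the same inspection gives a strictly sharper bound, reflecting that $E_P$ is now non-invariant.

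The heart of the argument is termination: I must exclude infinite sequences of blow-ups. Multiplicity alone does not suffice because it may stay constant, so I would refine it to a pair $(m,\tau)$ ordered lexicographically, where $\tau$ encodes combinatorial data of the tangency of the leaves to $E_P$ --- concretely the multiplicity of the relevant root of the characteristic polynomial, read off from the Newton polygon of the local $1$-form. The key computation is that along an equimultiple step (where $m$ does not drop) the refined invariant $\tau$ strictly decreases, so that $(m,\tau)$ descends in a well-ordered set and no infinite chain is possible. For $m\ge 2$ this forces $m$ itself to decrease after finitely many blow-ups, driving the process down to multiplicity $m=1$. Because at each stage there are only finitely many singular points and each blow-up over a point $P$ produces finitely many new singularities, each with invariant no larger than that of $P$ and strictly smaller at the critical directions, I would organize the global descent through a well-founded (multiset) ordering on the finite collection of singularities.

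It remains to treat the multiplicity-one singularities, where blowing up leaves $m=1$ unchanged and the linear part has a well-defined eigenvalue pair $\lambda_1,\lambda_2$. A singularity with $\lambda_1\lambda_2\neq 0$ and $\lambda_1/\lambda_2\notin\Q^+$, or with exactly one vanishing eigenvalue, is by definition simple and is left untouched, so these are exactly the terminal points of the reduction. The remaining multiplicity-one singularities are ordinary and must be shown to simplify under further blow-ups, and this is the step I expect to be the main obstacle. For a resonant ratio $\lambda_1/\lambda_2=a/b\in\Q^+$ with $\gcd(a,b)=1$ one tracks how the eigenvalue ratios transform at the two new singularities on $E_P$ (an Euclidean-algorithm type behavior), showing that the invariant $a+b$ strictly decreases until a non-resonant, hence simple, configuration appears; for a nilpotent or zero linear part with $m=1$ --- the genuinely delicate case --- one must instead use a finer Newton-polygon weight (a Poincar\'e--Dulac type measure) and possibly several successive blow-ups to reach an elementary singularity. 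Assembling the three steps, every ordinary singularity is removed after finitely many blow-ups, and since there are finitely many of them to begin with, a finite composition of blow-ups yields a manifold on which $\tilde{\mathcal X}$ has only simple singularities, that is, no ordinary ones, as desired.
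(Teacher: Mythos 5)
The paper does not prove this statement: it is Seidenberg's classical reduction theorem, quoted with references to \cite{seid} and \cite{Brunella}, so there is no in-paper argument to measure your proposal against and I can only assess it on its own terms. Its architecture (localize, blow up, find a strictly decreasing well-founded invariant, treat the multiplicity-one resonant and nilpotent cases separately) is indeed that of the classical proof, but the two steps that carry all the content are respectively wrong and missing. The ``fundamental control of multiplicity'' $m_Q\le m$ is false. Take $\omega=y^2\,dy-x^6\,dx$: the origin is an isolated singularity of multiplicity $m=2$ with characteristic polynomial $\alpha=y^3\not\equiv 0$, hence non-dicritical; in the chart $(x,t=y/x)$ the total transform is $x^2\bigl[(t^3-x^4)\,dx+xt^2\,dt\bigr]$, so the strict transform has a unique singularity on $E_P$ and its multiplicity is $3$. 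More generally $y^m\,dy-x^{2m+2}\,dx$ jumps from $m$ to $m+1$, because $\deg_t\alpha(1,t)=m+1$ and nothing prevents all of its roots from coinciding while the higher-order corrections in (\ref{estri1}) vanish to large order. This is precisely why the classical proofs do not induct on the algebraic multiplicity but on the Milnor number $\mu_P=\dim\mathcal{O}_P/(p,q)$ (finite because the singularities are isolated), for which one proves the exact identity $\sum_{Q\in E_P}\mu_Q=\mu_P-m^2+m+1$ at a non-dicritical blow-up, hence strict decrease whenever $m\ge 2$; in the example above $\mu$ drops from $12$ to $11$ even though $m$ rises. A lexicographically ordered pair $(m,\tau)$ cannot repair this, since its leading entry is not monotone.

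The second gap is that your secondary invariant $\tau$ is never defined beyond ``combinatorial data\dots read off from the Newton polygon'', and its strict decrease along equimultiple steps is asserted rather than computed; and the multiplicity-one case with nilpotent linear part, which you yourself flag as ``the main obstacle'', is left entirely open. That case is genuinely the hard core of the theorem: the identity above gives $\sum_Q\mu_Q=\mu_P+1$ when $m=1$, so the Milnor number \emph{increases} there (e.g.\ the cusp form $y\,dy-x^2\,dx$ needs three blow-ups, with $\mu$ growing along the way), and one must introduce a different, carefully weighted invariant to terminate. As it stands the proposal is an accurate road map of Seidenberg's proof rather than a proof: to complete it you would have to replace the false multiplicity bound by the Milnor-number (or equivalent) bookkeeping for $m\ge 2$, and actually carry out the $m=1$ analysis, for which I refer you to \cite{seid} or \cite{Brunella}.
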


Let $P$ be a point in a two-dimensional complex manifold $M$. The {exceptional divisor} $E_P$  produced by blowing up $P$ is called the {\it first infinitesimal neighborhood} of $P$. By induction, if $i>0$, then the points in the $i$-th infinitesimal neighborhood of $P$ are the points in the first infinitesimal neighborhood of some point in the $(i-1)$-th infinitesimal neighborhood of $P$. A point $Q$ in some infinitesimal neighborhood of $P$ is called to be \emph{proximate} to $P$ if $Q$ belongs to the strict transform of $E_P$ (see Section \ref{linsy} for a definition of strict transform of a curve). Also $Q$ is a \emph{satellite} point if it is proximate to two points; that is, if it is the intersection point of the strict transforms of two exceptional divisors. Non-satellite points are named \emph{free}.


Points in the $i$-th infinitesimal neighborhood of $P$, for some $i>0$, are said to be {\it infinitely near} to $P$. These points admit a natural ordering that we shall use in this paper and call ``to be infinitely near to", where a point $R$ precedes $Q$ if and only if $Q$ is infinitely near to $R$. Note that we agree that a point is infinitely near to itself.


A \emph{configuration of infinitely near points} of $M$ (or, simply, a configuration) is a finite set
\[
{\mathcal C}=\{Q_0, \ldots , Q_n\},
\]
such that $Q_0 \in {X}_0 = M$ and $ Q_i \in \mathrm{Bl}_{Q_{i-1}}( X_{i-1}) =: X_{i} \stackrel{\pi_{Q_{i-1}}}{\longrightarrow} X_{i-1}$,
 for $1 \leq i \leq n$;
where we have denoted by $\mathrm{Bl}_{Q_{i-1}} (X_{i-1})$ the blown-up manifold corresponding to blow-up $Q_{i-1}$ in $X_{i-1}$.


The Hasse diagram of $\mathcal C$ with respect to the above alluded order relation is a union of rooted trees  whose set of vertices is bijective with $\mathcal C$. We join with a dotted edge those vertices corresponding with points $P$ and $Q$ of $\mathcal C$ such that $Q$ is proximate to $P$ but $Q$ is not in the first infinitesimal neighborhood of $P$. The obtained labeled graph, denoted $\Gamma_{\mathcal C}$, is called the {\it proximity graph} of $\mathcal{C}$.

Example \ref{Ex.1} below shows the reduction of a singular point of a vector field and its proximity graph.

\begin{de}
{\rm The  \emph{singular configuration} of a holomorphic vector field $\mathcal X$ in a two-dimen\-sional complex manifold $M$, denoted by $\mathcal S(\mathcal X)$, is the union
${\mathcal S}(\mathcal X):=\cup_P {\mathcal S}_P(\mathcal X)$,
where $P$ runs over the set of ordinary singularities of $\mathcal X$ and ${\mathcal S}_P(\mathcal X)$ denotes the set of points $Q$ infinitely near to $P$ such that the strict transform of $\mathcal X$ has an ordinary singularity at $Q$.  The proximity graph $\Gamma_{{\mathcal S}(\mathcal X)}$ is called the \emph{singular graph} of $\mathcal X$.}
\end{de}

\begin{de}
{\rm Let $\mathcal X$ be a holomorphic vector field  in a two-dimen\-sional complex manifold $M$. The \emph{dicritical configuration} of $\mathcal X$ is  the set $\mathcal D(\mathcal X)$ of points $P\in {\mathcal S}(\mathcal X)$ such that there exists a point $Q\in {\mathcal S}(\mathcal X)$ that is infinitely near to $P$ and is a dicritical singularity of the strict transform of ${\mathcal X}$ in the blown-up manifold to which $Q$ belongs. These dicritical singularities $Q$ in $\DX$ will be called {\it infinitely near dicritical singularities of ${\mathcal X}$}.}
\end{de}

\begin{exa}\label{Ex.1} {\rm
Consider the homogeneous polynomial vector field $\mathcal X$ in $\mathbb{CP}^2$ defined by $$2XZ^4\;dX+5Y^4Z\;dY-\left(5Y^5+2X^2Z^3\right)dZ.$$ Its singularities are the points $P=(1:0:0)$ and $Q=(0:0:1)$.


Take affine coordinates $y=\frac{Y}{X}$ and $z=\frac{Z}{X}$ in the chart defined by $X\not=0$, where the point $P$ has coordinates $(y,z)=(0,0)$. The differential form in these coordinates is
$\omega_1:=5y^4z\;dy-(5y^5+2z^3)\;dz$.
$\mathcal X$ has an ordinary singularity at $P$. Consider the blow-up $\pi_P: X_1:=\mathrm{Bl}_P(\mathbb{CP}^2)\rightarrow \mathbb{CP}^2$ and coordinates $(y_1=y, z_1=z/y)$ in the chart $V_1^P$. Then, the strict transform of $\omega$ in $V_1^P$ is
\[
\tilde{\omega}_1|_{V_1^P}=-2z_1^4\; dy_1-(5y_1^3+2y_1z_1^3)\; dz_1.
\]
The unique ordinary singularity of the vector field defined by $\tilde{\omega}_1|_{V_1^P}$ is
$P_1:=(y_1,z_1)=(0,0)$. It belongs to the exceptional divisor $E_P$, whose local equation is $y_1=0$. Moreover, taking local coordinates in the chart $V_2^P$, it is easy to see that the unique point of $E_P$ that is not in $V_1^P$ is not a singularity of $\tilde{\mathcal X}$.


Now we consider the blow-up $\pi_{P_1}: X_2:=\mathrm{Bl}_{P_1}(X_1)\rightarrow X_1$ and affine coordinates $(y_2=y_1, z_2=z_1/y_1)$ in the chart $V_1^{P_1}$. The strict transform of $\omega_1$ in $V_1^{P_1}$ is
$$\tilde{\omega}_1|_{V_1^{P_1}}=(-5z_2-4y_2z_2^4)\; dy_2+(-5y_2-2y_2^2z_2^3)\; dz_2.$$
The unique singularity in $E_{P_1}\cap V_1^{P_1}$ of the strict transform of $\mathcal X$ is $P'_2:=(0,0)$; it is straightforward to check that it is a simple singularity.


Taking coordinates $(y_2=y_1/z_1, z_2=z_1)$ in $V_2^{P_1}$, we get
$$\tilde{\omega}_1|_{V_2^{P_1}}=-2z_2^2\; dy_2+\left(-5y_2^3-4y_2z_2\right)\; dz_2.$$  Then, the strict transform of $\mathcal X$ has an ordinary singularity at the unique point $P_2\in E_{P_1}\setminus V_1^{P_1}$, whose coordinates in $V_2^{P_1}$ are $(0,0)$. Since the local equation of the strict transform of $E_{P}$ in $V_2^{P_1}$ is $y_2=0$, it holds  $\{P_2\}=E_{P_1}\cap E_{P}$ and, therefore, $P_2$ is a satellite point that is proximate to $P_1$ and $P$.


Next, we have to perform the blow-up $\pi_{P_2}: X_3:=\mathrm{Bl}_{P_2}(X_2)\rightarrow X_2$ and
$$\tilde{\omega}_1|_{V_1^{P_2'}}=\left(-5y_3z_3-6z_3^2\right)\; dy_3+\left(-5y_3^2-4y_3z_3\right)\; dz_3,$$ in local coordinates $(y_3=y_2, z_3=z_2/y_2)$. The unique singularity of the strict transform of $\mathcal X$ in $E_{P_2}\cap V_1^{P_2}$ is $P_3:=(0,0)$, that belongs to the strict transform of $E_{P_1}\cap E_{P_2}$ (notice that the local equation of $E_{P_1}$ in $V_1^{P_2}$ is $z_3=0$). It is an ordinary singularity.  It is straightforward to verify that the unique point in $E_{P_2}\setminus V_1^{P_2}$ is a simple singularity.


Considering now the blow-up $\pi_{P_3}: X_4:=\mathrm{Bl}_{P_3}(X_3)\rightarrow X_3$ and local coordinates $(y_4=y_3, z_4=z_3/y_3)$ at $V_1^{P_3}$ we have that
\[
\tilde{\omega}_1|_{V_1^{P_3}}=(-10z_4-10z_4^2)\; dy_4+(-5y_4-4y_4z_4)\; dz_4.
\]
There are two new singularities at $E_{P_3}\cap V_1^{P_3}$ which are $R:=(0,0)$ and $P_4=(0,-1)$. The point $R$ is a simple singularity and, applying the change of coordinates $y_4'=y_4,\;\; z_4'=z_4+1$, it holds that
$$\tilde{\omega}_1|_{V_1^{P_3}}=\left(10z_4'-10z_4'^2\right)\; dy_4'+\left(-y_4'-4y_4'z_4'\right)\; dz_4',$$ and therefore $P_4$ is an ordinary singularity. Moreover it is easy to check that the unique point in $E_{P_3}\setminus V_1^{P_3}$ is a simple singularity.


Now, for $i\in \{4,5,\ldots, 12\}$ we consider the blow-up $\pi_{P_i}: X_{i+1}:=\mathrm{Bl}_{P_i}(X_i)\rightarrow X_i$, the coordinates $(y_{i+1}':=y_i', z_{i+1}':=z_i'/y_i')$ at $V_1^{P_i}$ and $P_{i+1}:=(0,0)\in E_{P_i}\cap V_1^{P_i}$. It is easy to check that the strict transform of $\mathcal X$ in $X_{i+1}$ has multiplicity $1$ at $P_{i+1}$. Its unique singularity in $E_{P_i}$ is $P_{i+1}$. It is ordinary, and non-dicritical whenever $i\leq 11$. Moreover
$$\tilde{\omega}_1|_{V_1^{P_{12}}}=[z_{13}'-42(y_{13}')^9(z_{13}')^2]\; dy_{13}'+[-y_{13}'-4(y_{13}')^{10}z_{13}']\; dz_{13}',$$
and, then, $P_{13}$ is a dicritical singular point. The strict transform of $\mathcal X$ in $X_{13}$ has not ordinary singularities in $E_{P_{13}}$.



Now we consider coordinates $x=\frac{X}{Z}$ and $y=\frac{Y}{Z}$ in the chart defined by $Z\not=0$, where the point $Q$ has coordinates $(x,y)=(0,0)$. The differential form that defines the restriction of $\mathcal X$ is
$$\omega_2:=2x\;dy+5y^4\;dy.$$
$Q$ is an ordinary singularity of $\mathcal X$ and its reduction process  is described in Table \ref{tabla}. The first column indicates the chart where each point (proper or infinitely near) of ${\mathcal S}_Q(\mathcal X)$ is located. The second  column corresponds to the system of local coordinates that we consider and the corresponding points. The last column shows the differential 1-forms that define the strict transforms of $\mathcal X$ at every point. Notice that $Q_3$ belongs to the strict transform of $E_{Q_1}$ and therefore $Q_3$ is proximate to $Q_1$. Observe also that $Q$, $Q_1$, $Q_2$ and $Q_3$ are non-dicritical points.

\begin{table}[t!]
\centering
    \begin{tabular}{||c|c|c||}
  \hline \hline Chart & System of coordinates & Differential form\\\hline \hline
    $Z\not=0$ & $(x=X/Z, y=Y/Z)$ at $Q$& $2x\;dx + 5y^4\; dy$ \\
    $V_2^{Q}$ & $(x_1=x/y, y_1=y)$ at $Q_1$ & $2x_1y_1\;dx_1+(2x_1^2+5y_1^3)\; dy_1$\\
    $V_2^{Q_1}$ & $(x_2=x_1/y_1, y_2=y_1)$ at $Q_2$ & $2x_2y_2\; dx_2+(4x_2^2+5y_2)\; dy_2$\\
    $V_1^{Q_2}$ & $(x_3=x_2, y_3=y_2/x_2)$  at $Q_3$ & $(6x_3y_3+5y_3^2)\; dx_3+(4x_3^2+5x_3y_3)\; dy_3$\\ \hline \hline
    \end{tabular}
    \label{tabla}
    \vspace{0.4cm}

\caption{Reduction of the singularity at $Q$.}
\end{table}

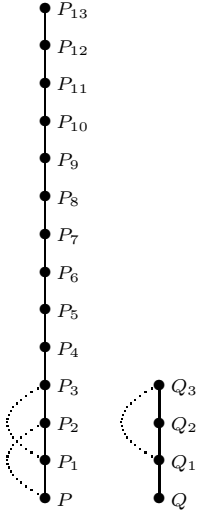
\begin{figure}[h!]
\centering
\setlength{\unitlength}{0.5cm}
\begin{picture}(9,14)

\qbezier[20](3,0)(1,1)(3,2)
\qbezier[20](3,1)(1,2)(3,3)

\put(3,0){\circle*{0.3}}\put(3.3,-0.2){\tiny{$P$}}
\put(3,0){\line(0,1){1}}

\put(3,1){\circle*{0.3}}\put(3.3,0.8){\tiny{$P_1$}}
\put(3,1){\line(0,1){1}}

\put(3,2){\circle*{0.3}}\put(3.3,1.8){\tiny{$P_2$}}
\put(3,2){\line(0,1){1}}

\put(3,3){\circle*{0.3}}\put(3.3,2.8){\tiny{$P_3$}}
\put(3,3){\line(0,1){1}}

\put(3,4){\circle*{0.3}}\put(3.3,3.8){\tiny{$P_4$}}
\put(3,4){\line(0,1){1}}

\put(3,5){\circle*{0.3}}\put(3.3,4.8){\tiny{$P_5$}}
\put(3,5){\line(0,1){1}}

\put(3,6){\circle*{0.3}}\put(3.3,5.8){\tiny{$P_6$}}
\put(3,6){\line(0,1){1}}

\put(3,7){\circle*{0.3}}\put(3.3,6.8){\tiny{$P_7$}}
\put(3,7){\line(0,1){1}}

\put(3,8){\circle*{0.3}}\put(3.3,7.8){\tiny{$P_8$}}
\put(3,8){\line(0,1){1}}

\put(3,9){\circle*{0.3}}\put(3.3,8.8){\tiny{$P_9$}}
\put(3,9){\line(0,1){1}}

\put(3,10){\circle*{0.3}}\put(3.3,9.8){\tiny{$P_{10}$}}
\put(3,10){\line(0,1){1}}

\put(3,11){\circle*{0.3}}\put(3.3,10.8){\tiny{$P_{11}$}}
\put(3,11){\line(0,1){1}}

\put(3,12){\circle*{0.3}}\put(3.3,11.8){\tiny{$P_{12}$}}
\put(3,12){\line(0,1){1}}

\put(3,13){\circle*{0.3}}\put(3.3,12.8){\tiny{$P_{13}$}}


\put(6,0){\circle*{0.3}}\put(6.3,-0.2){\tiny{$Q$}}
\put(6,0){\line(0,1){1}}

\put(6,1){\circle*{0.3}}\put(6.3,0.8){\tiny{$Q_1$}}
\put(6,1){\line(0,1){1}}

\put(6,2){\circle*{0.3}}\put(6.3,1.8){\tiny{$Q_2$}}
\put(6,2){\line(0,1){1}}

\put(6,3){\circle*{0.3}}\put(6.3,2.8){\tiny{$Q_3$}}

\qbezier[20](6,1)(4,2)(6,3)

\end{picture}
\caption{Proximity graph of $\mathcal{S}(\mathcal{X})$.}
\end{figure}
}
With the above notation, we have  ${\mathcal S}(\mathcal X)=\{P,Q\}\cup \{P_i\}_{i=1}^{13}\cup \{Q_i\}_{i=1}^3$ and ${\mathcal D}(\mathcal X)=\{P\}\cup \{P_i\}_{i=1}^{13}$. Figure 1 shows the proximity graph of the configuration ${\mathcal S}(\mathcal X)$.
\end{exa}

\section{Linear systems} \label{linsy}

\subsection{Linear systems associated with clusters}

Along this section we consider the complex projective plane $\mathbb{CP}^2$ and fix homogeneous coordinates $X, Y, Z$.

\begin{de}
{\rm
A \emph{linear system} on $\mathbb{CP}^2$ is the set of algebraic curves given by a linear subspace of $\mathbb{C}_m[X,Y,Z]\cup \{0\}$ for some natural number $m>0$, where $\mathbb{C}_m[X,Y,Z]$ denotes the set of homogeneous polynomials of degree $m$ in the variables $X, Y, Z$. If the dimension (as a projective space) of a linear system is $1$, then it is called a {\it pencil}.}
\end{de}


\begin{de}  {\rm
A \emph{cluster} of infinitely near points (or, simply,  a cluster) of $\mathbb{CP}^2$ is a pair $({\mathcal C}, {\mathbf m})$ where $\mathcal C=(Q_0,\ldots,Q_h)$ is a configuration of infinitely near points of $\mathbb{CP}^2$ and ${\mathbf m}=(m_0,\ldots,m_h)\in \N^n$.}
\end{de}

Our next step is to define linear systems on $\mathbb{CP}^2$ given by a pair formed by a cluster ${\mathcal C}$ and a positive integer. To this purpose, for each $Q_i\in {\mathcal C}$, let us denote by  $\ell(Q_i)$  the cardinality of the set $\{Q_j\in {\mathcal C} | \mbox{ $Q_i$ is infinitely near to $Q_j$} \}$.

\begin{de}\label{virtual} {\rm
Consider a cluster ${\mathcal K}=({\mathcal C}, {\mathbf m})$, an algebraic curve $C$ in $\mathbb{CP}^2$, and a point $Q_k\in {\mathcal C}$.  Assume $\ell(Q_k)=1$, that is $Q_k$ is only infinitely near to itself.  Take a local chart at $Q_k$ with local coordinates $(x,y)$ and let $f(x,y)=0$ be a local equation of $C$. We define the \emph{virtual transform} of $C$ at $Q_k$ with respect to the cluster $\mathcal K$ (denoted by $C^{\mathcal K}_{Q_k}$) as the (local) curve defined by $f(x,y)=0$. Moreover we say that $C$ \emph{passes virtually} through $Q_k$ with respect to $\mathcal K$ if the multiplicity of $C^{\mathcal K}_{Q_k}$ at $Q_k$ (that is, the degree of the first non-zero jet of $f(x,y)$), denoted by $m_{Q_k}(C^{\mathcal K}_{Q_k})$, is greater than or equal to $m_k$.

\smallskip

Suppose now that $\ell(Q_k)>1$. Let $Q_j\in {\mathcal C}$ be such that $Q_k$ is in the first infinitesimal neighborhood  of $Q_j$ and assume inductively that $C$ \emph{passes virtually} through $Q_j$ with respect to $\mathcal K$. Take local coordinates $(x,y)$ at $Q_j$ and let $f(x,y)=0$ be a local equation of $C^{\mathcal K}_{Q_j}$. We can write  $Q_k=(0,\lambda)\in V_1^{Q_j}$ (respectively, $Q_k=(\lambda,0)\in V_2^{Q_j}$) in local coordinates $(x,t=y/x)$ (respectively, $(s=x/y,y)$). Then we define the \emph{virtual transform} of $C$ at $Q_k$ with respect to the cluster $\mathcal K$ as the (local) curve defined by $x^{-m_j}f\left( x,x(t+\lambda) \right)=0$ (respectively, $x^{-m_j}f\left((s+\lambda)y,y\right)=0$). We denote it by $C^{\mathcal K}_{Q_k}$.  The above equations define also what we  call virtual transform (centered at $Q_k$)  of $C$ at the chart $V_1^{Q_j}$ (respectively, $V_2^{Q_j}$). Moreover, we say that $C$ \emph{passes virtually} through $Q_k$ with respect to $\mathcal K$ if the multiplicity of $C^{\mathcal K}_{Q_k}$ at $Q_k$, denoted by $m_{Q_k}(C^{\mathcal K}_{Q_k})$, is greater than or equal to $m_k$. Finally, the curve $C$ \emph{passes virtually} through $\mathcal K$ if it passes virtually through $Q_i$ with respect to $\mathcal K$ for all $Q_i\in {\mathcal K}$.

}

\end{de}

The {\it strict transform} $\tilde{C}$ of an algebraic curve $C$ in a manifold obtained by a sequence of point blowing-ups is the global curve given by the virtual transform through the cluster of points and multiplicities defined by the curve. Note the analogy with the similar definition given in Section \ref{SS.resolution}.

\begin{de}
{\rm
Given a positive integer $m$ and a cluster $\mathcal K=({\mathcal C},\mathbf{m})$ of $\mathbb{CP}^2$, the \emph{linear system determined by $m$ and $\mathcal K$}, denoted by
${\mathcal L}_m(\mathcal K)$ or ${\mathcal L}_m({\mathcal C},\mathbf{m})$, is the linear system on $\mathbb{CP}^2$ given by those curves defined by polynomials in $\mathbb{C}_m[X,Y,Z]\cup \{0\}$ that pass virtually through $\mathcal K$.}
\end{de}

\begin{exa}
Consider the points $P=(0:0:1)$ and $Q=(1:0:1)$ of $\mathbb{CP}^2$, whose coordinates in the chart defined by $Z\not=0$ are $(x=\frac{X}{Z}=0, y=\frac{Y}{Z}=0)$ and $(x=1,y=0)$, respectively. Consider also the following infinitely near to $P$ points: $P_1=(0,3)\in V_1^{P}$ and $P_2=(1,0)\in V_2^{P_1}$, with the  notations of Section \ref{blowup}.


Consider the cluster ${\mathcal K}=({\mathcal C}, \mathbf{m})$, where ${\mathcal C}=\{Q, P, P_1, P_2\}$ and $\mathbf{m}=(2,2,1,1)$. Let us compute the linear system ${\mathcal L}_3(\mathcal K)$. To do that, consider an arbitrary projective curve $C\in {\mathcal L}_3(\mathcal K)$ defined by an homogeneous polynomial of degree $3$ with undetermined coefficients:
$$aX^3+bX^2Y+cX^2Z+dXY^2+eXYZ+fXZ^2+gY^3+hY^2Z+iYZ^2+kZ^3,$$
whose expression in the chart $Z\not=0$ is
$$ax^3+bx^2y+cx^2+dxy^2+exy+fx+gy^3+hy^2+iy+k.$$
On the one hand, since the multiplicity of $C$ at $P$ must be greater than or equal to 2, it follows that $f=i=k=0$. On the other hand, the multiplicity of $C$ at $Q$ must be greater than or equal to 2, so the coefficients of the monomials of degree less than 2 of
$$c (1 + x)^2 + a (1 + x)^3 + e (1 + x) y +b (1 + x)^2 y + h y^2 +
 d (1 + x) y^2 + g y^3$$
are equal to 0; that is, $a=c=0$ and $b=-e$.


 The local equation defining the virtual transform of $C$ at $P_1$, $C^{\mathcal K}_{P_1}$, is
$$3e+9 h + (9 d -3e + 27 g )x_1 + (e + 6 h) y_1 + (6 d - e + 27 g )x_1 y_1 +
 h  y_1^2 + (d  + 9 g )x_1 y_1^2 + g x_1 y_1^3=0$$
 in the coordinates $(x_1=x, y_1=y/x)$.
Therefore, since the multiplicity of $C^{\mathcal K}_{P_1}$ at $P_1$ must be greater than or equal to 1, we get $e=-3h$.
Finally, the local equation of the virtual transform of $C$ at $P_2$ with respect to $\mathcal K$ is
$$3 h+ (9 d  + 27 g + 9h) x_2 + h y_2+ (6 d + 27 g + 3h) x_2 y_2 + (d  +
 9 g) x_2 y_2^2 + g x_2 y_2^3=0,$$
where $x_2=x_1/y_1$ and $y_2=y_1$. Thus $C^{\mathcal K}_{P_2}$ passes virtually through $P_2$ with respect to $\mathcal K$ if and only if $h=0$.


As a consequence, ${\mathcal L}_3(\mathcal K)$ is the projective space generated by curves given by the monomials  $XY^2$ and $Y^3$; that is, the curves in ${\mathcal L}_3(\mathcal K)$ are those defined by an equation of the type $Y^2L=0$, where $L=\alpha X+\beta Y$, for some $(\alpha, \beta) \in \mathbb{C}^2\setminus \{(0,0)\}$.
\end{exa}

\subsection{Cluster of base points of a linear system}\label{bp}

Let $n$ be a positive integer and  $\mathcal L$  a linear system on $\mathbb{CP}^2$ such that ${\mathcal L}$ is given by $\mathbb{P}V$, where $V=\langle F_1, F_2, \ldots,F_s\rangle$ is the linear space over $\mathbb{C}$ spanned by linearly independent polynomials $F_1, F_2, \ldots,F_s\in \mathbb{C}_n[X,Y,Z]$. Assume that $F_1, F_2,\ldots,F_s$ have no common factor. Then, there exists a configuration of (infinitely near) points of $\mathbb{CP}^2$, $\mathcal{BP}({\mathcal L})$, and a finite set of linear subspaces ${\mathcal H}_i\subsetneq \mathbb{CP}^{s-1}$, $1\leq i\leq t$, such that the strict transforms of the curves with equations $$\alpha_1 F_1(X,Y,Z)+ \alpha_2 F_2(X,Y,Z)+ \cdots+\alpha_s F_s(X,Y,Z)=0,$$  $(\alpha_1, \alpha_2, \ldots,\alpha_s)\in \mathbb{CP}^{s-1}\setminus \bigcup_{i=1}^t {\mathcal H}_i$ (which, in the sequel, we call \emph{generic} curves of $\mathcal L$) have the same multiplicities at every point  $Q\in \mathcal{BP}({\mathcal L})$ (denoted by $\mathrm{mult}_Q({\mathcal L})$) and have empty intersection at the manifold obtained by blowing-up the points in $\mathcal{BP}({\mathcal L})$.  Notice that, if ${\mathcal L}$ is a pencil, then $\bigcup_{i=1}^t {\mathcal H}_i$ is a finite set.

\begin{de} {\rm
 The cluster $(\mathcal{BP}({\mathcal L}),\mathbf{m})$, with $\mathcal{BP}({\mathcal L})$ as it was defined above and $\mathbf{m}=\left(\mathrm{mult}_Q({\mathcal L})\right)_{Q\in \mathcal{BP}({\mathcal L})}$, is the  \emph{cluster of base points} of $\mathcal L$.}
\end{de}

\begin{exa}
Let $\mathcal L$ be the linear system on $\mathbb{CP}^2$ defined by the curves $\alpha F(X,Y,Z)+\beta Z^5=0$, where $F(X,Y,Z):=X^2Z^3+Y^5$ and $(\alpha,\beta)\in\mathbb{C}^2\setminus \{(0,0)\}$. It is easy to check that the configuration of base points $\mathcal{BP}({\mathcal L})$ coincides with the configuration ${\mathcal D}({\mathcal X})$ of Example \ref{Ex.1}.

\begin{table}[ht!]
\centering
    \begin{tabular}{||c|c|c||}
   \hline \hline Chart & System of coordinates & Strict transform of a generic curve\\ \hline \hline
    $X\not=0$ & $(y=Y/X, z=Z/X)$ at $P$&  $\alpha (z^3+y^5)+\beta z^5$ \\
    $V_1^{P}$ & $(y_1=y, z_1=z/y)$ at $P_1$&  $\alpha (y_1^2+z_1^3)+\beta y_1^2z_1^5$ \\
    $V_2^{P_1}$ & $(y_2=y_1/z_1, z_2=z_1)$ at $P_2$ & $\alpha(z_2+y_2^2)+\beta y_2^2z_2^5 $\\
    $V_1^{P_2}$ & $(y_3=y_2, z_3=z_2/y_2)$ at $P_3$& $\alpha(y_3+z_3)+\beta y_3^6z_3^5 $ \\
    $V_1^{P_3}$ & $(y_4=y_3, z_4=z_3/y_3+1)$ at $P_4$ & $\alpha z_4+\beta y_4^{10}(z_4-1)^5$\\
    $V_1^{P_{i-1}}$ & $(y_i=y_{i-1}, z_i=z_{i-1}/y_{i-1})$ at $P_i$ & $\alpha z_i+\beta y_i^{14-i}(z_iy_i^{i-4}-1)^5$\\
     \hline\hline
    \end{tabular}
\vspace{0.4cm}
\label{tabla2}
\caption{Base points of $\mathcal L$. We note that $5 \leq i \leq 13$  and $\alpha\neq0$.}
\end{table}

Table 2
shows the local expressions of the successive strict transforms of the generic elements of the linear system. Then the cluster of base points of $\mathcal L$ is $\left({\mathcal D}({\mathcal X}),(3,2,1_{12})\right)$, where $1_{12}$ means a sequence of 12 ones.
\end{exa}

\section{Resolution of a pencil and infinitely near dicritical points}

\label{sec6}

In this section, we shall  briefly describe the resolution process of a pencil of curves in $\mathbb{CP}^2$ and compare it with the reduction of singularities of the vector field ${\mathcal X}$ whose invariant curves are given by the pencil (that is, the quotient of  two different curves of the pencil provides a rational first integral of ${\mathcal X}$). Additional information can be found in \cite{julio}.

Consider a pencil ${\mathcal L}$ given by $\mathbb{P}\langle F_1,F_2\rangle$, where $F_1, F_2$ are polynomials in ${\mathbb C}_n[X,Y,Z]$ (for some positive integer $n$) without common components. Let $P$ be any point in $\mathcal{BP}({\mathcal L})$. As in Definition \ref{virtual}, take local coordinates $(x,y)$ at $P$ and consider the virtual transforms of the elements in ${\mathcal L}$ with respect to the cluster $\left({\mathcal C},(m_Q)_{Q \in \mathcal C}\right)$, where ${\mathcal C}:=\{Q\in \mathcal{BP}({\mathcal L})\mid Q\not=P\mbox{ and } Q \mbox{ is infinitely near to } P\}$ and $m_Q:=\mult_Q({\mathcal L})$ for every $Q$. These virtual transforms will be given by polynomials
\begin{multline*}
\alpha f_1(x,y)+\beta f_2(x,y)\\
=D(x,y)\left(\alpha f_1^{(r)}(x,y)+\beta f_2^{(r)}(x,y)\right)+\alpha f_1^{(>m_P)}(x,y)+\beta f_2^{(>m_P)}(x,y),
\end{multline*}
where $m_P:=\mult_P({\mathcal L})$, $f_i^{(j)}$ (respectively, $f_i^{(>j)}$) denotes the $j$-th jet of $f_i$ (respectively, $f_i-f_i^{(j)}$), $i=1,2$, $j \in {\mathbb N}$,  $D(x,y)$ is the greatest common divisor of $f_1^{(m_P)}$ and $f_2^{(m_P)}$, and $r:=m_P-d$,  where $d=\deg(D)$. Notice that,  except for finitely many elements $(\alpha: \beta)\in \mathbb{CP}^1$, the above expression defines the strict transform of a generic element of ${\mathcal L}$. The virtual transforms in the chart $V_1^{P}$ (with local coordinates $(x_1:=x, y_1:=y/x)$) of the elements in ${\mathcal L}$ on the manifold obtained after blowing-up $P$ are defined by
\begin{multline}\label{v2}
D(1,y_1)\left(\alpha f_1^{(r)}(1,y_1)+\beta f_2^{(r)}(1,y_1)\right)\\
+x_1\left(\alpha f_1^{(m_P+1)}(1,y_1)+\beta f_2^{(m_P+1)}(1,y_1)+\cdots\right).
\end{multline}
A similar expression is obtained in the chart $V_2^P$. The points in $\mathcal{BP}({\mathcal L})\cap V_1^P$ have the form $(0,\xi)$, $\xi$ being a root of the polynomial $D(1,t)$.

\begin{de}
{\rm With the above notations, a point $P$ in $\mathcal{BP}({\mathcal L})$ is said to be \emph{dicritical} with respect to ${\mathcal L}$ if $r>0$.

}
\end{de}

\begin{rem}\label{nota1}
{\rm
From the expression (\ref{v2}), it is clear that $P$ is dicritical whenever it is a maximal point of $\mathcal{BP}({\mathcal L})$ with respect to the ordering ``to be infinitely near to" (because $D(x,y)=1$ in this case).
}
\end{rem}

Let $X$ be the manifold obtained after blowing-up the points in $\mathcal{BP}({\mathcal L})$ and let $P\in X$. Let $S$ be that point of $\mathcal{BP}({\mathcal L})\cap \mathbb{CP}^2$ such that $P$ is proximate to $S$. Assume without loss of generality that $S=(0:0:1)$. Performing changes of coordinates in the successive blowing-ups as described in Section \ref{blowup}, we  obtain a system of coordinates $(x,y)$ at $P$ and polynomials $g_1(x,y), g_2(x,y)$ such that $\alpha g_1(x,y)+\beta g_2(x,y)=0$, $(\alpha: \beta)\in \mathbb{CP}^1$, are the equations at $P=(0,0)$ of the virtual transforms of the elements in ${\mathcal L}$ with respect to the cluster of base points of ${\mathcal L}$.  Notice that $g_1$ and $g_2$ do not vanish simultaneously at $(0,0)$.

As a consequence of the above paragraph, the assignment $P\mapsto (g_1(0,0):g_2(0,0))$ defines a holomorphic map $\varphi:X\rightarrow \mathbb{CP}^1$ that extends to $X$ the rational map $$\phi: \mathbb{CP}^2\cdots \rightarrow \mathbb{CP}^1$$ given by $\phi:  S \mapsto (F_1(S):F_2(S))$ (eliminating its indeterminacies); that is, $\phi\circ \pi=\varphi$, where $\pi: X \rightarrow \mathbb{CP}^2$ is the before alluded composition of blowing-ups.

\begin{pro}\label{proposition2}
With the above notations, consider a point $P\in \mathcal{BP}({\mathcal L})$. The following statements are equivalent:
\begin{itemize}
\item[(a)] $P$ is not dicritical with respect to ${\mathcal L}$.

\item[(b)] The strict transform on $X$ of the exceptional divisor $E_P$, also denoted $E_P$, is a component of the virtual transform of some curve in ${\mathcal L}$ with respect to the cluster of base points of $\mathcal L$.

\item[(c)] $E_P$ is a component of some fiber of the holomorphic map $\varphi:X\rightarrow \mathbb{CP}^1$ that the pair $(F_1,F_2)$ defines.

\item[(d)] $\mult_P({\mathcal L}) = \sum_Q \mult_Q({\mathcal L})$, where the sum is taken over the set of  proximate to $P$ points in $\DX$.
 \end{itemize}

\end{pro}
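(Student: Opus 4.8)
The plan is to show that each of (b), (c) and (d) is equivalent to the explicit condition (a), that is, to $r=0$, which can be read off directly from \eqref{v2}. All three arguments rest on the factorization of the leading forms: writing $f_i^{(m_P)}=D\,\hat f_i$ with $D=\gcd\bigl(f_1^{(m_P)},f_2^{(m_P)}\bigr)$ and $\hat f_i$ homogeneous of degree $r=m_P-\deg D$, the cofactors $\hat f_1,\hat f_2$ are coprime and at least one leading form is nonzero. When $r=0$ the $\hat f_i$ are constants $c_i$, not both zero, and the leading forms are proportional; when $r>0$ both $\hat f_i$ are nonzero coprime forms of positive degree, so no nontrivial combination $\alpha\hat f_1+\beta\hat f_2$ vanishes identically.

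First I would prove (a)$\Leftrightarrow$(b) locally in the chart $V_1^P$ with coordinates $(x_1,y_1)$, where $E_P=\{x_1=0\}$. By \eqref{v2} the virtual transform of the member $(\alpha:\beta)$ is $D(1,y_1)\bigl(\alpha\hat f_1(y_1)+\beta\hat f_2(y_1)\bigr)+x_1(\cdots)$, so $E_P$ is a component of it exactly when $x_1$ divides it, i.e.\ when $\alpha\hat f_1+\beta\hat f_2\equiv 0$. For $r=0$ this is the single linear condition $\alpha c_1+\beta c_2=0$, which has a nonzero solution $(\alpha:\beta)$; hence some member carries $E_P$ in its virtual transform. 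For $r>0$ coprimality forces $\alpha=\beta=0$, so no member does. This yields (a)$\Leftrightarrow$(b).

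Next I would establish (a)$\Leftrightarrow$(c) by computing $\varphi|_{E_P}$. Evaluating the virtual transforms $g_1,g_2$ of $F_1,F_2$ along $E_P\cap V_1^P$ at a point $(0,\xi)$ with $D(1,\xi)\neq 0$ gives $g_i(0,\xi)=D(1,\xi)\hat f_i(\xi)$, whence $\varphi(0,\xi)=(\hat f_1(\xi):\hat f_2(\xi))$. If $r=0$ this is the constant $(c_1:c_2)$, so $E_P$ is contracted by $\varphi$ and therefore lies in a single fibre, of which it is a component; if $r>0$ the assignment $\xi\mapsto(\hat f_1(\xi):\hat f_2(\xi))$ is a dominant morphism $E_P\to\CP^1$ of degree $r$, so $E_P$ meets each fibre in finitely many points and is a component of none. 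Hence (a)$\Leftrightarrow$(c).

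Finally, for (a)$\Leftrightarrow$(d) I would work on the surface $X$ and compute the intersection number $\tilde C\cdot\tilde E_P$ of the strict transforms of a generic member and of $E_P$ in two ways. The proximity (excess) formula for the cluster of base points gives $\tilde C\cdot\tilde E_P=\mult_P({\mathcal L})-\sum_Q \mult_Q({\mathcal L})$, the sum running over the points of $\DX$ proximate to $P$ (points outside the cluster contributing $0$); this number vanishes exactly when (d) holds. On the other hand, for a generic value $t\in\CP^1$ the fibre $\varphi^{-1}(t)$ contains no vertical exceptional component, so it coincides with $\tilde C$, and it meets $\tilde E_P$ in $\deg(\varphi|_{\tilde E_P})=r$ points; thus $\tilde C\cdot\tilde E_P=r$. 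Comparing the two computations gives $\mult_P({\mathcal L})-\sum_Q\mult_Q({\mathcal L})=r$, so $r=0$ iff (d). I expect this equivalence to be the main obstacle: one must invoke the proximity formula with the correct accounting of the proximate points of $\DX$, and must check that for a generic $t$ the fibre is reduced along $\tilde E_P$ and free of exceptional components, so that $\tilde C\cdot\tilde E_P$ genuinely equals the degree $r$ of $\varphi|_{\tilde E_P}$. By contrast, the local computations underlying (b) and (c) are routine once \eqref{v2} is available.
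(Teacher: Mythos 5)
Your proof is correct, and it reaches the paper's conclusions by a route that coincides with the paper's for (b) but is organized differently for (c) and (d). For (a)$\Leftrightarrow$(b) you argue exactly as the paper does: by \eqref{v2}, $E_P$ divides the virtual transform of the member $(\alpha:\beta)$ precisely when $\alpha f_1^{(r)}+\beta f_2^{(r)}\equiv 0$, which admits a nontrivial solution if and only if $r=0$. For (c), the paper deduces it from (b) in one line, observing that the fibres of $\varphi$ \emph{are} the virtual transforms of the members of $\mathcal L$; you instead compute $\varphi|_{E_P}$ explicitly as $\xi\mapsto\left(f_1^{(r)}(1,\xi):f_2^{(r)}(1,\xi)\right)$ and note that it is constant iff $r=0$ --- slightly longer, but it buys the extra fact that $\deg(\varphi|_{E_P})=r$. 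For (d), the paper works locally in the first neighborhood of $P$: assuming $x\nmid D$ it factors $D(1,y_1)=\prod(y_1-\xi_i)^{d_i}$ with $\sum d_i=m_P-r$, reads the $d_i$ as intersection multiplicities of the strict transform of a generic member with $E_P$, and converts these into $\sum_Q \mult_Q({\mathcal L})$ over proximate base points via the Noether formula \eqref{eq.Noether}. You instead compute the single global number $\tilde C\cdot \tilde E_P$ on $X$ in two ways --- by the proximity formula, giving $\mult_P({\mathcal L})-\sum_Q\mult_Q({\mathcal L})$, and as $\deg(\varphi|_{\tilde E_P})=r$ --- which is the same Noether-type bookkeeping packaged globally, and which yields the cleaner exact identity $\mult_P({\mathcal L})-\sum_Q\mult_Q({\mathcal L})=r$ rather than only the vanishing criterion. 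The two verifications you single out as delicate (that a generic fibre is exactly $\tilde C$ with no exceptional components, and that every intersection point of $\tilde C$ with $E_P$ entering the count is a base point proximate to $P$) are precisely the facts the paper also uses, implicitly through the definition of $\mathcal{BP}({\mathcal L})$ and its remark that such intersection points are proximate to $P$; they hold for the reasons you give.
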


\begin{proof}
$P$ is not a dicritical point with respect to ${\mathcal L}$ if and only if $(f_1^{(r)}(1,y_1), f_2^{(r)}(1,y_1))=(a,b)\in \mathbb{C}^2\setminus \{(0,0)\}$. By Equality (\ref{v2}), this happens if and only if $E_P$ is a component of the virtual transform (with respect to the cluster of base points of ${\mathcal L}$) of the curve defined by $bF_1(X,Y,Z)-aF_2(X,Y,Z)=0$. This shows the equivalence between (a) and (b).

The equivalence between (b) and (c) is clear because the fibers of $\varphi$ are just the curves in $X$ defined by the virtual transforms of the elements in ${\mathcal L}$ with respect to the cluster of base points of ${\mathcal L}$.

To end the proof, we can assume (performing a change of variables if necessary) that $x$ does not divide $D(x,y)$. Then $P$ is non-dicritical  with respect to ${\mathcal L}$ if and only if $D(1,y_1)=\prod_{i=1}^{q}(y_1-\xi_i)^{d_i}$, where $q, d_i\in \mathbb{N}$, $\xi_i\in \mathbb{C}$, $1\leq i\leq q$, $\xi_i\not=\xi_j$ if $i\neq j$, and $\sum_{i=1}^{q} d_i=m_P$. This is equivalent to say that the strict transform of a generic curve of $\mathcal L$ meets $E_P$ at $q$ different points $R_i$ (with local coordinates $(0,\xi_i)$), $1\leq i\leq q$, and $m_P=\sum_{i=1}^{q} d_i$, where $d_i$ is the intersection multiplicity at $R_i$ of the just mentioned strict transform  and $E_P$. Taking  into account that the points of $\mathcal{BP}(\mathcal{L})$ belonging to the intersection of the strict transforms of a generic curve and $E_P$ are proximate to $P$, it holds that the equivalence between (a) and (d) follows from Noether Formula \cite[Theorem 3.3.1]{C}, which is showed later in (\ref{eq.Noether}).
\end{proof}


For a pencil $\mathcal L$ as at the beginning of the section, consider the vector field $\mathcal{X}_{\mathcal L}$ in $\mathbb{CP}^2$ whose invariant curves are given by the pencil. This vector field is defined by the homogeneous 1-form (in projective coordinates) $\Omega_{\mathcal{L}}:=AdX+BdY+ZdZ$, where  $(A,B,C)=(A',B',C')/\gcd(A',B',C')$ and
$$A':=F_2\frac{\partial F_1}{\partial X}-F_1\frac{\partial F_2}{\partial X},\;\; B':=F_2\frac{\partial F_1}{\partial Y}-F_1\frac{\partial F_2}{\partial Y},\;\; C':=F_2\frac{\partial F_1}{\partial Z}-F_1\frac{\partial F_2}{\partial Z}.$$

Now set $x,y$ local coordinates at an open neighborhood $V$ of a point $P$ in a two-dimensional complex manifold $M$, and $f,g$  holomorphic functions in $V$. Consider the \emph{local pencil} $\Gamma$ of curves in $V$ defined by equations $\alpha f+\beta g=0$, where $(\alpha:\beta)$ runs over $\mathbb{CP}^1$. Its {\it associated vector field in $V$} is defined by the 1-form $\omega_{\Gamma}:=a(x,y)dx+b(x,y)dy$, where $(a(x,y), b(x,y)):=(\bar{a}(x,y), \bar{b}(x,y))/\gcd(\bar{a},\bar{b})$ and $\bar{a}(x,y)=g\frac{\partial f}{\partial x}-f\frac{\partial g}{\partial x}$, $\bar{b}(x,y):=g\frac{\partial f}{\partial y}-f\frac{\partial g}{\partial y}$. It is not difficult to verify that the local vector fields defined by the pencils given by the restrictions of $F_1$ and $F_2$ to the corresponding affine charts patch together to give rise to the \emph{global} vector field $\mathcal{X}_{\mathcal{L}}$.








\begin{lem}
\label{el1uno}
With the above notations, let $\Gamma$ be a \emph{local} pencil at a point $P\in M$. Then, the operations on $\Gamma$ ``blowing-up'' and ``taking associated 1-forms'' commute. More specifically,  let $\pi$ the blow-up of $P$ in $M$ and consider strict transforms with respect to $\pi$. If $\tilde{\Gamma}$ is the \emph{local} pencil at an open neighborhood of $Q\in E_P$ spanned by the strict transforms of two generic elements of $\Gamma$, then $\omega_{\tilde{\Gamma}}=\tilde{\omega}_{\Gamma}$, where $\tilde{\omega}_{\Gamma}$ denotes the strict transform of $\omega_{\Gamma}$.
\end{lem}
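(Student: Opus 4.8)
The plan is to work in the blow-up chart $V_1^P$ with coordinates $(x_1,y_1)$ and $\pi_P(x_1,y_1)=(x_1,x_1y_1)$ (the chart $V_2^P$ being entirely symmetric), and to reduce everything to a single Leibniz computation. First I would record that the associated $1$-form does not depend on the chosen generators: if $u=\alpha_1 f+\beta_1 g$ and $v=\alpha_2 f+\beta_2 g$ are two elements of $\Gamma$, a direct expansion gives $v\,du-u\,dv=(\alpha_1\beta_2-\alpha_2\beta_1)(g\,df-f\,dg)$, so the numerator $g\,df-f\,dg$ of $\omega_{\Gamma}$ is multiplied only by a nonzero constant, which disappears after dividing by the $\gcd$ of the coefficients. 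Hence I am free to take $u,v$ to be two \emph{generic} elements of $\Gamma$, both of multiplicity $m$ at $P$, where $m$ is the generic multiplicity of $\Gamma$ at $P$; their strict transforms $\tilde u,\tilde v$ span $\tilde\Gamma$ and satisfy $\pi_P^*u=x_1^{m}\tilde u$ and $\pi_P^*v=x_1^{m}\tilde v$, with $\tilde u,\tilde v$ not divisible by $x_1$.

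The heart of the proof is the identity
\[
\pi_P^*\bigl(v\,du-u\,dv\bigr)=x_1^{2m}\bigl(\tilde v\,d\tilde u-\tilde u\,d\tilde v\bigr).
\]
To get it I would use that pullback commutes with $d$ and expand by the Leibniz rule: $\pi_P^*(v\,du)=x_1^m\tilde v\,(m x_1^{m-1}\tilde u\,dx_1+x_1^m\,d\tilde u)$ and similarly for $\pi_P^*(u\,dv)$. The two terms carrying $dx_1$ are both equal to $m x_1^{2m-1}\tilde u\tilde v\,dx_1$ and cancel upon subtraction, and this cancellation is exactly what forces one to use two generators of the \emph{same} multiplicity $m$ (for generators of unequal multiplicities a spurious multiple of $dx_1$ survives). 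The right-hand side $\tilde v\,d\tilde u-\tilde u\,d\tilde v$ is, by definition, the un-normalized numerator of the associated $1$-form $\omega_{\tilde\Gamma}$ of the pencil $\tilde\Gamma=\langle\tilde u,\tilde v\rangle$.

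It remains to match the two normalizations. Writing $\omega_{\Gamma}=(g\,df-f\,dg)/\delta$ with $\delta$ the $\gcd$ of the coefficients, the identity above shows that $\pi_P^*\omega_{\Gamma}$ is a meromorphic multiple of $\tilde v\,d\tilde u-\tilde u\,d\tilde v$; since $\pi_P^*\omega_{\Gamma}$ is holomorphic, the strict transform $\tilde\omega_{\Gamma}$, obtained by stripping off the maximal power of $x_1$ as in Section \ref{SS.resolution}, is a reduced holomorphic $1$-form proportional to $\tilde v\,d\tilde u-\tilde u\,d\tilde v$, whereas $\omega_{\tilde\Gamma}$ is by definition the reduction of that same form. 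Finally I would invoke the elementary fact that two reduced holomorphic $1$-forms (coefficients without common factor) that are proportional over the field of meromorphic functions differ only by a unit: if $\psi_1\eta_1=\psi_0\eta_2$ with $\psi_0,\psi_1$ coprime, comparing the $\gcd$'s of the coefficients forces $\psi_0,\psi_1$ to be units. Thus $\tilde\omega_{\Gamma}$ and $\omega_{\tilde\Gamma}$ coincide, and since $V_1^P$ and $V_2^P$ cover $E_P$ and the local strict transforms glue, the global equality $\omega_{\tilde\Gamma}=\tilde\omega_{\Gamma}$ follows. The main obstacle I anticipate is precisely this last bookkeeping: the power of $x_1$ removed in forming $\tilde\omega_{\Gamma}$ differs in the dicritical and non-dicritical cases (by Proposition \ref{nondicritical}, according to whether or not $E_P$ is invariant), and one must track the pulled-back factor $\pi_P^*\delta$ carefully to see that the two reductions produce the \emph{same} form rather than merely proportional ones.
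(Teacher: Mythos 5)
Your proposal is correct and follows essentially the same route as the paper's proof: both reduce to the identity $\pi_P^*\bigl(v\,du-u\,dv\bigr)=x_1^{2m}\bigl(\tilde v\,d\tilde u-\tilde u\,d\tilde v\bigr)$ for two generic (equimultiple) elements and then pass to reduced forms, the only difference being that you obtain the identity via $\pi_P^*\circ d=d\circ\pi_P^*$ and the Leibniz rule while the paper writes out the equivalent chain-rule identities for the coefficients. Your closing discussion of matching normalizations (reduced $1$-forms proportional over meromorphic functions differ by a unit) is in fact slightly more explicit than the paper's one-line appeal to ``taking reduced forms.''
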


\begin{proof}
Assume that $f$ and $g$ are generic elements of $\Gamma$. Take local coordinates $x', y'$ at $V_1^P$.
On the one hand, it holds
$$\omega_{\tilde{\Gamma}}=\frac{ \bar{\omega}_{\tilde{\Gamma}}}{\gcd({a}',{b}')},$$ where
$\bar{\omega}_{\tilde{\Gamma}}={a}'(x',y')dx+{b}'(x',y')dy$,
${a}'(x',y')=\tilde{g}\frac{\partial \tilde{f}}{\partial x'}-\tilde{f}\frac{\partial \tilde{g}}{\partial x'}$ and ${b}'(x',y'):=\tilde{g}\frac{\partial \tilde{f}}{\partial y'}-\tilde{f}\frac{\partial \tilde{g}}{\partial y'}$, $\tilde{f}$ and $\tilde{g}$ being the strict transforms of  $f$ and $g$ at $V_P^1$. On the other hand,  the strict transform of $\omega_{\Gamma}$ in $V_P^1$ is
$$\tilde{\omega}_{\Gamma}=\frac{\omega^*_{\Gamma}}{\gcd(\bar{a},\bar{b})},$$
where
\[
\begin{split}
\bar{a}(x',y'):=& \; g(x',x'y')\frac{\partial f}{\partial x}(x',x'y')-f(x',x'y')\frac{\partial g}{\partial x}(x',x'y')\\
&+y'\left(g(x',x'y')\frac{\partial f}{\partial y}(x',x'y')-f(x',x'y')\frac{\partial g}{\partial y}(x',x'y')\right),\\
\bar{b}(x',y'):=& \; g(x',x'y')\frac{\partial f}{\partial y}(x',x'y')-f(x',x'y')\frac{\partial g}{\partial y}(x',x'y'),\\
\omega_{\Gamma}^*=& \; \bar{a}(x',y')dx'+\bar{b}(x',y')dy'.
\end{split}
\]

Let $h(x,y)$ be a polynomial whose multiplicity at $(0,0)$ is $m$ and write $h(x',x'y')=(x')^m\tilde{h}(x',y')$. The following identities hold: $\frac{\partial h}{\partial x}(x',x'y')=\frac{\partial (h(x',x'y'))}{\partial x'}-y'\frac{\partial h}{\partial y}(x',x'y')$ and $\frac{\partial h}{\partial y}(x',x'y')=x'\frac{\partial (\tilde{h}(x',x'y'))}{\partial y'}$. Setting $s$  the multiplicity of the curves defined by $f$ and $g$ at $P$, the above identities allow us to prove that $\omega_{\Gamma}^*=(x')^{2s} \bar{\omega}_{\tilde{\Gamma}}$ and so our result holds since it suffices to take reduced forms.
\end{proof}

\begin{pro}
Let $\mathcal{L}$ be a pencil as at the beginning of this section and let $P$ be a base point of $\mathcal L$. Then, $P$ is dicritical with respect to $\mathcal L$ if and only if $P$ is an infinitely near dicritical singularity of $\XL$.
\end{pro}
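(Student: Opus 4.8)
The plan is to reduce the whole statement, by means of Lemma \ref{el1uno}, to a local computation at $P$, and then to compare the lowest jet of the associated $1$-form with the condition $\alpha\equiv 0$ defining dicriticalness in the reduction process. Write $m_P=\mult_P(\mathcal L)$, and blow up the base points of $\mathcal L$ preceding $P$. Applying Lemma \ref{el1uno} at each of these blow-ups, the strict transform of $\XL$ in a neighbourhood of $P$ is exactly the vector field $\omega_\Gamma$ associated with the local pencil $\Gamma$ at $P$ generated by the local equations $g_1,g_2$ of the strict transforms of two generic members of $\mathcal L$; hence $P$ is an infinitely near dicritical singularity of $\XL$ if and only if $\omega_\Gamma$ is dicritical at $P$ (recall that a dicritical singularity is ordinary, so membership in $\mathcal S(\XL)$ is automatic). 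Here $g_1,g_2$ have order $m_P$ at $P$, with leading forms $G_i:=g_i^{(m_P)}$ of degree $m_P$, and $\omega_\Gamma$ is the reduced form of $\bar\omega:=g_2\,dg_1-g_1\,dg_2=\bar a\,dx+\bar b\,dy$, i.e.\ $\bar\omega=\delta\,\omega_\Gamma$ with $\delta=\gcd(\bar a,\bar b)$. Finally, by the very definition of dicriticalness with respect to $\mathcal L$, one has $r>0$ if and only if $G_1$ and $G_2$ are \emph{not} proportional.

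The device that dispenses with the passage from $\bar\omega$ to its reduced form $\omega_\Gamma$ is a radial-contraction criterion. Let $R=x\,\partial_x+y\,\partial_y$ be the Euler field and, for a germ $\eta=A\,dx+B\,dy$, let $m(\eta)$ be the order of its lowest non-zero jet. Rewriting the condition $\alpha\equiv 0$ in the coordinates $(a,b)$ shows that the degree-$(m(\eta)+1)$ part of $\iota_R\eta=xA+yB$ equals $\pm\alpha$; therefore $\eta$ is dicritical if and only if $\mathrm{ord}(\iota_R\eta)\ge m(\eta)+2$. Since $\iota_R\bar\omega=\delta\,\iota_R\omega_\Gamma$ and $m(\bar\omega)=\mathrm{ord}(\delta)+m(\omega_\Gamma)$, the quantity $\mathrm{ord}(\iota_R\,\cdot\,)-m(\,\cdot\,)$ is unchanged upon dividing by $\delta$, so $\omega_\Gamma$ is dicritical if and only if $\bar\omega$ is. I expect this gcd-invariance to be the main obstacle: dicriticalness is a property of the \emph{reduced} field, whereas the leading-term computations are easiest on $\bar\omega$, and one must justify that the common factor $\delta$ cannot turn one character into the other.

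It then remains to analyse the lowest jet of $\bar\omega$, which Euler's identity handles directly. The degree-$(2m_P-1)$ part of $\bar\omega$ is $G_2\,dG_1-G_1\,dG_2$, and $\iota_R(G_2\,dG_1-G_1\,dG_2)=m_P(G_2G_1-G_1G_2)=0$. If $r>0$, then $G_1,G_2$ are not proportional, so $G_2\,dG_1-G_1\,dG_2\neq 0$; this is the lowest jet of $\bar\omega$, so $m(\bar\omega)=2m_P-1$ while its radial contraction vanishes, giving $\mathrm{ord}(\iota_R\bar\omega)\ge 2m_P+1=m(\bar\omega)+2$, whence $\bar\omega$, and therefore $\omega_\Gamma$, is dicritical. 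If instead $r=0$, then $G_2=\lambda G_1$; replacing $g_2$ by $g_2-\lambda g_1$ leaves $\bar\omega$ unchanged and raises the order of the second generator to some $m'>m_P$ with leading form $H$, so the lowest jet of $\bar\omega$ becomes $H\,dG_1-G_1\,dH$, for which $\iota_R(H\,dG_1-G_1\,dH)=(m_P-m')\,G_1H\neq 0$. Hence $\mathrm{ord}(\iota_R\bar\omega)=m(\bar\omega)+1$ and $\bar\omega$, and therefore $\omega_\Gamma$, is not dicritical. Combining the two cases gives the asserted equivalence.
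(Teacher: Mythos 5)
Your proposal is correct and follows essentially the same route as the paper: reduce to the local pencil at $P$ via Lemma \ref{el1uno}, apply Euler's identity to the radial contraction of the lowest jet of $g_2\,dg_1-g_1\,dg_2$, and split into the cases of proportional versus non-proportional initial forms (replacing a generator by one of higher order in the non-dicritical case). Your explicit ``radial-contraction criterion'' and the invariance of $\mathrm{ord}(\iota_R\,\cdot\,)-m(\,\cdot\,)$ under division by $\gcd(\bar a,\bar b)$ is just a cleaner packaging of the paper's identity $h^{(i)}\alpha=x\bar a_{\mathrm{in}}+y\bar b_{\mathrm{in}}$, making explicit a point the paper leaves implicit.
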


\begin{proof}
Let $f$ be a polynomial in the local variables $x,y$ at $P$ defining the strict transform at $P$ of a generic element of $\mathcal L$. Let $m$ be the multiplicity of $f$ at $P$.

Assume that $P$ is dicritical with respect to $\mathcal{L}$ and take a polynomial $g$ defining the strict transform of an element of $\mathcal L$ different from that given by $f$. Then the initial forms $f^{(m)}$ and $g^{(m)}$ of $f$ and $g$ are linearly independent. Consider the local vector field $\omega_{\mathcal{L}}$ that the pencil determines at $P$  as defined above Lemma \ref{el1uno}.
Set $$\bar{a}(x,y)dx+\bar{b}(x,y)dy=h(x,y)\cdot \omega_{\mathcal{L}}$$ and $h^{(i)}$ the initial form of $h$. Following the notations of Section \ref{SS.resolution}, we get
\[
\begin{split}
h^{(i)}(x,y)\alpha(x,y)=& \; y\left(\frac{\partial f^{(m)}}{\partial y}g^{(m)}-f^{(m)}\frac{\partial g^{(m)}}{\partial y}\right)+x\left(\frac{\partial f^{(m)}}{\partial x}g^{(m)}-f^{(m)}\frac{\partial g^{(m)}}{\partial x}\right)\\
=& \left( x \frac{\partial f^{(m)}}{\partial x}+y\frac{\partial f^{(m)}}{\partial y}\right)g^{(m)}- \left( x \frac{\partial g^{(m)}}{\partial x}+y\frac{\partial g^{(m)}}{\partial y}\right)f^{(m)}\\
=& \; mf^{(m)}g^{(m)}-mf^{(m)}g^{(m)}=0,
\end{split}
\]
which, by Lemma \ref{el1uno}, proves that $P$ is an infinitely near dicritical singularity of $\XL$.

To finish our proof, suppose that $P$ is not dicritical with respect to $\mathcal L$. Then there exists an element of $\mathcal L$ whose strict transform at $P$ is defined by an equation $g(x,y)=0$ such that the multiplicity of $g$ at $P$ is $n>m$. Now, by repeating the same computation as before, it happens that
$$h^{(i)}(x,y) \alpha(x,y)=mf^{(m)}g^{(n)}-nf^{(m)}g^{(n)}=(m-n)f^{(m)}g^{(n)}\not=0.$$
Hence $P$ is not an infinitely near dicritical singularity of $\XL$.
\end{proof}

As a consequence of the above proposition, the following result holds.
\begin{cor}\label{corollary1}
Let $\mathcal L$ be a pencil given by two homogeneous polynomials  of the same degree without common components. Then $\mathcal{BP}(\mathcal L)=\mathcal{D}(\XL)$.
\end{cor}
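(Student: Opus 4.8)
The plan is to deduce Corollary \ref{corollary1} directly from the immediately preceding proposition, which asserts that for a base point $P$ of the pencil $\mathcal L$, being dicritical with respect to $\mathcal L$ is equivalent to being an infinitely near dicritical singularity of $\XL$. First I would recall the two definitions that are being compared. On one side, $\mathcal{BP}(\mathcal L)$ is the configuration of base points of the pencil, and by the description in Section \ref{sec6} a point $P\in\mathcal{BP}(\mathcal L)$ is dicritical with respect to $\mathcal L$ exactly when the parameter $r:=m_P-d$ is positive. On the other side, $\mathcal D(\XL)$ is, by definition, the set of points of $\mathcal S(\XL)$ that are either infinitely near dicritical singularities themselves or have such a singularity infinitely near to them.

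The core of the argument is a double inclusion. For the inclusion $\mathcal{BP}(\mathcal L)\subseteq\mathcal D(\XL)$, I would take $P\in\mathcal{BP}(\mathcal L)$ and distinguish two cases according to whether $P$ is dicritical with respect to $\mathcal L$. If it is, the preceding proposition immediately gives that $P$ is an infinitely near dicritical singularity of $\XL$, hence $P\in\mathcal D(\XL)$. If $P$ is not dicritical with respect to $\mathcal L$, then by Remark \ref{nota1} $P$ is not a maximal point of $\mathcal{BP}(\mathcal L)$ under the ordering ``to be infinitely near to'', so there is a point of $\mathcal{BP}(\mathcal L)$ strictly infinitely near to $P$; iterating, one reaches a \emph{maximal} point $Q$ of $\mathcal{BP}(\mathcal L)$ infinitely near to $P$, and again by Remark \ref{nota1} such a maximal $Q$ is dicritical with respect to $\mathcal L$. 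The proposition then makes $Q$ an infinitely near dicritical singularity of $\XL$, and since $Q$ is infinitely near to $P$ this places $P$ in $\mathcal D(\XL)$ by the very definition of the dicritical configuration. This also uses the fact that the base points of $\mathcal L$ are exactly the ordinary (in particular non-reduced) singularities encountered in the resolution, so that $\mathcal{BP}(\mathcal L)\subseteq\mathcal S(\XL)$; this identification is what Lemma \ref{el1uno} guarantees, since blowing up and passing to the associated $1$-form commute, so the resolution of the pencil and the reduction of singularities of $\XL$ proceed through the same infinitely near points.

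For the reverse inclusion $\mathcal D(\XL)\subseteq\mathcal{BP}(\mathcal L)$, I would take $P\in\mathcal D(\XL)$. By definition there is a point $Q$ infinitely near to $P$ that is a dicritical singularity of the strict transform of $\XL$. Because $\XL$ is by construction the vector field whose leaves are the curves of the pencil, every point appearing in its reduction of singularities lies on the resolution of the pencil, and the commutation of Lemma \ref{el1uno} again identifies these infinitely near points with points of $\mathcal{BP}(\mathcal L)$; thus $Q\in\mathcal{BP}(\mathcal L)$, and since the base point configuration is closed under predecessors in the ordering ``to be infinitely near to'', the point $P$, which $Q$ is infinitely near to, also lies in $\mathcal{BP}(\mathcal L)$. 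Combining the two inclusions yields the equality $\mathcal{BP}(\mathcal L)=\mathcal D(\XL)$ as sets; since the proposition matched the dicritical property point-by-point, the configurations coincide.

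The step I expect to be the main obstacle is the careful bookkeeping that the two configurations live on the \emph{same} tower of blow-ups, so that ``infinitely near'' means the same thing on both sides. The proposition compares dicriticalness at a single base point, but Corollary \ref{corollary1} is a statement about entire configurations, which includes the non-dicritical intermediate points and their proximity relations. The honest work is therefore to argue that the resolution of the pencil and the reduction of singularities of $\XL$ blow up exactly the same centers in the same order; this is precisely what Lemma \ref{el1uno} provides, since it shows the strict transform of $\omega_{\mathcal L}$ agrees with the $1$-form associated to the transformed pencil at each stage, so the singular locus of the transformed field coincides with the base locus of the transformed pencil at every step. Once this identification is in hand, the equality of the point sets is automatic and the matching of the dicritical labels is exactly the content of the preceding proposition.
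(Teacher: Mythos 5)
Your overall strategy---deducing the equality by a double inclusion from the preceding proposition together with Remark \ref{nota1}---is exactly the route the paper intends (its entire proof is the phrase ``as a consequence of the above proposition''), and your argument for the inclusion $\mathcal{BP}(\mathcal L)\subseteq\mathcal{D}(\XL)$ is essentially correct: climb from $P$ to a maximal point of $\mathcal{BP}(\mathcal L)$ infinitely near to it, note that this maximal point is dicritical with respect to $\mathcal L$ by Remark \ref{nota1}, and apply the proposition.

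The reverse inclusion, however, rests on a false intermediate claim. You assert that ``every point appearing in the reduction of singularities of $\XL$ lies on the resolution of the pencil,'' i.e.\ that $\mathcal{S}(\XL)\subseteq\mathcal{BP}(\mathcal L)$ (and, in your final paragraph, that the two configurations coincide step by step), attributing this to Lemma \ref{el1uno}. That lemma only says that the strict transform of the associated $1$-form equals the $1$-form associated to the transformed local pencil; it does not identify the singular locus of the vector field with the base locus of the pencil, and in general these differ: $\XL$ has ordinary singularities at singular points of \emph{individual} fibers, which need not be base points. The paper's own examples exhibit this: for the pencil generated by $X^2Z^3+Y^5$ and $Z^5$ the base configuration is $\{P\}\cup\{P_i\}_{i=1}^{13}$, whereas the singular configuration of the associated vector field (Example \ref{Ex.1}) also contains $Q=(0:0:1)$ and $Q_1,Q_2,Q_3$, none of which is a base point. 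Consequently you cannot apply the proposition to the dicritical point $Q$ infinitely near to a given $P\in\mathcal{D}(\XL)$: the proposition's hypothesis is that $Q$ is already a base point of $\mathcal L$, which is precisely what remains to be shown. The missing ingredient is the (true, but unproved in your text) statement that every infinitely near \emph{dicritical} singularity of $\XL$ belongs to $\mathcal{BP}(\mathcal L)$---for instance because a dicritical blow-up forces infinitely many distinct local leaves through the blown-up point, these leaves lie on pairwise distinct fibers of the pencil, and distinct fibers meet only along the base locus. Once that is supplied (and one observes that $\mathcal{BP}(\mathcal L)$ is closed under predecessors for the ordering ``to be infinitely near to''), your double inclusion closes; the same correction also lets you drop the inaccurate ``exactly'' in your justification that $\mathcal{BP}(\mathcal L)\subseteq\mathcal{S}(\XL)$, where only the one true inclusion is needed.
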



\section{Main results}\label{sec7}

\subsection{The main theorem. Poincar\'e problem and Algorithm \ref{alg1}}
In this section, unless otherwise stated, we shall assume that the vector field $\X$ has a WAI polynomial first integral and, as before, we shall denote by $\mathcal X$ the complex projectivization of $\X$. The existence of a WAI polynomial first integral implies that of a minimal one $H$, that will be what we always consider.



Keep the notations as in Section \ref{S:intro}. The rational function $\bar H$ is an equivalent datum to the pencil ${\mathcal P}_{\mathcal X}:=\mathbb{P}\langle F_1^{n_1} F_2^{n_2} \cdots F_r^{n_r},Z^n\rangle$
and, by \cite[Lemma 1]{g-m-1},
${\mathcal L}_n(\mathcal{BP}_{\mathcal X})={\mathcal P}_{\mathcal X}$,
where $\mathcal{BP}_{\mathcal X}$ denotes the cluster of base points of ${\mathcal P}_{\mathcal X}$. This means that one can compute the first integral $H$ from the integer number $n$ and the cluster $\mathcal{BP}_{\mathcal X}$. We shall show that the dicritical configuration ${\mathcal D}({\mathcal X})$ determines both data.

Next theorem is our first step. To prove it we shall use the B\'ezout-Noether Formula (see \cite[Corollary I.7.8]{hart} and \cite[Theorem 3.3.1]{C}) which, for two algebraic curves $C_1$  and $C_2$  on $\CP^2$, states that
\begin{equation}\label{eq.Noether}
\deg C_1\deg C_2=\sum_Q I_Q(C_1,C_2)=\sum_Pm_P(\tilde C_1)m_P(\tilde C_2),
\end{equation}
where $\tilde C_1$ and $\tilde C_2$ stand for the strict transforms of $C_1$ and $C_2$ in some manifold obtained by blowing-up, $Q$ (respectively, $P$) runs over the set $C_1\cap C_2$ (respectively, of infinitely near points to some $Q$ as above, $P$, such that $P\in \tilde C_1\cap\tilde C_2$) and $I_Q(C_1,C_2)$ denotes the intersection multiplicity at $Q$ of $C_1$ and $C_2$. In addition, we consider a system of multiplicities $\mathbf{m}({\mathcal C}, \mathcal{C}')$ attached with any pair of configurations of infinitely near points $\mathcal C$ and $\mathcal C'$ of $\mathbb{CP}^2$ such that $\mathcal{C}\subseteq \mathcal{C}'$. This is defined as $\mathbf{m}({\mathcal C}, \mathcal{C}'):=(m_Q)_{Q\in {\mathcal C'}}$, where $m_Q=1$  if $Q$ is a maximal point of $\mathcal C$, $m_Q=0$ if $Q\in \mathcal{C}'\setminus \mathcal{C}$ and $m_Q=\sum_P m_P$ otherwise, the sum running over the set of points $P\in {\mathcal C}$ such that $P$ is proximate to $Q$. Finally, set
\[
\mathrm{Fr}({\mathcal C}):=\{P\in {\mathcal C}| \mbox{ $P$ is a free point}\}
\]
and, for each $P\in {\mathcal C}$, define
$${\mathcal C}^P:=\{Q\in {\mathcal C} | \mbox{ $P$ is infinitely near to $Q$}\}.$$

\begin{teo}\label{prrr}
With the notations as in Section \ref{S:intro}, let ${\bf X}$ be a polynomial vector field having a WAI polynomial first integral $H=\prod_{i=1}^rf_i^{n_i}$ and  $\mathcal{X}$ its complex projectivization. Then:
\begin{itemize}

\item[(1)] The configurations of infinitely near points ${\mathcal D}({\mathcal X})$ and $\mathcal{BP}({\mathcal P}_{\mathcal X})$ coincide.

\item[(2)] $\DX$ has exactly $r$ maximal points with respect to the ordering ``to be infinitely near to", which we denote by $R_1, R_2, \ldots,R_r$. Moreover these maximal points are the unique infinitely near dicritical singularities  of ${\mathcal X}$.

\item[(3)] The set $\mathrm{Fr}({\mathcal D}({\mathcal X}))$ has exactly $r$ maximal elements and, for each $i\in \{1, 2, \ldots,r\}$, each point $R_i$ is infinitely near to one of these maximal elements, which we denote by $M_i$.

\item[(4)] For each $i\in \{1, 2, \ldots,r\}$, set $\mathbf{m}\big({\mathcal D}({\mathcal X})^{M_i},\DX\big)=(h^i_Q)$ the above defined system of multiplicities. Then, up to reordering of $\{1, 2, \ldots,r\}$, ${\mathcal D}({\mathcal X})^{M_i}$ is the set of points in ${\mathcal D}({\mathcal X})$ through which the strict transforms of the curve $C_i$, defined by $F_i=0$, pass. Moreover, for all $Q\in {\mathcal D}({\mathcal X})^{M_i}$, it holds that $\mathrm{mult}_Q(\tilde{C}_i)=h^i_Q$ and the degrees $d_i$ of the curves $C_i$ satisfy
\begin{equation}\label{inter}
d_i=\sum_{Q\in {\mathcal D}({\mathcal X})^{M_i}\cap \tilde{L}} h^i_Q,
\end{equation}
 where
${\mathcal D}({\mathcal X})^{M_i}\cap \tilde{L}$ is the set of points in ${\mathcal D}({\mathcal X})^{M_i}$ through which the strict transforms of the line of infinity pass.
\end{itemize}
\end{teo}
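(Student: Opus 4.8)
The plan is to exploit the dictionary between the vector field and the pencil set up in Section~\ref{sec6}. First I would note that, since $H$ is a first integral of $\X$, the complex projectivization $\mathcal X$ equals the vector field $\mathcal{X}_{\mathcal{P}_{\mathcal X}}$ attached to the pencil $\mathcal{P}_{\mathcal X}=\mathbb{P}\langle F_1^{n_1}\cdots F_r^{n_r},Z^n\rangle$; its two generators have degree $n$ and, because no $C_i$ is the line at infinity (each $C_i$ meets $L$ at a single point), they share no common component. Part~(1) is then immediate from Corollary~\ref{corollary1}, which gives $\mathcal{BP}(\mathcal{P}_{\mathcal X})=\mathcal D(\mathcal{X}_{\mathcal{P}_{\mathcal X}})=\DX$.

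For part~(2) I would first argue that, among base points, being \emph{maximal} for ``to be infinitely near to'', being \emph{dicritical} with respect to $\mathcal{P}_{\mathcal X}$, and being an \emph{infinitely near dicritical singularity} of $\mathcal X$ are equivalent. Remark~\ref{nota1} gives maximal $\Rightarrow$ dicritical; conversely, if $E_P$ is dicritical then by \eqref{v2} the generic member meets $E_P$ at a point that varies with the pencil parameter, so no point of the first neighbourhood of $P$ is a base point and $P$ is maximal. The Proposition preceding Corollary~\ref{corollary1} then matches dicritical base points with the infinitely near dicritical singularities of $\mathcal X$. The substantive point is the count. Here the one place at infinity hypothesis enters: the leading form of $f_i$ is $c_i\ell_i^{d_i}$ for a linear form $\ell_i$, so $C_i$ is unibranch at its unique point of $C_i\cap L$ and its strict transform $\tilde C_i$ --- a component of the zero fibre of the morphism $\varphi\colon X\to\CP^1$ of Section~\ref{sec6} --- meets the exceptional locus at a single point, lying on one dicritical divisor $E_{R_i}$. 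I would then show that distinct curves produce distinct divisors and that every dicritical divisor arises in this way, whence $\DX$ has exactly the $r$ maximal points $R_1,\dots,R_r$.

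Parts~(3) and~(4) are about the shape of the proximity tree and are where I would use the rigidity of a single place at infinity most heavily. Resolving the place at infinity of $C_i$ produces a chain of infinitely near points whose free/satellite pattern is governed by the characteristic exponents of the place; I would take $M_i$ to be the last free point of this chain, so that $R_i$ is infinitely near $M_i$ and the maximal elements of $\mathrm{Fr}(\DX)$ are exactly $M_1,\dots,M_r$, giving~(3). For~(4), I would identify $\DX^{M_i}$ (the ancestors of $M_i$ inside $\DX$) with the set of base points through which $\tilde C_i$ passes; since $C_i$ has a single place, the multiplicities $\mult_Q(\tilde C_i)$ satisfy the proximity equalities and equal $1$ from $M_i$ onward, so they coincide with the system $\mathbf m(\DX^{M_i},\DX)=(h^i_Q)$ by its very definition. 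The degree formula is then Noether's formula \eqref{eq.Noether} applied to $C_i$ and $L$: as $\tilde L$ has multiplicity one wherever it passes and $C_i\cap L$ is concentrated over the single point $P_i$, one obtains $d_i=\deg C_i\,\deg L=\sum_{Q\in\DX^{M_i}\cap\tilde L}h^i_Q$, i.e. \eqref{inter}.

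The hard part will be the counting and bijection in part~(2) together with the structural identification in part~(4): both rest on showing that the resolution of $\mathcal{P}_{\mathcal X}$ splits, combinatorially, into $r$ chains --- one per curve $C_i$ --- each terminating in a single dicritical divisor. This is exactly where one must use that the $C_i$ are unibranch at infinity (so each contributes one place, hence one dicritical divisor) together with the minimality of $H$ (which makes the pencil primitive, ruling out several $\tilde C_i$ sharing a dicritical component and forcing $\varphi$ to restrict to a degree-one map on each $E_{R_i}$).
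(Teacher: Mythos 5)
There is a genuine gap, concentrated in the step on which everything else in your parts (2)--(4) rests. You claim that for a base point $P$ of a pencil, ``dicritical with respect to the pencil'' implies ``maximal in $\mathcal{BP}(\mathcal{L})$'', arguing from \eqref{v2} that the generic member meets $E_P$ only at a moving point. This is false for general pencils: in \eqref{v2} the restriction of the generic virtual transform to $E_P$ is $D(1,y_1)\bigl(\alpha f_1^{(r)}(1,y_1)+\beta f_2^{(r)}(1,y_1)\bigr)$, and $P$ is dicritical as soon as $r=m_P-\deg D>0$, which is perfectly compatible with $D$ being non-constant; the roots of $D(1,t)$ are then base points in the first infinitesimal neighbourhood of $P$, so $P$ is dicritical but not maximal (e.g.\ the local pencil spanned by $xy$ and $y^2+x^3$ at the origin, where $D=y$ and $r=1$). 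The fact that for the pencil ${\mathcal P}_{\mathcal X}$ of a minimal WAI first integral every dicritical base point is maximal is precisely (a reformulation of) item (2) of the theorem; it is a consequence of the one-place-at-infinity hypothesis and cannot be extracted from \eqref{v2} alone. The paper obtains it in the opposite direction: it first establishes the combinatorial structure of $\mathcal{BP}({\mathcal P}_{\mathcal X})$ and then reads off which exceptional divisors are non-invariant.

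Relatedly, you explicitly defer ``the hard part'': that the resolution splits into $r$ chains, one per curve $C_i$, each consisting of the embedded resolution of the place of $C_i$ at infinity followed by a tail of satellite points ending in a single dicritical divisor, with the maximal free point $M_i$ sitting in that tail. This is not a detail to be filled in later; it is the substantive content of items (2)--(4), and the paper does not reprove it but imports it from \cite[Lemma 1]{c-p-r-2} applied to the pencils ${\mathcal P}_i=\mathbb{P}\langle F_i,Z^{d_i}\rangle$ (yielding the facts that each $\mathcal{BP}({\mathcal P}_i)$ has a unique maximal point $N_i$, that $\mathcal{BP}({\mathcal P}_{\mathcal X})$ is the union of the $\mathcal{BP}({\mathcal P}_i)$ together with chains $Q_{i,1},\ldots,Q_{i,k_i}$ hanging from the $N_i$, and that only the divisors $E_{Q_{i,k_i}}$ are not contracted by $\varphi$). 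It also uses the minimality of $H$ earlier than you do, to guarantee $N_i\neq N_j$. Your part (1) and the Noether computation for \eqref{inter} are fine and agree with the paper, but as written the proposal assumes the structural statement it needs to prove and replaces it with an incorrect general equivalence.
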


\begin{proof}
Statement (1) follows fom Corollary \ref{corollary1}.
We claim that the fact that we consider $H$ minimal proves the following statements:
\begin{itemize}
\item[(1)] $\gcd(n_1, n_2, \ldots,n_r)=1$.
\item[(2)] Either $r=1$ (and $n_1=1$), or $r\geq 2$ and there exists $i\in \{2, 3, \ldots,r\}$ such that $f_i-f_1\not\in \mathbb{C}$.
\end{itemize}
Indeed,  $\delta : =\gcd(n_1, n_2, \ldots,n_r)\neq 1$ implies that $H^{1/\delta}$ is also a first integral, which is a contradiction with the mentioned minimality of the first integral. To show (2), assume that $r\geq 2$ and, for all $i\in \{2, 3, \ldots,r\}$, $f_i= f_1 + \alpha_i$ for some $\alpha_i\in \mathbb{C}$, then  $H=T(f_1)$, where $T(t):=t^{n_1}\prod_{i=2}^r (t+\alpha_i)^{n_i}$; so $f_1$ is a first integral, which is also a contradiction.

Now consider the pencils  ${\mathcal P}_i:=\mathbb{P}\langle F_i, Z^{d_i}\rangle$, $1 \leq i \leq r$. From a careful reading of the statement and proof of  \cite[Lemma 1]{c-p-r-2}, we deduce the following facts:


(i) Each configuration $\mathcal{BP}({\mathcal P}_i)$ is contained into $\mathcal{BP}({\mathcal P}_{\mathcal X})$ and has exactly $1$ maximal point, which we denote by $N_i$. Moreover $N_i\not=N_j$ for $i \neq j$.


(ii) $\mathcal{BP}({\mathcal P}_{\mathcal X})=\cup_{i=1}^r {\mathcal C}_i$, where ${\mathcal C}_i=\mathcal{BP}({\mathcal P}_i)\cup \{Q_{i,1}, Q_{i,2}, \ldots, Q_{i,k_i}\}$, $Q_{i,1}$ belongs to the first infinitesimal neighborhood of $N_i$ and $Q_{i,j}$ belongs to the first infinitesimal neighborhood of $Q_{i,j-1}$ for $2 \leq j \leq k_i$.

(iii) The maximal point with respect to the proximity relation of ${\mathcal C}_i$, $1 \leq i \leq r$,   through which the strict transform of $C_i$ passes is the maximal free point of ${\mathcal C}_i$ (that we denote by $M_i$).
\smallskip

(iv) Let $\pi:X\rightarrow \mathbb{CP}^2$ be the composition of the blow-ups of the points of the configuration $\DX$ and let $\phi :\mathbb{CP}^2\cdots \rightarrow \mathbb{CP}^1$ be the rational map defined by $\bar H$ (see the paragraph above Proposition \ref{proposition2}). The exceptional divisors $E_P$ (with $P\in \DX$) are mapped by $\varphi=\phi\circ \pi$ to a point of $\mathbb{CP}^1$ with the exception of the divisors in the set $\{E_{Q_{i,k_i}}\}_{i=1}^r$, whose images are $\mathbb{CP}^1$.



For $1\leq i\leq r$, the composition of the blow-ups of the points in $\mathcal{BP}({\mathcal P}_i)$ provides an embedded resolution of the branch of $C_i$ at infinity and the strict transform of $C_i$ passes through $N_i$. Therefore  $M_i\in \{Q_{i,1}, Q_{i,2}, \ldots Q_{i,k_i}\}$ by our above assertion (iii). This implies that $M_i\neq M_j$ if $\neq j$. Then it is clear that the set of points in ${\mathcal D}({\mathcal X})$ through which the strict transforms of the curve $C_i$ pass is ${\mathcal D}({\mathcal X})^{M_i}$, with multiplicity $h^i_Q$ for all $Q\in{\mathcal D}({\mathcal X})^{M_i}$ and so (3) and the first statement in (4) are proved.

On the one hand, defining $R_i:=Q_{i,k_i}$, $1 \leq i \leq r$, it holds that $R_1, R_2, \ldots,R_r$ are the maximal elements of $\DX$. On the other hand, $\{\varphi^{-1}(\lambda)\}_{\lambda\in \mathbb{CP}^1}$ is the set of invariant curves of the strict transform of $\mathcal{X}$ at the manifold $X$ obtained after blowing-up the points in $\DX$ (see \cite{julio}, for instance). This means, by (iv), that the unique exceptional divisors in $X$ that are not invariant by the strict transform of $\mathcal{X}$ are $E_{R_i}$, $1\leq i\leq r$. Then, by Proposition \ref{nondicritical}, the points $R_i$, $1\leq i\leq r$, are the unique infinitely near dicritical singularities of $\mathcal X$. This proves (2).

Finally B\'ezout-Noether Formula for the curves $C_i$ and the line at infinity proves Equality (\ref{inter}), which concludes our proof.
\end{proof}









We next introduce some equalities that will be useful later on. For $\X$ as in Theorem \ref{prrr} and with the same notation, set $r_P:=m_P({\mathcal P}_{\mathcal X})$, for $P \in {\mathcal D}({\mathcal X})$. The first equation below follows from B\'ezout-Noether Formula \eqref{eq.Noether} for two generic curves of ${\mathcal P}_{\mathcal X}$. It relates the degree $n$ of the curves in ${\mathcal P}_{\mathcal X}$ (that is, the degree of the rational first integral of $\mathcal X$) and the multiplicities $r_P$ above defined:
\begin{equation}\label{Eq.0}
n^2=\sum_{P\in{\mathcal D}({\mathcal X})}r_P^2.
\end{equation}
The same formula with respect to a generic curve of ${\mathcal P}_{\mathcal X}$ and $C_i$, $1 \leq i \leq r$, gives rise to
\begin{equation}\label{Eq.1}
n \; d_i=\sum_{P\in{\mathcal D}({\mathcal X})}h_P^i \cdot r_P.
\end{equation}
Applying again the same formula \eqref{eq.Noether} to a generic curve of ${\mathcal P}_{\mathcal X}$ and the line of infinity $L$, we get
\begin{equation}\label{Eq.2}
n=\sum_{P\in{\mathcal D}({\mathcal X})\cap\tilde L}r_P.
\end{equation}

Finally, let us define ${\mathcal N}({\mathcal X})$ as the set of non-maximal points of the dicritical configuration $\DX$.

For any $Q\in {\mathcal N}({\mathcal X})$ and as a consequence of Item (2) of Theorem \ref{prrr} and Proposition \ref{proposition2}, we have
\begin{equation}\label{Eq.3}
r_Q=\sum_P r_P,
\end{equation}
where the sum runs over the points $P$ in $\DX$ which are proximate to $Q$.
\smallskip

By \cite[Lemma 1]{c-p-r-2} it holds that the strict transform of a generic element of the pencil ${\mathcal P}_{\mathcal X}$ at each free maximal point $M_i$ has a local equation of the type $\alpha u^{a_i}+\beta t^{\ell_i}$, where $u=0$ (respectively, $t=0$) is a local equation of the strict transform of $C_i$ at $M_i$ (respectively, the exceptional divisor), $a_i$ and $\ell_i$ being natural numbers. Then, straightforward computations involving Equality (\ref{Eq.3}) show that $r_{R_i}=\gcd(a_i,\ell_i)$ and, as a consequence, the following result happens.

\begin{lem}\label{bbb}
The greatest common divisor $\gcd(\{r_P\mid P\in \DX\})$  equals one.
\end{lem}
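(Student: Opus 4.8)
The plan is to exploit the structure revealed by Theorem \ref{prrr} together with the explicit description of the generic local equation at the maximal free points $M_i$. The goal is to show that $\gcd(\{r_P \mid P \in \DX\}) = 1$, and the natural strategy is to reduce this global gcd to information concentrated at the dicritical maximal points $R_i$.

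First I would record the key local fact, already quoted in the excerpt: by \cite[Lemma 1]{c-p-r-2}, at each free maximal point $M_i$ the strict transform of a generic element of ${\mathcal P}_{\mathcal X}$ has a local equation of the form $\alpha u^{a_i} + \beta t^{\ell_i}$, where $u = 0$ is a local equation of the strict transform of $C_i$ and $t=0$ is a local equation of the exceptional divisor. From this normal form one reads off that the multiplicity $r_{R_i}$ of the pencil at the subsequent point $R_i = Q_{i,k_i}$ equals $\gcd(a_i, \ell_i)$; this is exactly the computation the excerpt attributes to the proximity equality \eqref{Eq.3}. So I would take as given that $r_{R_i} = \gcd(a_i, \ell_i)$ for $1 \le i \le r$.

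The core step is then to argue that $\gcd(r_{R_1}, \ldots, r_{R_r}) = 1$, since this immediately forces the gcd over all of $\DX$ to be $1$ as well (the maximal points are a subset of $\DX$). Here I would translate the data $a_i, \ell_i$ back into the exponents $n_i$ and degrees $d_i$ of the first integral $H = \prod f_i^{n_i}$. The quantity $a_i$ should correspond, via the B\'ezout--Noether bookkeeping of \eqref{Eq.1} and the degree formula \eqref{inter}, to something governed by the exponent $n_i$, while $\ell_i$ is governed by the intersection of the exceptional divisor $E_{R_i}$ with the fibers, i.e.\ the local degree of the map $\varphi$ along $E_{R_i}$. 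The crucial input is minimality of $H$: the excerpt establishes in the proof of Theorem \ref{prrr} that $\gcd(n_1, \ldots, n_r) = 1$. I would aim to show that $r_{R_i}$ carries the factor $n_i$ (or at least that $\gcd(r_{R_1}, \ldots, r_{R_r})$ divides $\gcd(n_1,\ldots,n_r)$), so that the already-proven coprimality of the exponents yields the claim.

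The main obstacle will be making precise the relation between $r_{R_i} = \gcd(a_i, \ell_i)$ and the exponent $n_i$: one must untangle how the local exponents $a_i, \ell_i$ at the dicritical point relate to the product structure of $\bar H = (\prod F_i^{n_i})/Z^n$ as a map to $\CP^1$. Concretely, the fiber over $0$ is $\prod C_i^{n_i}$ and the fiber over $\infty$ is $n L$, and the exceptional divisor $E_{R_i}$ maps onto $\CP^1$ with some degree; pinning down that this forces $n_i \mid r_{R_i}$ requires care with the multiplicities along the successive blow-ups from $M_i$ to $R_i$. Once this divisibility is in hand, the argument closes: $\gcd(\{r_P \mid P \in \DX\}) \mid \gcd(r_{R_1}, \ldots, r_{R_r}) \mid \gcd(n_1, \ldots, n_r) = 1$, and the reverse inequality is trivial, so the gcd is exactly $1$.
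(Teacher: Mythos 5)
Your route is the paper's own: the lemma is stated there as an immediate consequence of the identity $r_{R_i}=\gcd(a_i,\ell_i)$ coming from the local normal form $\alpha u^{a_i}+\beta t^{\ell_i}$ at the free maximal points, and the link you are trying to supply (relating $a_i,\ell_i$ to the exponents $n_i$) is exactly the implicit step. Two remarks. First, you state the key divisibility in the wrong direction in the middle of your argument: you aim to show that ``$r_{R_i}$ carries the factor $n_i$'', i.e.\ $n_i\mid r_{R_i}$, but what your final chain $\gcd(r_{R_1},\dots,r_{R_r})\mid\gcd(n_1,\dots,n_r)$ requires --- and what is actually true --- is the opposite divisibility $r_{R_i}\mid n_i$. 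Second, the ``main obstacle'' you flag largely dissolves: in the normal form $\alpha u^{a_i}+\beta t^{\ell_i}$, setting $\beta=0$ yields the transform at $M_i$ of the member $\prod_j F_j^{n_j}$ of the pencil ${\mathcal P}_{\mathcal X}$; by Theorem \ref{prrr}(4) the only $C_j$ passing through $M_i$ is $C_i$ (with local equation $u=0$, smooth there since $M_i$ lies beyond the embedded resolution of the branch at infinity), it is carried with multiplicity $n_i$, and since no power of $t$ appears in the $\alpha$-term one gets $a_i=n_i$. Hence $r_{R_i}=\gcd(n_i,\ell_i)$ divides $n_i$, and $\gcd(\{r_P\mid P\in\DX\})\mid\gcd(r_{R_1},\dots,r_{R_r})\mid\gcd(n_1,\dots,n_r)=1$, the last equality being the minimality argument from the proof of Theorem \ref{prrr}. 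With the direction corrected and this identification made explicit, your argument is complete and coincides with the intended one.
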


Let $N$ be the cardinality of ${\mathcal D}({\mathcal X})$. We introduce the non-degenerated symmetric bilinear pairing over the vector space $\mathbb{R}^{N+1}$,   $\langle \cdot \rangle: \mathbb{R}^{N+1}\times \mathbb{R}^{N+1} \rightarrow \mathbb{R}$ such that if ${\bf a}=(a_0; (a_P)_{P\in \DX}),{\bf b}=(b_0; (b_P)_{P\in \DX})\in \mathbb{R}^{N+1}$, then
\begin{equation}
\label{bilineal}
\langle {\bf a}, {\bf b}\rangle :=a_0b_0-\sum_{P\in \DX} a_Pb_P.
\end{equation}
For $P\in \DX$, set
$${\bf e}_P:=(0; (m_Q^P)_{Q\in{\mathcal D}({\mathcal X})}),$$
where $m_Q^P$ equals $-1$ (respectively, $1$, $0$) if $Q=P$ (respectively, $Q$ is proximate to $P$, otherwise). It is not difficult to check that
\[
\langle {\bf e}_P, {\bf e}_P\rangle<0\; \mbox{ and }\;\langle {\bf e}_P, {\bf e}_Q\rangle\in \{0,1\} \;\mbox{for all $P,Q\in \DX$ such that $P\not=Q$}.
\]
In addition, equalities \eqref{Eq.1} and \eqref{Eq.3} mean that the vector $\left(n; (r_P)_{P\in {\mathcal D}({\mathcal X})}\right)\in \mathbb{R}^{N+1}$ belongs to the orthogonal complement (with respect to the above defined bilinear pair) of the subspace of $\mathbb{R}^{N+1}$ spanned by the set
\begin{equation}
\label{ctoese}
S:=\left\{{\bf c}_i:=(d_i; (h^i_P)_{P\in{\mathcal D}({\mathcal X})})\right\}_{i=1}^r\cup \left\{{\bf e}_Q\right\}_{Q\in{\mathcal N}({\mathcal X})}.
\end{equation}
Notice that the cardinality of $S$ is $N$.

In the sequel and for any tuple $\mathbf{m}=(m_0,(m_P)_{P\in \DX})\in \mathbb{N}^{N+1}$, we shall write $\mathcal{L}(\mathbf{m})$ instead of $\mathcal{L}_{m_0}(\DX,(m_P)_{P\in \DX})$. Using this notation, we state the following result:

\begin{lem}\label{lema3}
Let $Q\in \DX$ and  $\mathbf{m}=\left(m_0,(m_P)_{P\in \DX}\right)\in \mathbb{N}^{N+1}$.
Then $\mathcal{L}(\mathbf{m})\subseteq \mathcal{L}(\mathbf{m}+\mathbf{e}_Q)$.
\end{lem}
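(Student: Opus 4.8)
The plan is to unravel the definition of virtual passage and compare, point by point, the virtual transforms of a fixed curve $C$ with respect to the two clusters $(\DX,\mathbf{m})$ and $(\DX,\mathbf{m}+\mathbf{e}_Q)$. Write $\mathbf{m}'=\mathbf{m}+\mathbf{e}_Q$. Since $\mathbf{e}_Q$ has a zero in the degree coordinate, both $\mathcal{L}(\mathbf{m})$ and $\mathcal{L}(\mathbf{m}')$ are linear subsystems of curves of the same degree $m_0$; and because $\N$ starts at $1$, the only nonpositive entry that can appear in $\mathbf{m}'$ is $m_Q-1\ge 0$ (a virtual multiplicity $0$ imposes no condition), while $m_R$ is replaced by $m_R+1$ for every point $R$ proximate to $Q$ and all remaining entries are unchanged. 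Thus, fixing $C\in\mathcal{L}(\mathbf{m})$, it suffices to prove that $C$ passes virtually through every $P\in\DX$ with respect to $\mathbf{m}'$. For a point $P$ that is not infinitely near to $Q$, neither the virtual transform $C^{\mathbf{m}'}_P$ nor the required multiplicity $m'_P$ differs from the $\mathbf{m}$ version, so there is nothing to check; the whole matter concerns $P=Q$ and the descendants of $Q$.

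The key step, and the main obstacle, is the following identity, which I would prove by induction along the chain of points of $\DX$ infinitely near to $Q$: for every descendant $P\neq Q$ of $Q$ one has
\[
C^{\mathbf{m}'}_P = u\cdot C^{\mathbf{m}}_P,
\]
where $u=0$ is a local equation at $P$ of the strict transform of the exceptional divisor $E_Q$ (so $u$ is a unit exactly when $P$ is not proximate to $Q$). For the base case, when $P$ lies in the first infinitesimal neighborhood of $Q$, the virtual transform at $P$ is obtained by stripping $x^{m_Q}$ (resp. $x^{m_Q-1}$) from the same pulled-back equation, so $C^{\mathbf{m}'}_P=x\cdot C^{\mathbf{m}}_P$, and $x=0$ is precisely the local equation of $E_Q$ through $P$. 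For the inductive step across the blow-up of a parent $P'$, substituting $y=x(t+\lambda)$ turns the factor $u$ attached to $P'$ into $x^{\mu}u'$, where $\mu\in\{0,1\}$ equals $1$ exactly when $P'$ is proximate to $Q$ (smoothness of the strict transform of $E_Q$) and $u'=0$ is a local equation of that same strict transform at $P$; simultaneously the exponent stripped at $P'$ changes from $m_{P'}$ to $m'_{P'}=m_{P'}+\mu$, so the net power of $x$ is $m_{P'}-m'_{P'}+\mu=0$. This exact cancellation is what makes the identity go through, and it is the delicate bookkeeping to get right.

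Granting the identity, the conclusion is immediate by comparing multiplicities at each $P$. For a descendant $P\neq Q$ we obtain $m_P\!\left(C^{\mathbf{m}'}_P\right)=m_P\!\left(C^{\mathbf{m}}_P\right)+\epsilon_P$, where $\epsilon_P=m_P(u)$ equals $1$ if $P$ is proximate to $Q$ and $0$ otherwise; since $C\in\mathcal{L}(\mathbf{m})$ gives $m_P\!\left(C^{\mathbf{m}}_P\right)\ge m_P$, it follows that $m_P\!\left(C^{\mathbf{m}'}_P\right)\ge m_P+\epsilon_P=m'_P$, exactly matching the raised requirement at the points proximate to $Q$. At $P=Q$ the ancestors of $Q$ are untouched, so $C^{\mathbf{m}'}_Q=C^{\mathbf{m}}_Q$, whereas the requirement drops to $m'_Q=m_Q-1$, so passage is trivial. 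Hence $C$ passes virtually through $(\DX,\mathbf{m}')$ and $\mathcal{L}(\mathbf{m})\subseteq\mathcal{L}(\mathbf{m}+\mathbf{e}_Q)$. Conceptually this reflects that, under the correspondence between $\R^{N+1}$ and the intersection lattice of the blown-up surface, $\mathbf{e}_Q$ represents the strict transform of $E_Q$, so passing from $\mathbf{m}$ to $\mathbf{m}+\mathbf{e}_Q$ adds an effective divisor to the virtual transform and therefore cannot destroy effectivity.
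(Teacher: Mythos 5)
Your proof is correct and follows essentially the same route as the paper's: both identify the virtual transform with respect to $\mathbf{m}+\mathbf{e}_Q$ at the first infinitesimal neighborhood of $Q$ as $x\cdot C^{\mathbf{m}}_T$ with $x=0$ the equation of $E_Q$, and then observe that this extra factor raises the multiplicity by one exactly at the points proximate to $Q$. Your inductive bookkeeping of how the factor $u$ propagates through successive blow-ups simply makes explicit what the paper dismisses as clear, so there is nothing to add.
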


\begin{proof}
Consider the clusters $\mathcal{K}:=\left(\DX,(m_P)_{P\in \DX}\right)$ and  $\mathcal{K}':=\left(\DX,(m'_P)_{P\in \DX}\right)$, where $m'_P=m_P-1$ if $P=Q$; $m'_P=m_P+1$ if $P$ is proximate to $Q$; and $m'_P=m_P$ otherwise. Let $x,y$ be local coordinates at a point $T\in \DX$ in the first infinitesimal neighborhood of $Q$  and let $f(x,y)=0$ be the local equation of the virtual transform at $T$ of a curve $C$ in $\mathcal{L}(\mathbf{m})$ with respect to the cluster $\mathcal{K}$. Then, the virtual transform at $T$ of $C$ with respect to $\mathcal{K}'$ is $xf(x,y)$, where $x=0$ is assumed to be the equation of $E_Q$. Moreover it is clear that the new factor $x$ increases in one unit the multiplicity of the virtual transform at any point proximate to $Q$ and different from $T$. Therefore $C$ belongs to $\mathcal{L}(\mathbf{m}+\mathbf{e}_Q)$.
\end{proof}

With notations as before, set
\begin{equation}
\label{elr}
\mathbf{r} := \left(  n; (r_P)_{P\in \DX}   \right).
\end{equation}

The following properties are key facts for our main results. The first one is \cite[Lemma 1]{g-m-1} and is stated without proof.

\begin{lem}\label{lema4}
$\mathcal{L}({\mathbf r})={\mathcal P}_{\mathcal X}$.
\end{lem}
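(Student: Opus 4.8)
The plan is to reduce the assertion to the identity $\mathcal{L}_n(\mathcal{BP}_{\mathcal X})={\mathcal P}_{\mathcal X}$ recalled at the beginning of this section. Indeed, by definition $\mathcal{L}({\mathbf r})=\mathcal{L}_n\big(\DX,(r_P)_{P\in\DX}\big)$, and $r_P=m_P({\mathcal P}_{\mathcal X})=\mathrm{mult}_P({\mathcal P}_{\mathcal X})$. Item (1) of Theorem \ref{prrr} identifies $\DX$ with the configuration $\mathcal{BP}({\mathcal P}_{\mathcal X})$, while the numbers $r_P$ are by construction the base-point multiplicities of ${\mathcal P}_{\mathcal X}$; hence the cluster $\big(\DX,(r_P)_{P\in\DX}\big)$ is precisely $\mathcal{BP}_{\mathcal X}$ and $\mathcal{L}({\mathbf r})=\mathcal{L}_n(\mathcal{BP}_{\mathcal X})$. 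This is exactly the statement of \cite[Lemma 1]{g-m-1}, so the result follows. Below I sketch how one would prove the two inclusions directly, in case a self-contained argument is preferred.

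The inclusion ${\mathcal P}_{\mathcal X}\subseteq\mathcal{L}({\mathbf r})$ is immediate from the definition of the cluster of base points: every member of the pencil has degree $n$ and passes virtually through each $P\in\DX$ with multiplicity at least $\mathrm{mult}_P({\mathcal P}_{\mathcal X})=r_P$, hence lies in $\mathcal{L}({\mathbf r})$.

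For the reverse inclusion I would work on the surface $X$ obtained by blowing up the points of $\DX$ and use the intersection pairing \eqref{bilineal}, which in the virtual-multiplicity coordinates computes intersection numbers on $X$. Let $\tilde C$ be the strict transform of a generic member of ${\mathcal P}_{\mathcal X}$; by \eqref{Eq.0} its associated vector ${\mathbf r}$ satisfies $\langle{\mathbf r},{\mathbf r}\rangle=n^2-\sum_P r_P^2=0$, and $\tilde C$ is a fiber of the morphism $\varphi:X\to\CP^1$ of Proposition \ref{proposition2}, hence nef. Given $C'\in\mathcal{L}({\mathbf r})$ with strict transform $\tilde C'$ and vector ${\mathbf s}=(n;(s_P))$ of degree and virtual multiplicities, the passage conditions $s_P\ge r_P$ give $\tilde C\cdot\tilde C'=\langle{\mathbf r},{\mathbf s}\rangle=n^2-\sum_P r_P s_P\le 0$, while nefness of $\tilde C$ and effectivity of $\tilde C'$ give $\tilde C\cdot\tilde C'\ge 0$. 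Thus $\sum_P r_P(s_P-r_P)=0$, forcing $s_P=r_P$ for all $P$ (all base-point multiplicities being positive), so $\tilde C'$ is an effective divisor linearly equivalent to the fiber $\tilde C$.

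The main obstacle is the final step: concluding from the fact that $\tilde C'$ is an effective divisor linearly equivalent to the fiber $\tilde C$ that $\tilde C'$ is itself a fiber of $\varphi$, i.e. that $C'$ belongs to ${\mathcal P}_{\mathcal X}$. This requires that the complete linear system $|\tilde C|$ coincide with the pencil of fibers, equivalently $h^0(X,\mathcal O_X(\tilde C))=2$, which one obtains from the fact that $\varphi$ has base $\CP^1$ and connected generic fibers, as set up around Proposition \ref{proposition2}. Granting this, $\tilde C'$ is one of the fibers of $\varphi$ and therefore $C'\in{\mathcal P}_{\mathcal X}$, completing the proof.
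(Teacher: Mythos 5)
Your proof is correct and is essentially the paper's own: the authors state Lemma~\ref{lema4} without proof precisely because, after identifying the cluster $\left(\DX,(r_P)_{P\in\DX}\right)$ with $\mathcal{BP}_{\mathcal X}$ via Item (1) of Theorem~\ref{prrr} and the definition $r_P=m_P(\mathcal{P}_{\mathcal X})$, it is the identity $\mathcal{L}_n(\mathcal{BP}_{\mathcal X})=\mathcal{P}_{\mathcal X}$ of \cite[Lemma 1]{g-m-1} recalled at the start of Section~\ref{sec7}. Your supplementary two-inclusion sketch is extra material the paper does not attempt, and its delicate points (nefness, $h^0(X,\mathcal{O}_X(\tilde{C}))=2$, and equating virtual with effective multiplicities) are exactly what the citation is meant to absorb.
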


\begin{lem}\label{agh}
Let $C$ be a curve in $\mathbb{CP}^2$. Then, $C$ is invariant by $\mathcal X$ if and only if $\langle \mathbf{r}, \mathbf{c}\rangle=0$, where $\mathbf{c}=(d := \deg C; (\mult_P(\tilde{C}))_{P\in \DX})$.

\end{lem}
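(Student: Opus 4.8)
The plan is to read the abstract bilinear pairing $\langle\cdot,\cdot\rangle$ as the intersection form on the rational surface $X$ obtained by blowing up the points of $\DX$, and then translate the condition $\langle\mathbf{r},\mathbf{c}\rangle=0$ into a statement about the fibers of the morphism $\varphi:X\to\mathbb{CP}^1$ induced by the pencil. Concretely, let $\pi:X\to\mathbb{CP}^2$ be the composition of the blow-ups at the points of $\DX$ and, for each $P\in\DX$, let $E_P^*$ denote the total transform on $X$ of the exceptional divisor created at $P$. Together with the pullback $\pi^*\ell$ of a line, these classes form a basis of $\mathrm{Pic}(X)\otimes\mathbb{R}$ in which the intersection product is diagonal, namely $(\pi^*\ell)^2=1$, $\pi^*\ell\cdot E_P^*=0$ and $E_P^*\cdot E_Q^*=-\delta_{PQ}$; see \cite{C}. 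Since the class of the strict transform of a plane curve $D$ is $\deg D\,\pi^*\ell-\sum_{P\in\DX}\mult_P(\tilde{D})E_P^*$, the pairing \eqref{bilineal} is exactly this intersection product written in coordinates. In particular, by Lemma \ref{lema4} the vector $\mathbf{r}$ of \eqref{elr} is the class of the strict transform $\tilde{G}$ of a generic member of the pencil ${\mathcal P}_{\mathcal X}$ (its effective multiplicities at $\DX$ equal the virtual ones $r_P$), and $\mathbf{c}$ is the class of $\tilde{C}$, so that $\langle\mathbf{r},\mathbf{c}\rangle=\tilde{G}\cdot\tilde{C}$.

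The next step records this identity in the language already used in the paper. Applying the B\'ezout--Noether Formula \eqref{eq.Noether} to $C$ and a generic member $G$ of ${\mathcal P}_{\mathcal X}$, and splitting the resulting sum according to whether the infinitely near point lies in $\DX$ or not, one gets $\deg C\cdot n=\sum_{P\in\DX}\mult_P(\tilde{C})\,r_P+\tilde{G}\cdot\tilde{C}$, because $\tilde{G}$ passes through the points of $\DX=\mathcal{BP}({\mathcal P}_{\mathcal X})$ with multiplicities $r_P$, while the remaining common points account precisely for the intersection of $\tilde{G}$ and $\tilde{C}$ on $X$. This is just the restatement $\langle\mathbf{r},\mathbf{c}\rangle=\tilde{G}\cdot\tilde{C}$; and since $\tilde{G}$ is a generic curve it shares no component with $\tilde{C}$, so $\langle\mathbf{r},\mathbf{c}\rangle\geq 0$, with equality if and only if $\tilde{G}$ and $\tilde{C}$ are disjoint on $X$.

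It then remains to identify disjointness from a generic member with invariance. The generic member $\tilde{G}$ is a generic fiber of the holomorphic map $\varphi:X\to\mathbb{CP}^1$ attached to the pencil (see the paragraph above Proposition \ref{proposition2}), and the fibers of $\varphi$ are exactly the invariant curves of the strict transform of $\mathcal{X}$ on $X$ (see \cite{julio} and the proof of Theorem \ref{prrr}). If $C$ is invariant by $\mathcal{X}$, then every component of $\tilde{C}$ is a component of a fiber of $\varphi$; since distinct fibers are disjoint, $\tilde{C}$ meets no generic fiber, whence $\langle\mathbf{r},\mathbf{c}\rangle=\tilde{G}\cdot\tilde{C}=0$. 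Conversely, if $C$ is not invariant, some component of $\tilde{C}$ is not contained in any fiber, so $\varphi$ restricts to a dominant map on it; this component meets every fiber of $\varphi$, in particular the generic one, and therefore $\langle\mathbf{r},\mathbf{c}\rangle=\tilde{G}\cdot\tilde{C}>0$. This yields the desired equivalence.

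I expect the main obstacle to be the careful justification of the first paragraph: verifying that the abstract pairing \eqref{bilineal} coincides with the intersection form in the total-transform basis (equivalently, that $\tilde{G}\cdot\tilde{C}$ is correctly computed by \eqref{eq.Noether} after separating the contributions at $\DX$), and checking that no exceptional component spuriously enters the generic fiber $\tilde{G}$, so that $\tilde{G}$ genuinely represents the class $\mathbf{r}$. Once this dictionary is in place, the equivalence follows formally from the fact that a curve on $X$ is disjoint from a generic fiber of $\varphi$ precisely when each of its components is contained in a fiber.
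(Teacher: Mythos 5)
Your proposal is correct and follows essentially the same route as the paper's proof: both reduce invariance of $C$ to the statement that $\tilde{C}$ is disjoint on $X$ from the strict transform of a generic member of ${\mathcal P}_{\mathcal X}$ (equivalently, from a generic fiber of $\varphi$), and both convert that disjointness into $\langle \mathbf{r},\mathbf{c}\rangle=0$ via the B\'ezout--Noether Formula split over the points of $\DX=\mathcal{BP}({\mathcal P}_{\mathcal X})$. The paper merely states this more tersely (after reducing to $C$ irreducible), while you make the intersection-form dictionary and the fiberwise argument explicit; the substance is the same.
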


\begin{proof}
Without loss of generality we can assume that $C$ is reduced and irreducible. Let $\pi:X\rightarrow \mathbb{CP}^2$ be the composition of blowing-ups of the points in $\mathcal{BP}(\mathcal{P}_{\mathcal{X}})$. Statement (1) of Theorem \ref{prrr} shows that $\mathcal{BP}(\mathcal{P}_{\mathcal{X}})=\DX$. So, $C$ is an invariant curve of $\mathcal{X}$ if and only if it is a component of some curve in the pencil $\mathcal{P}_{\mathcal{X}}$, that is, if and only if the strict transform $\tilde{C}$ on $X$ does not meet the strict transform of a generic curve $D$ of the pencil (see the paragraph below Remark \ref{nota1}). This concludes our statement because it is equivalent to B\'ezout-Noether Formula for the curves $C$ and $D$ over the points in $\DX$.
\end{proof}

\begin{lem}\label{independent}
The  set $S \subseteq \mathbb{R}^{N+1}$ defined in (\ref{ctoese}) is linearly independent.
\end{lem}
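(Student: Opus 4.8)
The statement to prove is that the set
\[
S=\left\{{\bf c}_i=(d_i; (h^i_P)_{P\in \DX})\right\}_{i=1}^r\cup \left\{{\bf e}_Q\right\}_{Q\in {\mathcal N}({\mathcal X})}
\]
is linearly independent in $\R^{N+1}$, where $N=\#\DX$ and $\#S=N$ as noted.

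The plan is to split $S$ into its two constituent families and argue that neither can contribute a nontrivial relation. First I would treat the vectors ${\bf e}_Q$, $Q\in {\mathcal N}({\mathcal X})$, on their own. Their zeroth coordinate vanishes, and their remaining coordinates $(m_R^Q)_{R\in\DX}$ form a matrix that is (up to sign) the transpose of the proximity matrix restricted to the non-maximal points: the $Q$-column has $-1$ in the $Q$-entry and $+1$ in the entries of points proximate to $Q$. Since the proximity relation is compatible with the partial order ``to be infinitely near to'', one can order the points $Q\in{\mathcal N}({\mathcal X})$ so that this matrix is triangular with $-1$ on the diagonal (a point $Q$ is proximate only to points that precede it). Hence the $\{{\bf e}_Q\}_{Q\in{\mathcal N}({\mathcal X})}$ are already linearly independent among themselves. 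The natural tool here is the nondegenerate bilinear pairing $\langle\cdot,\cdot\rangle$ from \eqref{bilineal}, for which it is recorded that $\langle {\bf e}_P,{\bf e}_P\rangle<0$ and $\langle {\bf e}_P,{\bf e}_Q\rangle\in\{0,1\}$; these properties say the Gram matrix of the ${\bf e}_Q$ is diagonally dominant (negative definite), giving independence cleanly.

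Next I would incorporate the ${\bf c}_i$. The key structural input is Theorem \ref{prrr}(4): the support of ${\bf c}_i$, i.e. the set of points where $h^i_P\neq0$, is ${\mathcal D}({\mathcal X})^{M_i}$, the points through which $\tilde C_i$ passes, and in particular ${\bf c}_i$ has multiplicity $h^i_{R_i}=r_{R_i}>0$ at the maximal (dicritical) point $R_i$, which lies in the support of no ${\bf e}_Q$ (since $R_i$ is maximal, hence not in ${\mathcal N}({\mathcal X})$) and of no ${\bf c}_j$ with $j\neq i$ (by Theorem \ref{prrr}(4), the supports ${\mathcal D}({\mathcal X})^{M_i}$ meet the maximal points in exactly the single point $R_i$, with $M_i\neq M_j$ and $R_i\neq R_j$). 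So suppose a relation $\sum_i\lambda_i{\bf c}_i+\sum_Q\mu_Q{\bf e}_Q=0$. Reading off the $R_i$-coordinate: only ${\bf c}_i$ contributes there (the other ${\bf c}_j$ vanish at $R_i$ and every ${\bf e}_Q$ vanishes at the maximal point $R_i$), giving $\lambda_i\,r_{R_i}=0$, hence $\lambda_i=0$ for every $i$. What remains is $\sum_Q\mu_Q{\bf e}_Q=0$, which forces all $\mu_Q=0$ by the independence of the ${\bf e}_Q$ established in the first step. This completes the argument.

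The step I expect to require the most care is verifying cleanly that each $R_i$ sees a nonzero entry only from its own ${\bf c}_i$. For the ${\bf e}_Q$ this is immediate since their zeroth coordinate is $0$ and $R_i$ is maximal so $R_i\notin{\mathcal N}({\mathcal X})$, meaning no ${\bf e}_Q$ is even indexed at a point that would place a nonzero entry in the $R_i$-slot unless $R_i$ is proximate to some $Q\in{\mathcal N}({\mathcal X})$; here I would use that $R_i$ being a maximal free-to-dicritical endpoint still could be proximate to an earlier point, so the correct reason that ${\bf e}_Q$ has a zero in the $R_i$-coordinate is only guaranteed by evaluating the $R_i$-entry $m_{R_i}^Q$, which is nonzero exactly when $R_i=Q$ or $R_i$ is proximate to $Q$. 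Since $R_i\notin{\mathcal N}({\mathcal X})$ we have $R_i\neq Q$, so the potential obstruction is the proximity $R_i\to Q$. I would resolve this by instead reading the relation at the zeroth coordinate together with the triangular structure, or more robustly by pairing the whole relation against ${\bf r}$: by Lemma \ref{agh} and \eqref{Eq.1}, $\langle{\bf r},{\bf c}_i\rangle=0$ since $C_i$ is invariant, and by \eqref{Eq.3}, $\langle{\bf r},{\bf e}_Q\rangle=0$ for $Q\in{\mathcal N}({\mathcal X})$; thus the whole span $\langle S\rangle$ is ${\bf r}$-orthogonal and of dimension $\le N$, while ${\bf r}$ has nonzero pairing with itself (indeed $\langle{\bf r},{\bf r}\rangle=n^2-\sum r_P^2$, controlled by \eqref{Eq.0}), so ${\bf r}\notin\langle S\rangle^{\perp\perp}$ considerations give the final dimension count. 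In practice the cleanest route is the direct ordering argument: order $\DX$ by the infinitely-near relation, list ${\bf e}_Q$ first and then the ${\bf c}_i$ with their leading entries at the $R_i$, and observe that in this basis the coefficient matrix is block lower-triangular with invertible diagonal blocks, so $\det\neq0$.
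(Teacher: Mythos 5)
There is a genuine gap, and it sits exactly where you yourself flagged the difficulty. Your main argument reads off the $R_i$-coordinate of a putative relation and claims only $\mathbf{c}_i$ contributes there; but $\mathbf{e}_Q$ has entry $+1$ in the $R_i$-slot whenever $R_i$ is proximate to $Q$, and this \emph{always} happens for at least the immediate predecessor of $R_i$ (which lies in ${\mathcal N}({\mathcal X})$ since $R_i$ is maximal). So the $R_i$-coordinate does not isolate $\lambda_i$. None of the three repairs you sketch closes this: (i) the zeroth coordinate only yields $\sum_i\lambda_i d_i=0$; (ii) pairing the relation against $\mathbf{r}$ gives $0=0$, because \emph{every} element of $S$ is $\mathbf{r}$-orthogonal by (\ref{Eq.1}) and (\ref{Eq.3}) --- and your assertion that $\mathbf{r}$ has nonzero self-pairing is false, since (\ref{Eq.0}) says precisely $n^2=\sum_P r_P^2$, i.e.\ $\langle\mathbf{r},\mathbf{r}\rangle=0$; (iii) the $N\times N$ matrix you describe is \emph{not} block lower-triangular, because the block recording the entries of the $\mathbf{e}_Q$ at the columns $R_1,\ldots,R_r$ is nonzero for the same proximity reason as above, so its determinant is not the product of the diagonal blocks. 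The independence of $\{\mathbf{e}_Q\}_{Q\in{\mathcal N}({\mathcal X})}$ alone (which your first step correctly establishes) was never the issue; the whole content of the lemma is to exclude a relation in which some $\mathbf{c}_i$ appears with nonzero coefficient, i.e.\ to show no nontrivial combination $\sum_i\lambda_i\mathbf{c}_i$ lies in the span of the $\mathbf{e}_Q$.

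That exclusion is not combinatorial, and the paper's proof is correspondingly geometric. It rewrites a dependence as an equality of two nonnegative combinations on disjoint index sets, uses the sign conditions $\langle\mathbf{c}_i,\mathbf{c}_j\rangle\geq0$, $\langle\mathbf{c}_i,\mathbf{e}_P\rangle\geq0$, $\langle\mathbf{e}_P,\mathbf{e}_Q\rangle\geq0$ together with the fact that the span of $S$ lies in the tangent hyperplane to the cone $x_0^2=\sum_Px_P^2$ along the generatrix $\mathbb{R}\mathbf{r}$ to conclude that the common value $\mathbf{d}$ of the two sides is isotropic, hence a multiple $\nu\mathbf{r}$; and then it derives a contradiction by showing (via Lemmas \ref{lema3} and \ref{lema4} and a B\'ezout--Noether count) that ${\mathcal L}(\nu\mathbf{r})$ consists exactly of products of $\nu$ members of the pencil ${\mathcal P}_{\mathcal X}$, which would force $\prod_{i\in I_1}F_i^{\alpha_i}$ to factor into members of the pencil --- impossible, since the $F_i$ are components of a single member. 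This last geometric step is the essential idea, and it is absent from your proposal.
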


\begin{proof}

Reasoning by contradiction, assume that $S$ is linearly dependent. This means that there exist two disjoint subsets $I_1$ and $I_2$ of the set $\{1, 2, \ldots,r\}$, two disjoint subsets $J_1$ and $J_2$ of the set ${\mathcal N}({\mathcal X})$ and positive integers $\alpha_i,\beta_Q$, $i\in I_1\cup I_2$, $Q\in J_1\cup J_2$ such that
\begin{equation}\label{Eq.5}
\sum_{i\in I_1} \alpha_i {\bf c}_i+\sum_{Q\in J_1} \beta_Q {\bf e}_Q=\sum_{i\in I_2} \alpha_i {\bf c}_i+\sum_{Q\in J_2} \beta_Q {\bf e}_Q.
\end{equation}

Taking coordinates $(x_0; (x_P)_{P\in \DX})$, Equality (\ref{Eq.0}) proves that the vector ${\bf r}$ defined in (\ref{elr}) spans a generatrix $\mathfrak{G}$ of the cone $\mathfrak{C}$ of $\mathbb{R}^{N+1}$ defined by the equation $\sum_{P\in \DX}x_P^2=x_0^2$. Moreover, $nx_0-\sum_{P\in \DX} r_Px_P = 0$ is an equation of the hyperplane $\mathfrak{H}$ tangent to $\mathfrak{C}$ which contains $\mathfrak{G}$. Equations (\ref{Eq.1}) and (\ref{Eq.3}) show that $S$ is contained in $\mathfrak{H}$ and, therefore, $\sum_{P\in \DX} y_P^2\geq y_0$ for any ${\bf y}=(y_0; (y_P)_{P\in \DX})$ in the span of $S$. In addition, the  equality happens if and only if ${\bf y}$ belongs to $\mathfrak{G}$. In other words, $\langle {\bf y},{\bf y}\rangle \leq 0$ for every ${\bf y}$ belonging to the span of $S$, and equality holds if and only if ${\bf y}$ is a multiple of the vector ${\bf r}$.

Let ${\bf d}$ be the vector given by the left (or the right) hand side of Equality (\ref{Eq.5}). The above paragraph shows that $\langle {\bf d}, {\bf d}\rangle\leq 0$. Moreover, from Equality (\ref{Eq.5}) we deduce that
\begin{multline*}
\left\langle {\bf d}, {\bf d}\right\rangle=\left\langle \sum_{i\in I_1} \alpha_i {\bf c}_i, \sum_{i\in I_2} \alpha_i {\bf c}_i \right\rangle+\left\langle \sum_{i\in I_1} \alpha_i {\bf c}_i, \sum_{Q\in J_2} \beta_Q {\bf e}_Q   \right\rangle\\
+\left\langle \sum_{i\in I_2} \alpha_i {\bf c}_i,  \sum_{Q\in J_1} \beta_Q {\bf e}_Q  \right\rangle+\left\langle \sum_{Q\in J_1} \beta_Q {\bf e}_Q,  \sum_{Q\in J_2} \beta_Q {\bf e}_Q  \right\rangle,
\end{multline*}
which allows us to deduce that
\begin{equation}\label{eq1}
\langle {\bf d}, {\bf d}\rangle=0.
\end{equation}
Indeed, this is a consequence of the following inequalities that hold for $1 \leq i , j \leq r$, $i \neq j$ and $P,Q\in \DX$, $P\not=Q$: $\langle {\bf c}_i,{\bf c}_j\rangle=d_id_j-\sum_{Q\in \DX} h_Q^ih_Q^j\geq 0$ which happens by B\'ezout-Noether Formula; $\langle {\bf c}_i,{\bf e}_P \rangle=h_P^i-\sum_{Q} h_Q^i\geq 0$, where $Q$ runs over the set of proximate to $P$ points in $\DX$ \cite[Theorem 4.2.2]{C}; and $\langle {\bf e}_P,{\bf e}_Q\rangle\geq 0$.

As a consequence of Equality (\ref{eq1}), ${\bf d}$ is a multiple of ${\bf r}$ and therefore, following the notations of Lemma \ref{lema3}, $\mathcal{L}({\bf d})=\mathcal{L}(\nu{\bf r})$ for some positive integer $\nu$. Applying Lemma \ref{lema3} to both sides of Equality (\ref{Eq.5}), it holds that
$$\mathcal{L}\left(\sum_{i\in I_1} \alpha_i {\bf c}_i\right)\subseteq \mathcal{L}(\nu{\bf r})\;\;\mbox{ and }\;\; \mathcal{L}\left(\sum_{i\in I_2} \alpha_i {\bf c}_i\right)\subseteq \mathcal{L}(\nu{\bf r}).$$
In particular, the curves $D_1$ and $D_2$ defined, respectively, by $H_1:=\prod_{i\in I_1} F_i^{\alpha_i}=0$ and $H_2:=\prod_{i\in I_2} F_i^{\alpha_i}=0$,
belong to the linear system $\mathcal{L}(\nu{\bf r})$.


Let ${\mathcal G}$ be the set of monomials of degree $\nu$ in two variables, $T_1$ and $T_2$, and consider the linear system ${\mathcal T}$ spanned by the set $$\{G(H_1,H_2)\mid G\in {\mathcal G}\}.$$
Recall that $\mathcal{L}(\bf{r})={\mathcal P}_{\mathcal X}$ by Lemma \ref{lema4}. It is clear that a curve defined by an equation $F(X,Y,Z)=0$ belongs to ${\mathcal T}$ if and only if $F=G_1 G_2 \cdots G_{\nu}$, where each $G_i(X,Y,Z)=0$ defines a curve in the pencil ${\mathcal P}_{\mathcal X}=\mathcal{L}(\bf{r})$. To end our proof, we shall prove that $\mathcal{T} = \mathcal{L}(\nu{\bf r})$, which provides the desired contradiction because then the curves $D_1$ and $D_2$ belong to $\mathcal{T}$; that is, each one is a product of polynomials defining curves in the pencil ${\mathcal P}_{\mathcal X}$ and this cannot happen since the curves  defined by $F_1, F_2, \ldots, F_r$ are components of the same curve of the pencil.

We conclude by proving the just alluded equality. $\mathcal{T}\subseteq \mathcal{L}(\nu{\bf r})$ is obvious. Now, reasoning by contradiction, assume that $\mathcal{T}\subsetneq \mathcal{L}(\nu{\bf r})$. The set $\Delta$ of generic elements in $\mathcal{L}(\nu{\bf r})$ which are not in  $\mathcal{T}$ is infinite because the generic elements of $\mathcal{L}(\nu{\bf r})$ are determined by the vectors in the complementary of a linear subvariety of $\mathbb{CP}^{s-1}$, where $s$ is the dimension of $\mathcal{L}(\nu{\bf r})$ (see Section \ref{bp}). Applying B\'ezout-Noether Formula (\ref{eq.Noether}) to any element $D\in \Delta$ and a generic element $G$ of the pencil ${\mathcal P}_{\mathcal X}$ we get
\begin{multline*}
\deg(D)\deg(G)-\sum_{P\in \tilde{D}\cap \tilde{G}} \mult_P(\tilde{D}) \mult_P(\tilde{G})\\
 \leq\deg(D)\deg(G)-\sum_{P\in \DX} \mult_P(\tilde{D}) \mult_P(\tilde{G})=\nu \bigg(n^2-\sum_{P\in \DX} r_P^2\bigg)=\nu \langle {\bf r},{\bf r}\rangle =0.
 \end{multline*}
This implies that $D\setminus \{\DX\cap \mathbb{CP}^2\}$ does not meet $G$. Since this happens for all generic element $G$ of  ${\mathcal P}_{\mathcal X}$, the irreducible components of $D$ must be irreducible components of non-generic elements of ${\mathcal P}_{\mathcal X}$. This is a contradiction because $\Delta$ is infinite and the set of non-generic curves in ${\mathcal P}_{\mathcal X}$ is finite. So $\mathcal{T}= \mathcal{L}(\nu{\bf r})$ and our proof is completed.
 \end{proof}

\begin{pro}\label{prop4}
The vector ${\bf r}$ generates the orthogonal complement of $S$ in $\mathbb{R}^{N+1}$ with respect to the bilinear form $\langle \cdot,\cdot \rangle$.
\end{pro}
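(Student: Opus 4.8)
The plan is to reduce the statement to a one-line dimension count, since the essential difficulty has already been absorbed into Lemma \ref{independent}. First I would record that $\mathbf{r}$ already lies in the orthogonal complement $S^{\perp}$ of $S$: this is precisely the content of equalities \eqref{Eq.1} and \eqref{Eq.3}. Indeed, for each $i$ we have $\langle \mathbf{r}, \mathbf{c}_i\rangle = n\,d_i - \sum_{P\in\DX} r_P h^i_P = 0$ by \eqref{Eq.1}, and for each $Q\in\mathcal{N}(\mathcal{X})$ we have $\langle \mathbf{r}, \mathbf{e}_Q\rangle = -\sum_{P\in\DX} r_P\, m_P^Q = r_Q - \sum_{P} r_P = 0$ by \eqref{Eq.3}, the last sum running over the points of $\DX$ proximate to $Q$. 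Moreover $\mathbf{r}\neq \mathbf{0}$, since its first coordinate $n$ is a positive integer.

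Next I would invoke the two structural facts about $S$ that are already at our disposal: by Lemma \ref{independent} the set $S$ is linearly independent, and its cardinality is $N$, as noted right after \eqref{ctoese}. Hence the linear span $\langle S\rangle$ is an $N$-dimensional subspace of $\mathbb{R}^{N+1}$.

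The key step is then to exploit that the pairing $\langle\cdot,\cdot\rangle$ introduced in \eqref{bilineal} is non-degenerate. For a non-degenerate symmetric bilinear form on a finite-dimensional vector space $V$, every subspace $W$ satisfies $\dim W + \dim W^{\perp} = \dim V$, because the map $v\mapsto \langle v,\cdot\rangle$ identifies $W^{\perp}$ with the annihilator of $W$ in the dual $V^{*}$. Applying this with $V=\mathbb{R}^{N+1}$ and $W=\langle S\rangle$ yields $\dim S^{\perp} = (N+1) - N = 1$. Since $\mathbf{r}$ is a nonzero element of the one-dimensional space $S^{\perp}$, it generates it, which is exactly the assertion of the proposition.

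I do not expect any genuine obstacle at this stage: the real work was carried out in Lemma \ref{independent}, whose proof used that the span of $S$ lies in the tangent hyperplane $\mathfrak{H}$ to the cone $\mathfrak{C}$ and that $\langle\mathbf{d},\mathbf{d}\rangle\le 0$ there. The only point that deserves a word of care here is ensuring that the pairing is non-degenerate, so that the formula $\dim W + \dim W^{\perp}=\dim V$ is legitimately available; this is guaranteed by the defining expression \eqref{bilineal}, which is the standard form of Lorentzian signature $(1,N)$ and is therefore non-degenerate.
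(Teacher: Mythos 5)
Your proof is correct and follows essentially the same route as the paper: Lemma \ref{independent} plus the cardinality of $S$ give $\dim S^{\perp}=1$ via non-degeneracy of the pairing, and equalities \eqref{Eq.1} and \eqref{Eq.3} place the nonzero vector $\mathbf{r}$ in $S^{\perp}$. Your explicit verification that $\langle\mathbf{r},\mathbf{c}_i\rangle=0$ and $\langle\mathbf{r},\mathbf{e}_Q\rangle=0$ is exactly what the paper relies on (it was already recorded right after \eqref{ctoese}), so there is nothing missing.
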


\begin{proof}
Lemma \ref{independent} and the fact that $N$ is the cardinality of $S$ prove that the orthogonal complement of $S$ in $\mathbb{R}^{N+1}$ has dimension 1. Then the result follows from equalities (\ref{Eq.1}), (\ref{Eq.2}) and (\ref{Eq.3}).
\end{proof}

Next we state our main theorem, which justifies the forthcoming Corollary \ref{poin} and Algorithm \ref{alg1}. Corollary \ref{poin} states that the Poincar\'e problem can be solved for the family of vector fields $\mathbf{X}$ that admit a WAI polynomial first integral in the sense that the degree of the first integral can be obtained from the reduction of singularities of $\mathbf{X}$. Algorithm \ref{alg1} decides whether a vector field $\mathbf{X}$ has a WAI polynomial first integral or not, and computes a minimal one in the affirmative case.






\begin{teo}\label{char}
Let ${\bf X}$ be a planar polynomial vector field having a WAI polynomial first integral. Consider  its complex projectivization $\mathcal X$  and  the corresponding dicritical configuration $\DX$. Let $R_1, R_2\ldots, R_r$ be the maximal points of $\DX$ and set ${\rm Fr}(\DX)=\{P\in \DX\mid \mbox{ $P$ is free}\}$. Then the following statements hold:
\begin{itemize}
\item[(a)] The line at infinity is invariant by $\mathcal X$ and contains the points in $\DX \cap \mathbb{P}^2$.
\item[(b)] $R_1, R_2 \ldots,R_r$ are the unique infinitely near dicritical singularities of $\mathcal X$.
\item[(c)] The set ${\rm MFr}(\DX)$ of maximal elements in ${\rm Fr}(\DX)$ has cardinality $r$.
\item[(d)] Let ${\rm MFr}(\DX)=\{M_1, M_2, \ldots,M_r\}$. Then for each $i$, $1 \leq i \leq r$, there exists an invariant by $\mathcal X$ curve $C_i$ in the linear system ${\mathcal L}_{d_i}(\DX,  {\mathbf m}(\DX^{M_i},\DX))$, where ${\mathbf m}(\DX^{M_i},\DX):=(h^i_P)_{P \in \DX}$ and $d_i:=\sum_{P\in \DX\cap \tilde{L}} h^i_P$, such that $\mult_P(\tilde{C}_i)=h^i_P$.
\item[(e)] The set $S=\{{\bf c}_i\}_{i=1}^r\cup \{{\bf e}_Q\}_{Q\in \mathcal{N}(\mathcal{X})}\subseteq \mathbb{R}^{N+1}$ introduced in (\ref{ctoese}) is linearly independent.
\item[(f)] Let ${\bf R}=(n^{-};(r^{-}_P)_{P \in \DX})$ be the vector with non-negative integral components that generates the orthogonal complement, with respect to the bilinear pair $\langle \cdot,\cdot \rangle$ defined in (\ref{bilineal}), of the vector space that $S$ spans in $\mathbb{R}^{N+1}$ and such that $n^{-}>0$ and $\gcd(n^{-}; (r^{-}_P)_{P \in \DX})=1$. Then ${\bf R} ={\bf r}$, ${\bf r}=(n;(r_P)_{P \in \DX})$ being the vector defined in (\ref{elr}). Moreover, $n=\sum_{P\in{\mathcal D}({\mathcal X})\cap\tilde L}r_P$,  $n^2=\sum_{P \in \DX} r_P^2$ and there exist non-negative integers $n_i>0$,  $1\leq i\leq r$, and $b_P$, $P\in \mathcal{N}(\mathcal X)$, such that
\begin{equation}
\label{igual}
{\bf r}=\sum_{i=1}^r n_i{\bf c}_i+\sum_{P\in \mathcal{N}(\mathcal X)} b_P {\bf e}_P.
\end{equation}
\item[(g)] If $r\geq 2$ then, for each $i$ such that $1\leq i\leq r$, $C_i$ is the unique curve in the linear system ${\mathcal L}_{d_i}\left(\DX,  {\mathbf m}(\DX^{M_i},\DX)\right)$. If $r=1$ then $\mathbf{c}_1=\mathbf{r}$.
\item[(h)] Let $f_i(x,y)=0$ be an equation of the affine curve defined by $C_i$, $1\leq i\leq r$. Then,  $\prod_{i=1}^{r} f_i^{n_i}$ is a minimal WAI polynomial first integral of the vector field ${\bf X}$.

\end{itemize}
\end{teo}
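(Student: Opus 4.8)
The plan is to read off statements (a)--(e) from results already in hand and to concentrate the real work on (f)--(h). For (a) I would recall that the complex projectivization of $\mathbf X$ has vanishing third component $R\equiv 0$, so that $L:\{Z=0\}$ is invariant, while the presence of the generator $Z^n$ in $\mathcal P_{\mathcal X}$ forces every proper base point to lie on $L$; combined with Theorem \ref{prrr}(1) this gives $\DX\cap\mathbb{CP}^2\subseteq L$. Statements (b), (c) and (d) are exactly items (2), (3) and (4) of Theorem \ref{prrr}, after naming $\mathrm{MFr}(\DX)=\{M_1,\dots,M_r\}$, and (e) is precisely Lemma \ref{independent}.

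For (f) I would first apply Proposition \ref{prop4}, so that $\mathbf r$ spans the orthogonal complement of $S$. Since $\mathbf r$ has positive first entry $n$ and non-negative entries $r_P$, and $\gcd\bigl(n;(r_P)_{P}\bigr)=1$ by Lemma \ref{bbb}, the normalized generator $\mathbf R$ must equal $\mathbf r$; the two numerical identities are then just (\ref{Eq.2}) and (\ref{Eq.0}). The genuine content is (\ref{igual}). Here I would use that in the proof of Lemma \ref{independent} the span of $S$ was shown to lie in the hyperplane $\mathfrak H$ tangent to the cone $\mathfrak C$ along the generatrix $\mathbb R\,\mathbf r$; as $\dim\mathrm{span}(S)=N=\dim\mathfrak H$ and $\mathbf r\in\mathfrak H$, we obtain $\mathbf r\in\mathrm{span}(S)$, hence a representation $\mathbf r=\sum_i\mu_i\mathbf c_i+\sum_{P\in\mathcal N(\mathcal X)}\nu_P\mathbf e_P$. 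To pin down the coefficients I would interpret this on the surface $X$ obtained by blowing up $\DX$, reading $\mathbf c_i$ as $[\tilde C_i]$ and $\mathbf e_P$ as $[\tilde E_P]$ in the intersection lattice: the fibre of $\varphi$ over the value corresponding to $\prod_iF_i^{n_i}$ decomposes as the effective divisor $\sum_i n_i\tilde C_i+\sum_{P\in\mathcal N(\mathcal X)}b_P\tilde E_P$, the dicritical divisors $E_{R_i}$ not appearing by Proposition \ref{proposition2}, and its class equals that of a generic fibre, namely $\mathbf r$. Matching classes and invoking the uniqueness of the expansion (from (e)) yields $\mu_i=n_i>0$ and $\nu_P=b_P\ge 0$, the latter because the $b_P$ are multiplicities in an effective fibre.

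For (g) the crucial point is the sign of $\langle\mathbf c_i,\mathbf c_i\rangle$. The pairing (\ref{bilineal}) has Lorentzian signature $(1,N)$; since $\langle\mathbf r,\mathbf r\rangle=0$ by (\ref{Eq.0}) and $\langle\mathbf c_i,\mathbf r\rangle=0$ by (\ref{Eq.1}), the vector $\mathbf c_i$ lies in the orthogonal complement of the null vector $\mathbf r$, on which the form is negative semidefinite with radical $\mathbb R\,\mathbf r$. When $r\ge 2$, the representation (\ref{igual}) together with the independence in (e) shows $\mathbf c_i\notin\mathbb R\,\mathbf r$, so $\langle\mathbf c_i,\mathbf c_i\rangle<0$, that is $\sum_P(h^i_P)^2>d_i^2$. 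Now if $C'$ were a second member of $\mathcal L_{d_i}\bigl(\DX,\mathbf m(\DX^{M_i},\DX)\bigr)$, then B\'ezout--Noether (\ref{eq.Noether}) applied to $C_i$ and $C'$ would localize at the base points an intersection of at least $\sum_P(h^i_P)^2>d_i^2=\deg C_i\deg C'$, which is impossible unless $C_i$ and $C'$ share a component; irreducibility of $C_i$ and $\deg C'=d_i$ then force $C'=C_i$. For $r=1$, minimality gives $n_1=1$, the pencil is $\mathbb P\langle F_1,Z^{d_1}\rangle$, and $\mathbf c_1=\mathbf r$ follows at once.

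Finally I would deduce (h) by assembling the pieces. By (d) each factor $F_i$ of the fixed minimal integral lies in $\mathcal L_{d_i}\bigl(\DX,\mathbf m(\DX^{M_i},\DX)\bigr)$, so by the uniqueness established in (g) the curve $C_i$ produced there is precisely the $i$-th irreducible factor; and by the independence in (e) the expansion (\ref{igual}) is unique, so its coefficients $n_i$ are the exponents occurring in $H$. Hence $\prod_i f_i^{n_i}$ agrees, up to a nonzero constant, with the chosen minimal WAI first integral, and since its degree is $\sum_i n_i d_i=n=\deg H$ it is again minimal, which proves (h). I expect the main obstacle to be (f): converting the purely formal membership $\mathbf r\in\mathrm{span}(S)$ into the geometric fibre decomposition (\ref{igual}) with the correct, sign-controlled integer coefficients $n_i>0$ and $b_P\ge 0$, since this is exactly the step where the linear-algebraic normal form of $\mathbf r$ must be reconciled with the resolution geometry of the pencil.
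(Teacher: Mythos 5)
Your proof is correct and follows the same overall strategy as the paper (same reduction of (a)--(e) to Theorem \ref{prrr}, Lemma \ref{independent} and Proposition \ref{prop4}; same Lorentzian-cone argument for (g) when $r\geq 2$; same use of B\'ezout--Noether), but two steps are handled by genuinely different means. For Equality (\ref{igual}) the paper works purely with linear algebra: it forms the proximity matrix $\mathbf{P}$, sets $(b_P)_{P\in\DX}:=\mathbf{P}^{-1}\left(r_P-\mult_P(D)\right)_{P\in\DX}$, invokes \cite[Theorem 4.5.2]{C} for non-negativity, and then uses Part (d) of Proposition \ref{proposition2} together with $\langle\mathbf{r},\mathbf{e}_P\rangle\geq 0$ to force $b_P=0$ at the dicritical points; you instead read the same decomposition off the special fibre of $\varphi$ over the value corresponding to $\prod_iF_i^{n_i}$, getting $n_i>0$ and $b_P\geq 0$ from effectivity of that fibre and excluding the $E_{R_i}$ by Proposition \ref{proposition2}(c). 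These are two presentations of the same total-transform computation, and yours is a legitimate, arguably more geometric, alternative; it buys a cleaner explanation of \emph{why} the coefficients are non-negative, at the cost of having to justify that every component of the special fibre is one of the $\tilde C_i$ or an exceptional divisor (which is true, but worth a sentence). The one place where you are too quick is the case $r=1$ of (g): the paper does not regard $\mathbf{c}_1=\mathbf{r}$ as immediate and cites \cite[Theorem 1]{c-p-r-1}. Your ``follows at once'' can in fact be repaired inside your own framework --- from $\mathbf{r}=\mathbf{c}_1+\sum_Pb_P\mathbf{e}_P$, pairing with $\mathbf{c}_1$ and using $\langle\mathbf{c}_1,\mathbf{e}_P\rangle\geq 0$, $b_P\geq 0$ and $\langle\mathbf{c}_1,\mathbf{c}_1\rangle\leq 0$ forces $\langle\mathbf{c}_1,\mathbf{c}_1\rangle=0$, hence $\mathbf{c}_1\in\mathbb{R}\mathbf{r}$, and comparing first components gives equality --- but as written this step is asserted rather than proved. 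Your route to (h), identifying the constructed $C_i$ and exponents with the factors of the assumed minimal integral via the uniqueness in (e) and (g), also differs from the paper's (which shows directly that $\prod_iF_i^{n_i}$ and $Z^n$ span $\mathcal{P}_{\mathcal{X}}$ via Lemmas \ref{lema3}, \ref{lema4} and \ref{agh}); both are valid, though the paper's version is the one that transfers to the algorithmic setting where no first integral is assumed in advance.
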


\begin{proof}

Items (a)-(f), except Equality (\ref{igual}), follow  from the preceding paragraphs in this section. Notice that Proposition \ref{prop4} and Lemma \ref{bbb} prove the equality ${\bf R} ={\bf r}$ in Item (f). Let us show that Equality (\ref{igual}) holds.

Assume that $H=\prod_{i=1}^r f_i^{n_i}$ is a WAI polynomial first integral which, as usual, we pick minimal and let us prove the above equality. Consider the matrix $\mathbf{P}=(p_{P,Q})_{P,Q \in  \DX}$ such that $p_{P,Q}$ equals $-1$ (respectively, $1$, $0$) if $P=Q$ (respectively, $P$ is proximate to $Q$, otherwise). Then, by \cite[Theorem 4.5.2]{C}, the components of the vector $$(b_P)_{P \in  \DX}:=\mathbf{P}^{-1} \left(r_P-\mult_P(D)\right)_{P \in  \DX}$$ given by the global curve $D$ defined by $\prod_{i=1}^r F_i^{n_i}$, $F_i$ being the projectivization of $f_i$, are non-negative because  $D$ passes virtually through the cluster $(\DX, (r_P)_{P\in \DX})$. With the above information, set $\mathbf{w}$ the vector in $\mathbb{R}^{N+1}$ given by $$\mathbf{w}:=\sum_{i=1}^r n_i\mathbf{c}_i+\sum_{P \in \DX} b_P \mathbf{e}_P.$$

The equality $\mathbf{w}=\mathbf{r}$ holds because $\mathbf{P}$ is the change of basis matrix between the basis $\{\mathbf{e}_P\}_{P\in \DX}$ of  $\mathbb{R}^N$ and  the canonical one.  Finally, the equalities $\langle \mathbf{r}, \mathbf{r} \rangle=0$, $\langle \mathbf{r}, \mathbf{c}_i \rangle =0$, $1 \leq i \leq r$ and the inequalities $\langle \mathbf{r}, \mathbf{e}_P \rangle\geq 0$, $P\in \DX$, prove, by Part (d) of Proposition \ref{proposition2}, that $b_P=0$ whenever $P$ is an infinitely near dicritical singularity. This finishes the proof of Equality (\ref{igual}).

Now we prove Item (g). Reasoning as in the paragraph below (\ref{Eq.5}) one can show the inequalities $\langle \mathbf{c}_i, \mathbf{c}_i\rangle\leq 0$, $1 \leq i \leq r$, and also that  $\langle \mathbf{c}_i, \mathbf{c}_i\rangle= 0$ if and only if the vector $\mathbf{c}_i$ is a multiple of $\mathbf{r}$. In case $r\geq 2$, the vector $\mathbf{c}_i$ cannot be a multiple of $\mathbf{r}$ because $\mult_{R_j}(\tilde{C}_i)=0$ if $i\not=j$ and all the components of $\mathbf{r}$ are different from $0$. Then $\langle \mathbf{c}_i, \mathbf{c}_i\rangle<0$ and, therefore, $d_i^2<\sum_{P\in \DX} \mult_{P}(\tilde{C}_i)^2$. As a consequence, if $C_i$ is not the unique curve in the linear system ${\mathcal L}_{d_i}(\DX,  {\mathbf m}(\DX^{M_i},\DX))$, we get a contradiction by applying B\'ezout-Noether Formula for two generic curves of that system. Therefore  (g) is proved when $r\geq 2$. The result for $r=1$ holds by \cite[Theorem 1]{c-p-r-1}.

To conclude our proof, it only remains to show that Item (h) is true. Firstly, by  Lemma \ref{lema4}, $\mathcal{P}_{\mathcal X}=\mathcal{L}(\mathbf{r})$ . Now, on the one hand, the curve defined by $\prod_{i=1}^r F_i(X,Y,Z)^{n_i}$ belongs to the pencil ${\mathcal P}_{\mathcal{X}}=\mathcal{L}(\mathbf{r})$ in virtue of Equality (\ref{igual}) and Lemma \ref{lema3}. On the other hand, setting $\mathbf{l}=(1; (\mult_P(\tilde{L}))_{P\in \DX})$, we have $\langle \mathbf{r}, \mathbf{\mathbf{l}}\rangle =0$ by Equality (\ref{Eq.2}). So the non-reduced curve defined by $Z^n$ belongs also to the pencil $\mathcal{P}_{\mathcal X}$ by Lemma \ref{agh}. Therefore $\prod_{i=1}^r F_i(X,Y,Z)^{n_i}$ and $Z^n$ span the pencil and thus $H=\prod_{i=1}^r f_i^{n_i}$ is a WAI polynomial first integral.  Notice that $H$ is minimal  because, otherwise, $\gcd(n_1, n_2, \ldots,n_r)>1$, which contradicts the fact that the components of $\mathbf{r}$ have no common factor.

We finish by explaining that the curves $C_i$ have only one place at infinity. In fact, they have only one intersection point with the line at infinity (by items (d), (e) and B\'ezout-Noether Formula) and only one analytic branch at this point (by \cite[Theorem 3.5.3]{C}).
\end{proof}

\begin{cor}
\label{poin}
Let ${\bf X}$ be a planar polynomial vector field as in Theorem \ref{char}. Then:
\begin{enumerate}
\item The degree $n$ and the exponents $n_i$ of the (minimal) WAI polynomial first integral of ${\bf X}$ can be computed from the proximity graph of the dicritical configuration $\DX$ and the number of points in $\DX$ through which the strict transform of the infinity line passes.
    \item The proximity graph of $\DX$ determines a bound for the degree of the (minimal) WAI polynomial first integral.
\end{enumerate}
\end{cor}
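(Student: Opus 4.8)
The plan is to reduce everything to Theorem~\ref{char}, reading off from the combinatorics of the proximity graph $\Gamma_{\DX}$ all the data entering the linear-algebra description of $\mathbf{r}$ and of the exponents $n_i$. First I would record what $\Gamma_{\DX}$ determines by itself: the proximity relation among the points of $\DX$, hence the set $\mathcal{N}(\mathcal{X})$ of non-maximal points and all the vectors $\mathbf{e}_Q$ of \eqref{bilineal}; the maximal points $R_1,\dots,R_r$ and, by item (c), the maximal free points $M_1,\dots,M_r$; and therefore, for each $i$, the set $\DX^{M_i}$ together with the multiplicity system $\mathbf{m}(\DX^{M_i},\DX)=(h^i_P)_P$. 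In other words, $\Gamma_{\DX}$ fixes every coordinate of every vector in the set $S$ of \eqref{ctoese} \emph{except} the degrees $d_i$, the $0$-th coordinates of the $\mathbf{c}_i$.

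For part (1), the missing data $d_i$ are supplied by \eqref{inter}, $d_i=\sum_{P\in\DX^{M_i}\cap\tilde L}h^i_P$, so it suffices to identify the set $\DX\cap\tilde L$. By item (a) every proper point of $\DX$ lies on the smooth invariant line $L$; since $L$ is smooth, its successive strict transforms meet the exceptional divisors transversally, so the points of $\DX$ on $\tilde L$ form, inside the tree $\Gamma_{\DX}$, a union of initial chains of free points issuing from the proper points. Consequently $\Gamma_{\DX}$ together with the number of points of $\DX$ on $\tilde L$ recovers $\DX\cap\tilde L$, hence each $d_i$, each $\mathbf{c}_i$, and thus the whole of $S$. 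By items (e) and (f) the vector $\mathbf{r}$ is then computed as the unique primitive generator with positive $0$-th coordinate of the one-dimensional orthogonal complement $S^{\perp}$ with respect to $\langle\cdot,\cdot\rangle$, and its first coordinate is the degree $n$. Finally, writing $\mathbf{r}$ in the form \eqref{igual}, the linear independence of $S$ (item (e)) makes the coefficients unique, so the exponents $n_i$ are obtained by solving a single linear system. This proves the first assertion.

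For part (2) the point is that, among all the data feeding the computation above, only the degrees $d_i$ depend on $\tilde L$. By \eqref{inter} each $d_i$ is one of the finitely many values $\sum_{P\in A}h^i_P$ with $A\subseteq\DX^{M_i}$; in particular $0\le d_i\le\sum_{P\in\DX^{M_i}}h^i_P$, all of them read off from $\Gamma_{\DX}$. Thus $\Gamma_{\DX}$ produces only finitely many candidate sets $S$, and for each one with $\dim S^{\perp}=1$ a candidate primitive null vector $\mathbf{r}$ and a candidate degree. The true degree is one of these, so the maximum of the first coordinates over the (finitely many) admissible candidates is a bound determined by $\Gamma_{\DX}$ alone, as claimed.

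I expect the genuine obstacle to be the identification of $\DX\cap\tilde L$ from its cardinality in part (1): one must argue that the smoothness and invariance of $L$ (item (a)) rigidify the way $\tilde L$ threads through $\Gamma_{\DX}$ enough that the total number, rather than the individual chain lengths, suffices. Everything else is bookkeeping on top of Theorem~\ref{char}: that $S$ spans an $N$-dimensional space with one-dimensional orthogonal complement (items (e),(f)), and that the expansion \eqref{igual} has unique non-negative coefficients.
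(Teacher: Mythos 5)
Your argument follows the paper's proof essentially verbatim: part (1) is obtained from items (f) and (h) of Theorem \ref{char} together with Item (4) of Theorem \ref{prrr} (your use of \eqref{inter}), and part (2) from the finiteness of the possible positions of $\tilde L$ among the initial blocks of consecutive free points of $\Gamma_{\DX}$, taking the maximum of the resulting candidate degrees. The only step you flag as delicate --- recovering $\DX\cap\tilde L$ from the proximity graph and the cardinality alone --- is treated no more explicitly in the paper, which simply asserts that $\tilde L$ can only pass through the first block of consecutive free points; so your proposal is, if anything, more detailed than the published proof.
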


\begin{proof}
Our first statement follows from items (f) and (h) of Theorem \ref{char} and Item (4) of Theorem \ref{prrr}. With respect to our second statement, it can be proved from the fact that the line at infinity only can go through some points in the first block of consecutive free points in $\DX$. So it suffices to consider the maximum of the degrees that can be computed as in Statement (1) for those finitely many possibilities.
\end{proof}

Next, we state the algorithm mentioned before Theorem \ref{char}, which will be followed by an example that explains how it works.
\medskip

\begin{alg}
\label{alg1}
{\rm

\begin{itemize} $\;$

\item \emph{Input:} An arbitrary polynomial vector field $\mathbf{X}$.

\item \emph{Output:} Either a minimal  WAI polynomial first integral of $\mathbf{X}$, or $0$ (in case $\mathbf{X}$ has no first integral of this type).

\end{itemize}
\medskip

\begin{enumerate}
 \item Compute the dicritical configuration ${\mathcal D}(\mathcal{X})$ of the complex projectivization $\mathcal{X}$ of $\mathbf{X}$. To do it, we need to perform the reduction of singularities of $\mathbf{X}$.
\item Let $r$ be the number of maximal points of ${\mathcal D}(\mathcal{X})$. If either ${\rm Fr}(\DX)$ has not $r$ maximal elements or Item (e) of Theorem \ref{char} is not satisfied, then return $0$.
\item Consider the linear systems defined in Item (g) of Theorem \ref{char} and compute an equation $f_i=0$ for the unique curve $C_i$, $1 \leq i \leq r$, there defined.
\item Compute the vector $\mathbf{R}$ in Item (f) of Theorem \ref{char}. If $\mathbf{R}$ does not satisfy the equalities in that item,  then return $0$. Let $K:=\prod_{i=1}^{r} f_i^{n_i}$ be the polynomial in Item (h) of Theorem \ref{char}, whose exponents are given by the vector $\mathbf{R}$. Check whether $K$ is a first integral of $\mathbf{X}$. If the answer is positive, then return $K$. Otherwise return $0$.

\end{enumerate}
}
\end{alg}

\begin{exa}
\label{elcuatro}
\begin{table}[h!]
\centering
    \begin{tabular}{||c|c||}
   \hline\hline Chart & System of coordinates \\\hline\hline
    $Y\not=0$  & $(x=X/Y, z=Z/Y)$ at $P_0$ \\
    $V_1^{P_0}$ & $(x_1=x, z_1=z/x)$ at $P_1$\\
    $V_2^{P_1}$ & $(x_2=x_1/z_1, z_2=z_1)$  at $P_2$\\
   $V_1^{P_2}$ & $(x_3=x_2, z_3=z_2/x_2-1)$ at $P_3$\\
$V_1^{P_{i-1}}$ & $(x_i=x_{i-1}, z_i=z_{i-1}/x_{i-1})$ at $P_i$, $4 \leq i \leq 13$\\
$V_1^{P_3}$ & $(x_{14}=x_3, z_{14}=z_3/x_3-1)$ at $P_{14}$\\
$V_1^{P_{i-1}}$ & $(x_i=x_{i-1}, z_i=z_{i-1}/x_{i-1}-1)$ at $P_i$, $15 \leq i \leq 23$\\
$V_1^{P_1}$ & $(x_{24}=x_1, z_{24}=z_1/x_1-1)$ at $P_{24}$ \\
$V_1^{P_{i-1}}$ & $(x_i=x_{i-1}, z_i=z_{i-1}/x_{i-1})$ at $P_i$, $25 \leq i \leq 28$ \\
$Z\not=0$; & $(x'=X/Z, y'=Y/Z)$  at $Q_0$ \\
$V_1^{Q_0}$ & $(x'_1=x', y'_1=y'/x')$ at $Q_1$ \\ \hline\hline
    \end{tabular}
\vspace{0.4cm}
    \caption{The configuration $\mathcal{S}(\mathcal{X})$.}
\label{tabla3}
\end{table}

Consider the polynomial vector field $\mathbf{X}$ defined by the following differential form:
$$(10 x^7 - 9 x^6  + 6 x^5 y  + 9 x^4 y  - 6 x^3 y  +
 6 x^2 y^2  + 2 x y^2 )dx+
(2 x^6  - x^4  + 6 x^3 y  - x^2 y  + 4 y^2)dy.$$
Taking projective coordinates $X,Y,Z$ and considering $x$ and $y$ as affine coordinates in the chart $Z\not=0$, $\mathbf{X}$ is extended to its complex projectivization $\mathcal X$ defined by the homogeneous 1-form $\omega=A\;dX+B\;dY+C\;dZ$, where
\[
\begin{split}
A=&10 X^7 Z - 9 X^6 Z^2 + 6 X^5 Y Z^2 + 9 X^4 Y Z^3 - 6 X^3 Y Z^4 + 6 X^2 Y^2 Z^4 + 2 X Y^2 Z^5,\\
B=&2 X^6 Z^2 - X^4 Z^4 + 6 X^3 Y Z^4 - X^2 Y Z^5 + 4 Y^2 Z^6,\\
C=& -10 X^8 + 9 X^7 Z - 8 X^6 Y Z - 9 X^5 Y Z^2 + 7 X^4 Y Z^3 \\
&- 12 X^3 Y^2 Z^3 - X^2 Y^2 Z^4 - 4 Y^3 Z^5.
\end{split}
\]
Applying the algorithm of reduction of singularities we obtain that the singular configuration of $\mathcal X$ is $\mathcal{S}(\mathcal{X})=\{P_i\}_{i=0}^{28}\cup \{Q_0,Q_1\}$, where the involved infinitely near points are those described in Table \ref{tabla3}.

The dicritical infinitely near singularities of $\mathcal{X}$ are $P_{13}, P_{23}$ and $P_{28}$. Therefore the configuration $\DX$ is $\{P_i\}_{i=0}^{28}$. We have depicted the proximity graph of this configuration in Figure \ref{F2}.

\smallskip

With the notations as above,  $r=3$, $R_1=M_1=P_{13}$, $R_2=M_2=P_{23}$ and $R_3=M_3=P_{28}$. Notice that $\DX\cap \tilde{L}=\{P_0,P_1\}$. The three first rows of the following matrix are, respectively, the vectors $\mathbf{c}_1, \mathbf{c}_2$ and $\mathbf{c}_3$, and the remaining ones are the vectors $\{\mathbf{e}_Q\}_{Q\in \mathcal{N}(\mathcal{X})}$:
{\tiny
$$
\arraycolsep=1pt
\left(
\begin{array}{cccccccccccccccccccccccccccccc}
 3 & 2 & 1 & 1 & 1 & 1 & 1 & 1 & 1 & 1 & 1 & 1 & 1 & 1 & 1 & 0 & 0 & 0 & 0 & 0 & 0 & 0 & 0 & 0 & 0 & 0 & 0 & 0 & 0 & 0 \\
 3 & 2 & 1 & 1 & 1 & 0 & 0 & 0 & 0 & 0 & 0 & 0 & 0 & 0 & 0 & 1 & 1 & 1 & 1 & 1 & 1 & 1 & 1 & 1 & 1 & 0 & 0 & 0 & 0 & 0 \\
 2 & 1 & 1 & 0 & 0 & 0 & 0 & 0 & 0 & 0 & 0 & 0 & 0 & 0 & 0 & 0 & 0 & 0 & 0 & 0 & 0 & 0 & 0 & 0 & 0 & 1 & 1 & 1 & 1 & 1 \\
 0 & -1 & 1 & 1 & 0 & 0 & 0 & 0 & 0 & 0 & 0 & 0 & 0 & 0 & 0 & 0 & 0 & 0 & 0 & 0 & 0 & 0 & 0 & 0 & 0 & 0 & 0 & 0 & 0 & 0 \\
 0 & 0 & -1 & 1 & 0 & 0 & 0 & 0 & 0 & 0 & 0 & 0 & 0 & 0 & 0 & 0 & 0 & 0 & 0 & 0 & 0 & 0 & 0 & 0 & 0 & 1 & 0 & 0 & 0 & 0 \\
 0 & 0 & 0 & -1 & 1 & 0 & 0 & 0 & 0 & 0 & 0 & 0 & 0 & 0 & 0 & 0 & 0 & 0 & 0 & 0 & 0 & 0 & 0 & 0 & 0 & 0 & 0 & 0 & 0 & 0 \\
 0 & 0 & 0 & 0 & -1 & 1 & 0 & 0 & 0 & 0 & 0 & 0 & 0 & 0 & 0 & 1 & 0 & 0 & 0 & 0 & 0 & 0 & 0 & 0 & 0 & 0 & 0 & 0 & 0 & 0 \\
 0 & 0 & 0 & 0 & 0 & -1 & 1 & 0 & 0 & 0 & 0 & 0 & 0 & 0 & 0 & 0 & 0 & 0 & 0 & 0 & 0 & 0 & 0 & 0 & 0 & 0 & 0 & 0 & 0 & 0 \\
 0 & 0 & 0 & 0 & 0 & 0 & -1 & 1 & 0 & 0 & 0 & 0 & 0 & 0 & 0 & 0 & 0 & 0 & 0 & 0 & 0 & 0 & 0 & 0 & 0 & 0 & 0 & 0 & 0 & 0 \\
 0 & 0 & 0 & 0 & 0 & 0 & 0 & -1 & 1 & 0 & 0 & 0 & 0 & 0 & 0 & 0 & 0 & 0 & 0 & 0 & 0 & 0 & 0 & 0 & 0 & 0 & 0 & 0 & 0 & 0 \\
 0 & 0 & 0 & 0 & 0 & 0 & 0 & 0 & -1 & 1 & 0 & 0 & 0 & 0 & 0 & 0 & 0 & 0 & 0 & 0 & 0 & 0 & 0 & 0 & 0 & 0 & 0 & 0 & 0 & 0 \\
 0 & 0 & 0 & 0 & 0 & 0 & 0 & 0 & 0 & -1 & 1 & 0 & 0 & 0 & 0 & 0 & 0 & 0 & 0 & 0 & 0 & 0 & 0 & 0 & 0 & 0 & 0 & 0 & 0 & 0 \\
 0 & 0 & 0 & 0 & 0 & 0 & 0 & 0 & 0 & 0 & -1 & 1 & 0 & 0 & 0 & 0 & 0 & 0 & 0 & 0 & 0 & 0 & 0 & 0 & 0 & 0 & 0 & 0 & 0 & 0 \\
 0 & 0 & 0 & 0 & 0 & 0 & 0 & 0 & 0 & 0 & 0 & -1 & 1 & 0 & 0 & 0 & 0 & 0 & 0 & 0 & 0 & 0 & 0 & 0 & 0 & 0 & 0 & 0 & 0 & 0 \\
 0 & 0 & 0 & 0 & 0 & 0 & 0 & 0 & 0 & 0 & 0 & 0 & -1 & 1 & 0 & 0 & 0 & 0 & 0 & 0 & 0 & 0 & 0 & 0 & 0 & 0 & 0 & 0 & 0 & 0 \\
 0 & 0 & 0 & 0 & 0 & 0 & 0 & 0 & 0 & 0 & 0 & 0 & 0 & -1 & 1 & 0 & 0 & 0 & 0 & 0 & 0 & 0 & 0 & 0 & 0 & 0 & 0 & 0 & 0 & 0 \\
 0 & 0 & 0 & 0 & 0 & 0 & 0 & 0 & 0 & 0 & 0 & 0 & 0 & 0 & 0 & -1 & 1 & 0 & 0 & 0 & 0 & 0 & 0 & 0 & 0 & 0 & 0 & 0 & 0 & 0 \\
 0 & 0 & 0 & 0 & 0 & 0 & 0 & 0 & 0 & 0 & 0 & 0 & 0 & 0 & 0 & 0 & -1 & 1 & 0 & 0 & 0 & 0 & 0 & 0 & 0 & 0 & 0 & 0 & 0 & 0 \\
 0 & 0 & 0 & 0 & 0 & 0 & 0 & 0 & 0 & 0 & 0 & 0 & 0 & 0 & 0 & 0 & 0 & -1 & 1 & 0 & 0 & 0 & 0 & 0 & 0 & 0 & 0 & 0 & 0 & 0 \\
 0 & 0 & 0 & 0 & 0 & 0 & 0 & 0 & 0 & 0 & 0 & 0 & 0 & 0 & 0 & 0 & 0 & 0 & -1 & 1 & 0 & 0 & 0 & 0 & 0 & 0 & 0 & 0 & 0 & 0 \\
 0 & 0 & 0 & 0 & 0 & 0 & 0 & 0 & 0 & 0 & 0 & 0 & 0 & 0 & 0 & 0 & 0 & 0 & 0 & -1 & 1 & 0 & 0 & 0 & 0 & 0 & 0 & 0 & 0 & 0 \\
 0 & 0 & 0 & 0 & 0 & 0 & 0 & 0 & 0 & 0 & 0 & 0 & 0 & 0 & 0 & 0 & 0 & 0 & 0 & 0 & -1 & 1 & 0 & 0 & 0 & 0 & 0 & 0 & 0 & 0 \\
 0 & 0 & 0 & 0 & 0 & 0 & 0 & 0 & 0 & 0 & 0 & 0 & 0 & 0 & 0 & 0 & 0 & 0 & 0 & 0 & 0 & -1 & 1 & 0 & 0 & 0 & 0 & 0 & 0 & 0 \\
 0 & 0 & 0 & 0 & 0 & 0 & 0 & 0 & 0 & 0 & 0 & 0 & 0 & 0 & 0 & 0 & 0 & 0 & 0 & 0 & 0 & 0 & -1 & 1 & 0 & 0 & 0 & 0 & 0 & 0 \\
 0 & 0 & 0 & 0 & 0 & 0 & 0 & 0 & 0 & 0 & 0 & 0 & 0 & 0 & 0 & 0 & 0 & 0 & 0 & 0 & 0 & 0 & 0 & -1 & 1 & 0 & 0 & 0 & 0 & 0 \\
 0 & 0 & 0 & 0 & 0 & 0 & 0 & 0 & 0 & 0 & 0 & 0 & 0 & 0 & 0 & 0 & 0 & 0 & 0 & 0 & 0 & 0 & 0 & 0 & 0 & -1 & 1 & 0 & 0 & 0 \\
 0 & 0 & 0 & 0 & 0 & 0 & 0 & 0 & 0 & 0 & 0 & 0 & 0 & 0 & 0 & 0 & 0 & 0 & 0 & 0 & 0 & 0 & 0 & 0 & 0 & 0 & -1 & 1 & 0 & 0 \\
 0 & 0 & 0 & 0 & 0 & 0 & 0 & 0 & 0 & 0 & 0 & 0 & 0 & 0 & 0 & 0 & 0 & 0 & 0 & 0 & 0 & 0 & 0 & 0 & 0 & 0 & 0 & -1 & 1 & 0 \\
 0 & 0 & 0 & 0 & 0 & 0 & 0 & 0 & 0 & 0 & 0 & 0 & 0 & 0 & 0 & 0 & 0 & 0 & 0 & 0 & 0 & 0 & 0 & 0 & 0 & 0 & 0 & 0 & -1 & 1
\end{array}
\right).$$
}

\begin{figure}[ht!]
\centering
\setlength{\unitlength}{0.5cm}
\begin{picture}(23,14)
\qbezier[20](11,0)(9,1)(11,2)

\put(11,0){\circle*{0.3}}\put(11.3,-0.2){\tiny{$P_0$}}
\put(11,0){\line(0,1){1}}

\put(11,1){\circle*{0.3}}\put(10.2,0.8){\tiny{$P_1$}}
\put(11,1){\line(0,1){1}}

\put(11,2){\circle*{0.3}}\put(11.1,1.9){\tiny{$P_2$}}
\put(11,2){\line(0,1){1}}

\put(11,3){\circle*{0.3}}\put(10.1,2.8){\tiny{$P_3$}}
\put(11,3){\line(-1,1){1}}

\put(10,4){\circle*{0.3}}\put(10.3,3.8){\tiny{$P_4$}}
\put(10,4){\line(-1,1){1}}

\put(9,5){\circle*{0.3}}\put(9.3,4.8){\tiny{$P_5$}}
\put(9,5){\line(-1,1){1}}

\put(8,6){\circle*{0.3}}\put(8.3,5.8){\tiny{$P_6$}}
\put(8,6){\line(-1,1){1}}

\put(7,7){\circle*{0.3}}\put(7.3,6.8){\tiny{$P_7$}}
\put(7,7){\line(-1,1){1}}

\put(6,8){\circle*{0.3}}\put(6.3,7.8){\tiny{$P_8$}}
\put(6,8){\line(-1,1){1}}

\put(5,9){\circle*{0.3}}\put(5.3,8.8){\tiny{$P_9$}}
\put(5,9){\line(-1,1){1}}

\put(4,10){\circle*{0.3}}\put(4.3,9.8){\tiny{$P_{10}$}}
\put(4,10){\line(-1,1){1}}

\put(3,11){\circle*{0.3}}\put(3.3,10.8){\tiny{$P_{11}$}}
\put(3,11){\line(-1,1){1}}

\put(2,12){\circle*{0.3}}\put(2.3,11.8){\tiny{$P_{12}$}}
\put(2,12){\line(-1,1){1}}

\put(1,13){\circle*{0.3}}\put(1.3,12.8){\tiny{$P_{13}$}}


\put(11,1){\line(1,1){1}}

\put(12,2){\circle*{0.3}}\put(12.2,1.8){\tiny{$P_{24}$}}
\put(12,2){\line(1,1){1}}

\put(13,3){\circle*{0.3}}\put(13.2,2.8){\tiny{$P_{25}$}}
\put(13,3){\line(1,1){1}}

\put(14,4){\circle*{0.3}}\put(14.2,3.8){\tiny{$P_{26}$}}
\put(14,4){\line(1,1){1}}

\put(15,5){\circle*{0.3}}\put(15.2,4.8){\tiny{$P_{27}$}}
\put(15,5){\line(1,1){1}}

\put(16,6){\circle*{0.3}}\put(16.2,5.8){\tiny{$P_{28}$}}


\put(11,3){\line(1,1){1}}

\put(12,4){\circle*{0.3}}\put(12.2,3.8){\tiny{$P_{14}$}}
\put(12,4){\line(1,1){1}}

\put(13,5){\circle*{0.3}}\put(13.2,4.8){\tiny{$P_{15}$}}
\put(13,5){\line(1,1){1}}

\put(14,6){\circle*{0.3}}\put(14.2,5.8){\tiny{$P_{16}$}}
\put(14,6){\line(1,1){1}}

\put(15,7){\circle*{0.3}}\put(15.2,6.8){\tiny{$P_{17}$}}
\put(15,7){\line(1,1){1}}

\put(16,8){\circle*{0.3}}\put(16.2,7.8){\tiny{$P_{18}$}}
\put(16,8){\line(1,1){1}}

\put(17,9){\circle*{0.3}}\put(17.2,8.8){\tiny{$P_{19}$}}
\put(17,9){\line(1,1){1}}

\put(18,10){\circle*{0.3}}\put(18.2,9.8){\tiny{$P_{20}$}}
\put(18,10){\line(1,1){1}}

\put(19,11){\circle*{0.3}}\put(19.2,10.8){\tiny{$P_{21}$}}
\put(19,11){\line(1,1){1}}

\put(20,12){\circle*{0.3}}\put(20.2,11.8){\tiny{$P_{22}$}}
\put(20,12){\line(1,1){1}}

\put(21,13){\circle*{0.3}}\put(21.2,12.8){\tiny{$P_{23}$}}
\end{picture}

\caption{Proximity graph of $\DX$.}
\label{F2}
\end{figure}

The set $S=\{\mathbf{c}_1,\mathbf{c}_2,\mathbf{c}_3\}\cup \{\mathbf{e}_Q\}_{Q\in \mathcal{N}(\mathcal{X})}$ is linearly independent and the orthogonal complement, with respect to the bilinear pairing $\langle \cdot, \cdot\rangle$, of the linear space that $S$ spans  is generated by
$$ \mathbf{R}=(10; 6,4,2,2,1,1,1,1,1,1,1,1,1,1,1,1,1,1,1,1,1,1,1,1,2,2,2,2,2).$$

Applying Corollary \ref{poin}, if  $\mathbf{X}$ has a WAI polynomial first integral, then its degree (that of a minimal one) is $10$.  Moreover the linear system $\mathcal{L}(\mathbf{c}_1)$  (respectively, $\mathcal{L}(\mathbf{c}_2)$,  $\mathcal{L}(\mathbf{c}_3)$)  has a unique curve (that is generic for the linear system): that defined by the equation $X^3-X^2Z+YZ^2=0$ (respectively, $  X^3+YZ^2=0$, $ X^2+YZ=0$).

Is is straightforward to check that $\mathbf{R}$ satisfies the two equalities above equation \eqref{igual} in Item (f) of Theorem \ref{char} and moreover that
$$ \mathbf{R} = \mathbf{c}_1+\mathbf{c}_2+2\mathbf{c}_3.$$
Therefore, items (d)-(f) of Theorem \ref{char} are satisfied.

The polynomial $K$ in  Step (4) of Algorithm \ref{alg1} is $K=(y-x^2+x^3)(y+x^3)(x^2+y)^2$. It is straightforward to check that this polynomial is a WAI  minimal  first integral of $\mathbf{X}$.
\end{exa}

\subsection{A classical alternative to Step (4) of Algorithm \ref{alg1}}

As mentioned in the introduction of this paper, Darboux proved in \cite{dar} that if a polynomial vector field $\mathbf{X}$ (of degree $d$) has at least $\binom{d + 1}2+1$ invariant algebraic curves, then it has a (Darboux) first integral, which can be computed using these invariant algebraic curves. In addition Jouanolou proved in \cite{jou} that if that number is at least $\binom{d + 1}2 + 2$, then the system has a rational first integral. These results were improved in \cite{CL2} (see also \cite{Ch,CLS,Che}). Next we state  Darboux and Jouanolou results adapted to our purposes.

\begin{teo}\label{tDar}
Suppose that a polynomial system $\mathbf{X}$ as in \eqref{e1} of degree $d$ admits $r$ irreducible invariant algebraic curves $f_i(x,y)=0$ with respective cofactor $k_i(x,y)$, $1 \leq i \le r$. Then:
\begin{enumerate}
\item[{\rm(a)}] There exist $\lambda_i\in \mathbb{C}$, not all zero, such that
    \begin{equation}
    \label{gege}
    \mathop{\sum}\limits_{i=1}^r\lambda_i k_i(x,y)=0
   \end{equation}
     if and only if the function
\begin{equation}\label{idar}
H=f_1^{\la_1}\cdots f_p^{\la_r}
\end{equation}
is a first integral of the system $\mathbf{X}$.

\item[\rm(b)] If $r=\binom {d+1}2+1$, then there exist $\lambda_i\in\C$, not all zero, such that $\mathop{\sum}\limits_{i=1}^r\lambda_i k_i(x,y)=0$.

\item[\rm(c)] If $r \geq \binom {d+1}2+2$, then $\mathbf{X}$ has a rational first integral.
\end{enumerate}
\end{teo}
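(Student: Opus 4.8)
The plan is to prove the three parts in order, reducing (a) to a logarithmic derivative identity, (b) to a dimension count on the space of cofactors, and (c) to the construction of a rational first integral out of two independent Darboux relations. For part (a) I would compute directly: writing $H=f_1^{\lambda_1}\cdots f_r^{\lambda_r}$ and using the defining relations $\mathbf{X}f_i=k_i f_i$, one obtains
$$\frac{\mathbf{X}H}{H}=\sum_{i=1}^r\lambda_i\frac{\mathbf{X}f_i}{f_i}=\sum_{i=1}^r\lambda_i k_i,$$
so that $\mathbf{X}H=H\left(\sum_{i=1}^r\lambda_i k_i\right)$. Since $H\not\equiv 0$, the right-hand side vanishes identically if and only if $\sum_{i=1}^r\lambda_i k_i=0$, which is exactly the claimed equivalence (the hypothesis that not all $\lambda_i$ vanish being what makes $H$ non-constant). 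For part (b), the essential remark is that every cofactor $k_i$ has degree at most $d-1$, hence lies in the space $\mathbb{C}[x,y]_{\le d-1}$ of polynomials of degree at most $d-1$, whose dimension is $\binom{d+1}{2}$. Thus $r=\binom{d+1}{2}+1$ such polynomials are forced to be linearly dependent, producing $\lambda_i$ not all zero with $\sum_i\lambda_i k_i=0$.

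For part (c), let $V=\{(\lambda_i)\in\mathbb{C}^r:\sum_i\lambda_i k_i=0\}$. Since $r\ge\binom{d+1}{2}+2$ and the $k_i$ span a space of dimension at most $\binom{d+1}{2}$, the same count gives $\dim V\ge 2$. The idea is to transfer attention from the (generally multivalued) Darboux first integrals to genuinely rational $1$-forms. To each $\boldsymbol{\lambda}\in V$ I associate the closed logarithmic $1$-form $\eta_{\boldsymbol{\lambda}}:=\sum_i\lambda_i\,df_i/f_i$; by part (a) it is $d\log$ of a first integral $H_{\boldsymbol{\lambda}}$, so $dH_{\boldsymbol{\lambda}}\wedge\omega=0$ for the foliation form $\omega:=p\,dy-q\,dx$, whence $\eta_{\boldsymbol{\lambda}}=R_{\boldsymbol{\lambda}}\,\omega$ for a rational function $R_{\boldsymbol{\lambda}}$ that depends linearly on $\boldsymbol{\lambda}$. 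The assignment $\boldsymbol{\lambda}\mapsto\eta_{\boldsymbol{\lambda}}$ is injective: the residue of $\eta_{\boldsymbol{\lambda}}$ along the distinct irreducible divisor $f_j=0$ equals $\lambda_j$, so $\eta_{\boldsymbol{\lambda}}\equiv 0$ forces every $\lambda_j=0$ (and consequently $R_{\boldsymbol{\lambda}}\neq 0$ for $\boldsymbol{\lambda}\neq 0$). Picking two linearly independent $\boldsymbol{\lambda},\boldsymbol{\mu}\in V$ and using $d\eta_{\boldsymbol{\lambda}}=d\eta_{\boldsymbol{\mu}}=0$, I would expand $0=dR_{\boldsymbol{\lambda}}\wedge\omega+R_{\boldsymbol{\lambda}}\,d\omega$ together with the analogous identity for $\boldsymbol{\mu}$, eliminate $d\omega$, and arrive at $(R_{\boldsymbol{\mu}}\,dR_{\boldsymbol{\lambda}}-R_{\boldsymbol{\lambda}}\,dR_{\boldsymbol{\mu}})\wedge\omega=0$, that is $d(R_{\boldsymbol{\lambda}}/R_{\boldsymbol{\mu}})\wedge\omega=0$. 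Hence the rational function $R_{\boldsymbol{\lambda}}/R_{\boldsymbol{\mu}}$ is a first integral of $\mathbf{X}$, and it is non-constant because constancy would force $\boldsymbol{\lambda}$ and $\boldsymbol{\mu}$ to be proportional, contradicting injectivity and independence.

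The main obstacle is precisely this last step in (c): manufacturing a single-valued rational first integral out of the two Darboux first integrals $H_{\boldsymbol{\lambda}},H_{\boldsymbol{\mu}}$, whose complex exponents make them multivalued and not themselves rational. The decisive move is to work not with the $H$'s but with the rational $1$-forms $\eta_{\boldsymbol{\lambda}}$ and their proportionality factors $R_{\boldsymbol{\lambda}}$ relative to the fixed form $\omega$. Closedness of $\eta_{\boldsymbol{\lambda}}$ is exactly what forces the ratio $R_{\boldsymbol{\lambda}}/R_{\boldsymbol{\mu}}$ to be constant along the leaves of the foliation, while the residue computation along the irreducible curves $f_j=0$ guarantees that this rational function is genuinely non-constant; these two ingredients together are what upgrade the Darboux conclusion of (a)--(b) to the rational first integral asserted in (c).
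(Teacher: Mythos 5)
Your proof is correct. Note, however, that the paper does not prove this theorem at all: it is quoted as a classical result of Darboux and Jouanolou (with the references \cite{dar}, \cite{jou}, \cite{CL2}), so there is no in-paper argument to compare against. Your write-up is essentially the standard proof from those sources: part (a) by the logarithmic-derivative identity $\mathbf{X}H=H\sum_i\lambda_ik_i$, part (b) by counting dimensions in $\mathbb{C}[x,y]_{\le d-1}$, and part (c) by extracting two independent closed logarithmic $1$-forms $\eta_{\boldsymbol\lambda}=R_{\boldsymbol\lambda}\,\omega$, $\eta_{\boldsymbol\mu}=R_{\boldsymbol\mu}\,\omega$ and showing that the single-valued ratio $R_{\boldsymbol\lambda}/R_{\boldsymbol\mu}$ is a non-constant rational first integral; the residue computation along the distinct irreducible curves $f_j=0$ correctly supplies both the injectivity of $\boldsymbol\lambda\mapsto\eta_{\boldsymbol\lambda}$ and the non-constancy at the end, so all the steps close up.
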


Finding invariant algebraic curves is an important tool in the study of Darboux integrability and a very hard problem. Steps (1)-(3) of Algorithm \ref{alg1} provide $r$ candidates to be invariant curves of $\mathbf{X}$  given by equations $f_i=0$, $1 \leq i \leq r$. Thus, these curves are candidates to determine a Darboux first integral \eqref{idar}. After computing their cofactors
\begin{equation}
\label{cofac}
k_i(x,y)=\frac{P\pd {f_i}x+Q\pd{f_i}y}{f_i},
\end{equation}
we can  check whether there exist values $\lambda_i\in\N\cup\{0\}$ satisfying Equality (\ref{gege}), since we only need to solve a homogeneous linear system of equations. We notice that this linear system has  $\binom {d+1}2$ equations, corresponding with the number of monomials of a polynomial of degree $d-1$ in two variables, and $r$ unknowns, say the $\lambda_i$. If such values $\lambda_i$ exist, then we have succeeded and \eqref{idar} is a first integral of the system $\mathbf{X}$. Otherwise the first integral we are looking for does not exist.

As a consequence, we have designed an alternative algorithm to Algorithm \ref{alg1}. It has the same input an output and the same steps (1)-(3).

\begin{alg}
\label{alg2}
{\rm

\begin{itemize} $\;$

\item Input, Output and Steps (1)-(3) as in Algorithm \ref{alg1}.

\end{itemize}

\begin{enumerate}

 \item[{\rm(4)}] Compute the cofactors $k_i(x,y)$ corresponding to the curves $f_i=0$, $1 \leq i \leq r$, as in (\ref{cofac}).

 \item[{\rm(5)}] Solve the homogeneous complex linear system of equations $\mathop{\sum}\limits_{i=1}^r\lambda_i k_i(x,y)=0$, where the unknowns are $\lambda_i$, $1 \leq i \leq r$. If it has a solution $\lambda_i = n_i \in \mathbb{N}$, $\gcd(n_1,n_2, \ldots,n_r)=1$, then return $K= \prod_{i=1}^{r} f_i^{n_i}$. Otherwise return $0$.

\end{enumerate}
}
\end{alg}

\begin{exa}
{\rm
Consider the vector field $\mathbf{X}$ in Example \ref{elcuatro} and the polynomial invariant curves $f_1 = y-x^2+x^3$, $f_2=y+x^3$ and $f_3=x^2+y$ there computed by using steps (1)-(3) of Algorithm \ref{alg1}. Now Step (4) of Algorithm \ref{alg2} determines the cofactors: $k_1= 2x(-x^2-4x^3+3x^4-5y+3xy) $, $k_2= 2x(3x^2-5x^3+3x^4-y+3xy)$ and $k_3=x(-2x^2+9x^3-6x^4+6y-6xy) $. Finally, solving the linear system in Step (5) of Algorithm \ref{alg2}, we get $n_1=n_2=1$ and $n_3=2$,  which are coprime and provide a minimal WAI polynomial first integral of $\mathbf{X}$.
}
\end{exa}

\end{document}